\newtheorem{theorem}{Theorem}[section]
\newtheorem{proposition}[theorem]{Proposition}
\theoremstyle{definition}
\theoremstyle{remark}
\numberwithin{equation}{section}
\date{}
\begin{document}
\title{ \bf\large{Double Hopf bifurcation analysis in the  memory-based diffusion system}\footnote{Partially supported by the National Natural Science Foundation of  China (Nos.11971143, 12071105), and  Natural Science Foundation of Zhejiang Province of China  (No.LY19A010010).}}
\author{Yongli Song\textsuperscript{1}, \  \ Yahong Peng\textsuperscript{2}\footnote{Corresponding author, Email: pengyahong@dhu.edu.cn },  \  \ Tonghua Zhang\textsuperscript{3}  \\
{\small \textsuperscript{1} School of Mathematics, Hangzhou Normal University, Hangzhou 311121, China\hfill {\ }}\\
{\small \textsuperscript{2} Department of Mathematics, Donghua University, Shanghai,
201620, China\hfill{\ }}\\
{\small \textsuperscript{3} Department of Mathematics, Swinburne University of Technology, Hawthorn, VIC3122,  Australia\hfill {\ }}\\
}
\maketitle
\begin{abstract}
In this paper,  we derive the algorithm for calculating the normal form of the double Hopf bifurcation that appears in a memory-based diffusion system via taking memory-based diffusion coefficient and the memory delay as the perturbation parameters. Using the obtained theoretical results, we study  the dynamical classification near the double Hopf bifurcation point  in a predator-prey system with Holling type II functional response. We show the existence of different kinds of stable spatially inhomogeneous periodic solutions, the transition from one kind to the other as well as the coexistence of two types of periodic solutions  with different spatial profiles by varying the memory-based diffusion coefficient and the memory delay.
\end{abstract}

\noindent
{\bf Keywords:}  Memory-based diffusion;  delay;   stability; double Hopf bifurcation; normal form \\
\noindent {\bf MSC2010:}  35B10;  35B32; 35K57; 37G05


\begin{section}
{Introduction}
 \end{section}

 The  complex internal mechanism of memory-driven movement is still poorly understood although it has been known  by many biologists that the spatial  memory has an important influence on the animal movement,  and  results in  complex mathematical and
computational challenges \cite{FaganLewis-13EL,Fagan-PNAS2019}. In this regard, mathematical models may provide deep insights into the theoretical mechanism behind the biological phenomenon.  For example, considering the spatial memories decay over time and the fact that animal movement is affected by the population at the past  time \cite{FaganLewis-13EL},  Shi {\it et al.} \cite{Shi-WWY-JDDE2020} introduce a time delay (also known as “memory delay”)  into the advection term of the classic reaction-diffusion-advection equation and  propose  a memory-based diffusion equation to model the dynamics of animal movement  with memory.   From the theoretical analysis in \cite{Shi-WWY-JDDE2020} authors found that the stability of a spatially homogeneous steady state depends on the reaction term and the relationship of the coefficients  of random diffusion and the directional diffusion,  but not the memory delay. Since then many researchers have shown their interests in modeling and investigating the dynamics  for the single-species model with the memory  \cite{Shi-WW-Nonlinearity2019,Song-Wu-Wang-JDE2019,An-Wang-Wang-DCDS2020,Wang-FW-JDDE2021,Shi-Shi-Wang-JMB2021,Song-WuWang-AMC2021,Oliveira-Kar-Ber-MB2020}.
    
 More recently,  Song {\it et al.} \cite{Song-SW-SAPM2021}  consider a two species model, where authors assume that the prey, such as plants or ``drunk" animals are considered as resource so that they have no or negligible memory or cognition. Thus they only introduce a spatial memory into the predator, and propose the following predator-prey system 
\begin{equation}
\label{PPMODEL}
  \begin{cases}\frac{\partial u(x,t)}{\partial t}=d_{11}u_{xx}(x,t)+f(u(x,t),v(x,t)), ~0<x<\ell\pi, t>0, \vspace{0.1cm}\\
\frac{\partial v(x,t)}{\partial t}=d_{22}v_{xx}(x,t)-d_{21}(v(x,t)u_{x}(x,t-\tau))_x+g(u(x,t),v(x,t)), ~0<x<\ell\pi, t>0,\vspace{0.1cm}\\
u_x(0,t)=u_x(\ell\pi,t)=v_x(0,t)=v_x(\ell\pi,t)=0, ~t\geq0,
  \end{cases}
 \end{equation}
where $u(x, t)$ and $v(x, t)$ are the density of the prey and predator, respectively, at the space $x$ and time $t$,  
 $d_{11}$ and $d_{22}$ are the random diffusion coefficients of the prey and predator respectively,   $d_{21}$ is the memory-based diffusion coefficient of the predator,  $\tau$ is the time delay representing the averaged memory period of  the predator, $f$ and $g$ are biological birth/death of prey and predator respectively.  The effects of  the memory-based diffusion coefficient $d_{21}$  and  time delay  $\tau$  on the stability of the positive constant equilibrium of system \eqref{PPMODEL}  has been investigated in \cite{Song-SW-SAPM2021} and it has been shown that unlike the classic prey-taxis model, memory-based prey-taxis destabilizes  the positive constant equilibrium, which is a new mechanism for spatiotemporal pattern formation.   The spatially inhomogeneous Hopf bifurcation, double Hopf bifurcation and stability
switches are also found in  \cite{Song-SW-SAPM2021}.  The algorithm for computing the normal form to investigate the spatially inhomogeneous Hopf bifurcation are developed in  \cite{Song-PZ-JDE2021}.  

In this paper, we are interested in the complex dynamics due to the interaction of two spatially inhomogeneous Hopf bifurcations.  For this purpose, we shall first develop the algorithm for computing the normal form of double Hopf bifurcation for system \eqref{PPMODEL}. Recall that the standard Hopf bifurcation happens when the equilibrium loses its stability with a pair of purely imaginary eigenvalues at the bifurcation value.  For this Hopf bifurcation,  there exists a family of the periodic solutions with small amplitudes  near the neighbourhood of the equilibrium when the bifurcation parameter is taken in the  unilateral neighbourhood of the bifurcation value. However,  the interaction of  Hopf bifurcations may result in more complex dynamics like quasi-periodic solution or invariant torus \cite{KUZ-BOOK1998}.  The  interaction  of two Hopf bifurcations, which have a pair of purely
imaginary eigenvalues $\pm i \omega_1$ and $\pm i\omega_2$, $\omega_j>0, j=1, 2$, respectively, is interpreted in the framework of the double Hopf bifurcation also known as Hopf-Hopf bifurcation.   When the ratio of  $\omega_1/\omega_2$  is irrational, the bifurcation is said to be non-resonant, otherwise resonant.  The resonant double Hopf bifurcation is distinguished into two cases:  weakly and strongly resonant double Hopf bifurcations.  The bifurcation is said to be strongly  resonant if there exit  two positive integers $m_1,  m_2$ so that
\begin{equation}
\label{CforResonant}
m_1\omega_1  = m_2\omega_2,  ~m_1+m_2\leq 4,
\end{equation}
and to be weakly resonant if  there are no $m_1, m_2$ satisfying $m_1+m_2\leq 4$ such that the condition \eqref{CforResonant} holds.   The weakly resonant double Hopf bifurcation is often codimension-two, but the strongly resonant  case is often codimension-three and it is  more difficult to analyze the related dynamics \cite{Luongo-AD-ND2003,Luongo-DA-CS2004}.  

 The two most popular approaches used to investigate the bifurcations are the rigorous centre manifold reduction and normal form theory  \cite{HALE-1977BOOK,Hassard-book1981,Chow-Hale-book82,KUZ-BOOK1998,Faria-95JDE,Faria-00TAMS,Yu-DJ-IJBC2014} and the method of multiple scales  \cite{Nayfeh-book1991,Luongo-AD-ND2003,Luongo-DA-CS2004,Nayfeh-ND2008}.  For other  methods to investigate the bifurcations such as the method of small parameters or the theory of averaging, please refer to \cite{Hale-Lunel-JIEA1990,San-book2007}  and references therein. The complex dynamics arising form  double Hopf bifurcation has  been  recently studied by many authors   for various dynamical systems,  refering to  \cite{Kno-P-PRSLA1988,Van-KL-Nonlinearity1990,Li-JDE2016, Rev-AM-SIADS2015,Zhang-ZS-CNSNS2021} for ordinary differential equations, 
 to \cite{Cam-Le-JDDE1995, Ding-JiangYu-AMC2013,Ding-CaoJiang-ND2016,Wang-XS-IJBC2013,Jiang-Song-15AMC,Mon-DIS-PRSCA2017,Gen-IM-ND2018,Pei-Wang-AMC2019,Bos-JK-SIADS2020,Wang-LW-MCS2021} for delay differential equations. More recently,  based on the theory of  normal forms for partial functional differential equations  developed by Faria \cite{Faria-00TAMS}, the double Hopf bifurcation in the reaction-diffusion system with delay has attracted the attention of the researchers \cite{Du-Niu-Guo-Wei-JDDE2020,Duan-NiuWei-CSF2019,Chen-WLC-AMM2020,Liu-Wei-NAMC2021}. The solutions of these systems involve not only the time but also the space, thus the investigation of the bifurcation phenomenon is more difficult \cite{zhang2018a,peng2021a}. The idea in  \cite{Faria-00TAMS} has also been successfully used to calculate the normal form of the Turing-Hopf bifurcation in the reaction-diffusion system with/without delay \cite{Song-ZP-16CNSNS,Song-Jiang-Yuan-JAAC2019,Jiang-An-Shi-JDE2019}. Unfortunately,  the procedure of calculating the normal forms of the double Hopf bifurcation for the classical reaction-diffusion system with delay can not apply to the system \eqref{PPMODEL}, where the delay is involved the diffusion term not in the reaction term and the diffusion terms are not linear.


Motivated by the recent results, particularly the aforementioned, on the double Hopf bifurcation for the reaction-diffusion systems with delays,  in this paper, we investigate the dynamics associated with the double Hopf bifurcation arising from \eqref{PPMODEL} and our results can be summarized as follows: 
\begin{enumerate}
\item We derive the algorithm for computing the normal form of the double Hopf bifurcation induced by the memory-based diffusion coefficient and memory delay  for the memory-based diffusion system \eqref{PPMODEL}.  The explicit relationship between the second and third terms of the normal form and those in \eqref{PPMODEL} near the positive equilibrium are completely established; 
 \item  For system \eqref{PPMODEL} with Holling-type II functional response,  the dynamical classification near the double Hopf bifurcation point are determined for two cases: (i) the interaction of two Hopf bifurcations with the same spatial mode-$2$; and (ii)  the interaction of two Hopf bifurcations with the  spatial mode-$1$ and mode-$2$;
 \item We find different kinds of stable spatially inhomogeneous periodic solutions and the transition from one to another near the neighbourhood of the double Hopf bifurcation point. Especially, for case 2(i), we find the  stable quasi-periodic solution  like a “bird” with the spatial mode-1, and for case 2(ii)  we find the bistability region of two kinds of stable periodic solutions with spatial mode-$1$ and mode-$2$. 
 \end{enumerate}


The the rest of the paper is organized as follows. In Section 2, we derive the algorithm for computing the normal form associated with the double Hopf bifurcation for  \eqref{PPMODEL}. In Section 3, we study the dynamical classification near the double Hopf bifurcation point for  \eqref{PPMODEL}  with  Holling type-II functional response by employing the theoretical results developed in Section 2.  We conclude our study with a short discussion in Section 4. Finally, we give all detailed calculations used in Section 2 in the Appendices. Throughout the paper, $\mathbb{N}$ represents the set of all positive integers, and $\mathbb{N}_0=\mathbb{N}\cup {0}$ represents the set of all nonnegative integers.

\begin{section}
   {Normal forms  for the double Hopf bifurcation and Hopf bifurcation}
 \end{section}
\begin{subsection}
   {Basic assumptions}
 \end{subsection}
    We   assume that system \eqref{PPMODEL} has a   positive constant equilibrium $E_*(u_*, v_*)$ and $f, g\in \mathbb{C}^3$  near the neighbourhood of $E^*$  for the calculation of the normal form. Then the characteristic equation of  the  linearized system of  \eqref{PPMODEL} at the positive equilibrium  $E_*(u_*, v_*)$  is
\begin{equation}
\label{CE0}
\prod\limits_{n\in \mathbb{N}_0} \Gamma_n(\lambda)=0,
\end{equation}
where $\Gamma_n(\lambda)=\mathrm{det}\left(\mathcal{M}_n(\lambda)\right)$ with characteristic matrix
\begin{equation}
\label{CEN}
\mathcal{M}_n(\lambda)=\lambda I_2+(n/\ell)^2D_1+(n/\ell)^2e^{-\lambda\tau}D_2-A,
\end{equation}
 where $I_2$ is a $2\times 2$ identity matrix, 
 \begin{equation}
\label{DJA}
D_1=\left(\begin{array}{cc}
d_{11} & 0\vspace{0.1cm}\\
0 & d_{22}
\end{array}\right),~D_2=\left(\begin{array}{cc}
0 & 0\vspace{0.1cm}\\
-d_{21}v_*    & 0
\end{array}\right),  ~A=(a_{ij})_{2\times 2},
\end{equation}
with
 $$
a_{11}=\frac{\partial f(u_*, v_*)}{\partial u }, a_{12}=\frac{\partial f(u_*, v_*)}{\partial v }, a_{21}=\frac{\partial g(u_*, v_*)}{\partial u }, a_{22}=\frac{\partial g(u_*, v_*)}{\partial v }
$$

  From \eqref{CEN}, we have
\begin{equation}
\label{CE}
\Gamma_n(\lambda)=\lambda ^2-T_n\lambda+\widetilde{J}_n(\tau)=0,
\end{equation}
where
\begin{equation}
\label{TK}
T_n=Tr(A)-Tr(D_1)(n/\ell)^2,
\end{equation}
$$
\widetilde{J}_n(\tau)=d_{11}d_{22}(n/\ell)^4-\left(d_{11}a_{22}+d_{22}a_{11}+d_{21}v_*a_{12}e^{-\lambda\tau}\right)(n/\ell)^2+Det(A).$$

Assume that  at $(\tau, d_{21})=(\tau_c, d_{21}^c)$, Eq.\eqref{CE0} has two pairs of purely imaginary  roots $\pm i\omega_1$ and $\pm i\omega_2$, respectively, for $n=n_1$ and $n=n_2$ with $n_1\leq n_2$ and  the corresponding transversality condition holds,   and  all other eigenvalues  have negative real parts. We are interested in the cases of weak resonance  and   non-resonance, which are codimension-two bifurcation problem, i.e. the strong resonance condition  \eqref{CforResonant}  doses not hold for these two $\omega_1$ and $\omega_2$.  
 
 \vspace{0.4cm}
 \begin{subsection}
{Normal form for the double Hopf bifurcation}  \label{Sec-NFDhopf}
\end{subsection}

  In what follows, we set   $\tau=\tau_c+\mu_1,  d_{21}=d_{21}^c+\mu_2$ such that $\left(\mu_1, \mu_2\right)=(0, 0)$ is the double Hopf bifurcation value for Eq.\eqref{PPMODEL}. In this section, $D_1$ is defined by \eqref{DJA} and 
$$
D_2^c=\left(\begin{array}{cc}
0 & 0\vspace{0.1cm}\\
-d_{21}^cv_*    & 0
\end{array}\right).
$$

Define the real-valued Sobolev
space
\begin{equation*}
\mathscr{X}=\left\{U=\left(U_1, U_2\right)^T\in \left(W^{2,2}(0, \ell\pi)\right)^2,\frac{\partial U_1}{\partial
x}=0, \frac{\partial U_2}{\partial
x}=0,~ x=0, \ell\pi\right\},
\end{equation*}
and let  $\mathscr{C}:=C\left([-1,0]; \mathscr{X}\right)$ be the Banach space of continuous mappings from $[-1, 0]$ to $\mathscr{X}$ .

Translating $E_*$ to the origin by setting
$$U(x,t)=\left(U_1(x,t),  U_2(x,t)\right)^T=(u(x,t), v(x,t))^T-(u_*, v_*)^T,$$
normalizing the delay by the time-scaling $t\rightarrow t/\tau$, and then for simplification of notation,  writing $U(t)$ for $U(x, t)$ and $U_t\in \mathscr{C}$ for $U_t(\theta)=U(x, t+\theta), -1\leq \theta\leq 0$,  \eqref{PPMODEL} becomes
\begin{equation}
\label{NFEQ}
\frac{dU(t)}{dt}=d(\mu)\Delta (U_t)+L(\mu)(U_t)+F(U_t, \mu),
\end{equation}
where   for $\varphi=(\varphi^{(1)},\varphi^{(2)})^T\in \mathscr{C}$,  $d(\mu)\Delta, L(\mu): \mathscr{C}\rightarrow \mathscr{X}$, $F: \mathscr{C}\times \mathbb{R}^2\rightarrow \mathscr{X}$  are given, respectively, by
$$d(\mu)\Delta (\varphi)=d_0\Delta (\varphi)+F^d (\varphi, \mu), ~L(\mu) (\varphi)=(\tau_c+\mu_1) A\varphi(0),$$
 and
\begin{equation}
\label{FPM}
  F(\varphi, \mu) =(\tau_c+\mu_1) \left(\begin{array}{c}
f\left(\varphi^{(1)}(0)+u_*, \varphi^{(2)}0)+v_*\right)\vspace{0.2cm}\\
g\left(\varphi^{(1)}(0)+u_*, \varphi^{(2)}(0)+v_*\right)\end{array}\right)-L(\mu)(\varphi),
\end{equation}
where
 $$
 d_0\Delta (\varphi)=\tau_cD_1\varphi_{xx}(0)+\tau_cD_2^c\varphi_{xx}(-1),
$$
\begin{equation}
\label{FD}
\begin{array}{lll}
&&F^d (\varphi, \mu)\vspace{0.2cm}\\
&=&
\tau_c\left(\begin{array}{c}
0\vspace{0.2cm}\\
-d_{21}^c\left(\varphi^{(1)}_{x}(-1)\varphi^{(2)}_{x}(0)+\varphi^{(1)}_{xx}(-1)\varphi^{(2)}(0)\right)\end{array}\right)\vspace{0.3cm}\\
&&
+\mu_1\left(\begin{array}{c}
d_{11}\varphi^{(1)}_{xx}(0)\vspace{0.2cm}\\
-d_{21}^c v_*\varphi^{(1)}_{xx}(-1) +d_{22}\varphi^{(2)}_{xx}(0)\end{array}\right)
-\tau_c\mu_2\left(\begin{array}{c}
0\vspace{0.2cm}\\
v_*\varphi^{(1)}_{xx}(-1)\end{array}\right)
\vspace{0.3cm}\\
&&
-\left(d_{21}^c \mu_1+\tau_c\mu_2\right)\left(\begin{array}{c}
0\vspace{0.2cm}\\
\varphi^{(1)}_{x}(-1)\varphi^{(2)}_{x}(0)+\varphi^{(1)}_{xx}(-1)\varphi^{(2)}(0)\end{array}\right) \vspace{0.3cm}\\
&&-\mu_1\mu_2\left(\begin{array}{c}
0\vspace{0.2cm}\\
v_*\varphi^{(1)}_{xx}(-1)\end{array}\right)-\mu_1\mu_2\left(\begin{array}{c}
0\vspace{0.2cm}\\
\varphi^{(1)}_{x}(-1)\varphi^{(2)}_{x}(0)+\varphi^{(1)}_{xx}(-1)\varphi^{(2)}(0)\end{array}\right).
\end{array}
 \end{equation}

  Noticing that $\mu_1, \mu_2$ are perturbation parameters and treated as variables in the calculation of normal forms,  we denote
  $L_0(\varphi)=\tau_cA\varphi(0)$
 and rewrite  \eqref{NFEQ}   as  the following linear form from nonlinear terms
  \begin{equation}\label{NFEQ1}
\frac{dU(t)}{dt}=d_0\Delta (U_t)+L_0(U_t)+\widetilde{F}(U_{t},  \mu_1, \mu_2),
\end{equation}
where for $\varphi=(\varphi_{1},\varphi_{2})^T\in \mathscr{C}$,
\begin{equation}
\label{NLTERM}
\widetilde{F}(\varphi, \mu_1, \mu_2)=\mu_1A\varphi(0)+F(\varphi, \mu)+F^d(\varphi, \mu).
\end{equation}

The characteristic equation for the linearized system
 \begin{equation}\label{LNFEQ1}
\frac{dU(t)}{dt}=d_0\Delta (U_t)+L_0(U_t)
\end{equation}
is
\begin{equation}
\label{NFCE}
\prod\limits_{n\in \mathbb{N}_0} \widetilde{\Gamma}_n(\lambda)=0,
\end{equation}
where  $\widetilde{\Gamma}_n(\lambda)=\mathrm{det}\left(\widetilde{\mathcal{M}}_n(\lambda)\right)$ with
\begin{equation}
\label{NFMN}
\widetilde{\mathcal{M}}_n(\lambda)=\lambda I_2+\tau_c(n/\ell)^2D_1+\tau_c(n/\ell)^2e^{-\lambda}D_2^c-\tau_cA.
\end{equation}

Comparing   \eqref{NFMN} with \eqref{CEN}, we know that
Eq.\eqref{NFCE}  has two pairs of purely imaginary  roots
$\pm i\omega_{1c}$ and $\pm i\omega_{2c}$  for $n=n_1$ and $n=n_2$, respectively,  and  all other eigenvalues  have negative real parts, where
$\omega_{jc}=\tau_c\omega_{j}, j=1,2$.

 It is well known that the eigenvalue problem
 $$
-\gamma''=\mu \gamma,~~x\in(0, \ell\pi);~~\gamma'(0)=\gamma'(\ell\pi)=0
$$
\ has eigenvalues $\mu_{n}=(n/\ell)^{2}, n\in \mathbb{N}_0 $,
with corresponding normalized eigenfunctions
$$\gamma_n(x)=\frac{\cos\left(\frac{nx}{\ell}\right)}{\|\cos\left(\frac{nx}{\ell}\right)\|_{2,2}}=\left\{
\begin{array}{ll}
  \frac{1}{\sqrt{\ell \pi}}, & ~\mathrm{for}~n=0,\vspace{0.2cm} \\
  \frac{\sqrt{2}}{\sqrt{\ell \pi}}\cos\left(\frac{nx}{\ell}\right), & ~\mathrm{for}~n\neq 0,
\end{array}
\right.$$
where the norm $\|\cdot\|_{2,2}$ is induced by the inner product $\left[\cdot, \cdot\right]$ as follows
\begin{equation*}
\left[ u,v \right]=\int_0^{\ell\pi}u^T v
dx, ~\mbox{for}~  u, v \in \mathscr{X}.
\end{equation*}

Let $\beta_n^{(j)}= \gamma_n(x)e_j, j=1, 2$,
where $e_j$ is the unit coordinate vector of $\mathbb{R}^2$,
and $\mathscr{B}_n=\mbox{span}\left\{\left[v(\cdot), \beta_n^{(j)}\right]\beta_n^{(j)}|~ v\in \mathscr{C}, j=1, 2\right\}$. Then it is easy to verify that
$$L_0(\mathscr{B}_n)\subset \mbox{span}\left\{\beta_n^{(1)}, \beta_n^{(2)}\right\}, n\in \mathbb{N}_0.$$ 
Assume that $z_t(\theta)\in C=C\left([-1,0], \mathbb{R}^2\right)$ and
 $$z_t^T(\theta) \left(\begin{array}{c}
                             \beta_n^{(1)} \vspace{0.1cm}\\
                             \beta_n^{(2)}
                           \end{array}
\right)\in \mathscr{B}_n.$$
Then, on $\mathscr{B}_n$, the linearized equation  \eqref{LNFEQ1} is equivalent to the following  functional differential equation (FDE) in $C$:
\begin{equation}
\label{FDE}
\dot{z}(t)=L_0^d\left(z_t(\theta) \right)+ L_0(z_t(\theta)),
\end{equation}
where
$$
L_0^d\left(z_t(\theta) \right) =\tau_c\left(\begin{array}{cc}
                  -d_{11} (n/\ell)^2 & 0 \\
                   0 &   -d_{22}(n/\ell)^2
                 \end{array}
\right) z_t(0)+\tau_c\left(\begin{array}{cc}
                  0 & 0 \\
                d_{21}^cv_*(n/\ell)^2 &  0
                 \end{array}
\right) z_t(-1).
$$
The characteristic equation of  linear system  \eqref{FDE} is the same as  given in  \eqref{NFCE}.

Define $\eta_n(\theta)\in BV([-1, 0], \mathbb{R}^2)$
such that
$$
\int^0_{-1}d\eta_n(\theta) \varphi(\theta)=L_0^d(\varphi(\theta)) + L_0(\varphi(\theta)), ~~\varphi\in C,
 $$
and use the adjoint bilinear form on $C^*\times C$, $C^*=C([0, 1], \mathbb{R}^{2*})$, where $\mathbb{R}^{2*}$ is the 2-dimensional
space of row vectors,   as follows
$$
\langle\psi(s),
\varphi(\theta)\rangle_n=\psi(0)\varphi(0)-\int^0_{-1}\int^{\theta}_0
\psi(\xi-\theta)d\eta_n(\theta)\varphi(\xi)d\xi,\ \ \
\textmd{for}\ \psi\in C^*, \varphi\in C.
$$

Let
$\Lambda=\{i\omega_{1c}, -i\omega_{1c},i\omega_{2c}, -i\omega_{2c}\}$. Denote the generalized eigenspace of \eqref{FDE} associated with $\Lambda$  by $P_{n_j}$ and  the corresponding adjoint space by
$P_{n_j}^*$. Then, by the adjoint theory of functional differential equation \cite{HALE-1977BOOK}, $C$ can be decomposed as
$C=P_{n_j}\oplus Q_{n_j}$, $j=1,2,$ where $Q_{n_j}=\{\varphi\in C:
\langle\psi,\varphi\rangle=0,\forall\psi\in P_{n_j}^*\}$. Choose the
bases $\Phi_{n_j}(\theta)$ and  $\Psi_{n_j}(s)$ of $P_{n_j}$ and $P_{n_j}^*$, respectively, as follows
$$\Phi_{n_j}(\theta)=\left(\phi_{n_j}(\theta), \overline{\phi} _{n_j}(\theta)\right),
 ~\Psi_{n_j}(s)=\mbox{col}\left(\psi_{n_j}^T(s), \overline{\psi }_{n_j}^T(s)\right),$$
such that $\langle\Psi_{n_j},\Phi_{n_j}\rangle_{n_j}=I_2,$
where
\begin{equation*}
\phi_{n_j}(\theta)=
\left(
\begin{array}{c}
\phi^{(1)}_{n_j}(\theta)\vspace{0.2cm}\\
 \phi^{(2)}_{n_j}(\theta)
\end{array}
\right)=\phi_{n_j}(0)e^{i\omega_{jc}\theta},~~\psi_{n_j}(s)=
\left(
\begin{array}{c}
\psi^{(1)}_{n_j}(s)\vspace{0.2cm}\\
 \psi^{(2)}_{n_j}(s)
\end{array}
\right)=\psi_{n_j}(0)e^{-i\omega_{jc}s},
\end{equation*}
and
\begin{equation*}
\phi_{n_j}(0)=\left(\begin{array}{c} 1 \vspace{0.2cm}\\
\frac{i\omega_j+(n_j/\ell)^2d_{11}-a_{11}}{a_{12}}
\end{array}\right),
~~
\psi_{n_j}(0)=\alpha_j\left(\begin{array}{c} 1 \vspace{0.2cm}\\
\frac{a_{12}}{i\omega_j+(n_j/\ell)^2d_{22}-a_{22}}
\end{array}\right),
\end{equation*}
with
$$\alpha_j=\frac{i\omega_j+(n_j/\ell)^2d_{22}-a_{22}}{i\omega_j+(n_j/\ell)^2d_{11}-a_{11}+i\omega_j+(n_j/\ell)^2d_{22}-a_{22}+\tau_ca_{12}d_{21}^cv_*(n_j/\ell)^2e^{-i\omega_{jc}}}.$$

Using the decomposition $C=P_{n_j}\oplus Q_{n_j}$,  the phase space $\mathscr{C}$ for \eqref{NFEQ}  can be decomposed as
$$ \mathscr{C}=\mathcal{P} \oplus \mathcal{Q}, ~\mathcal{P}=\mbox{Im} \pi, ~\mathcal{Q}=\mbox{Ker} \pi, $$
 where $\pi:  \mathscr{C}\rightarrow \mathcal{P}$ is the  projection operator defined by
$$
\begin{array}{lll}
  \pi (\phi) 
&=&
\Phi_{n_1}(\theta) \left \langle  \Psi_{n_1}(\theta), \left(\begin{array}{c}\left[\phi(\cdot), \beta_{n_1}^{(1)}\right]\vspace{0.2cm}\\
\left[\phi(\cdot),   \beta_{n_1}^{(2)}\right]  
\end{array}\right)  \right\rangle _{n_1}  \gamma_{n_1}(x)\vspace{0.2cm}\\
&&
+\Phi_{n_2}(\theta)   \left \langle \Psi_{n_2}(\theta), \left(\begin{array}{c}\left[\phi(\cdot), \beta_{n_2}^{(1)}\right]\vspace{0.2cm}\\
\left[\phi(\cdot), \beta_{n_2}^{(2)}\right]
\end{array}\right) \right\rangle _{n_2}   \gamma_{n_2}(x).
\end{array}
$$
In the following, for simplification of notations, we use $\phi_{n_j},  \overline{\phi}_{n_j},  \psi_{n_j}$ and  $\overline{\psi}_{n_j}$ for $\phi_{n_j}(\theta),  \overline{\phi}_{n_j}(\theta),  \psi_{n_j}(\theta)$ and $\overline{\psi}_{n_j}(\theta)$, respectively.
In addition, notice that for $n_1=n_2$, $\phi_{n_1}(\theta)$ looks like $\phi_{n_2}(\theta)$ but they are actually different since $\omega_{1c}\neq \omega_{2c}$.
Thus, in this case, one can not replace $\phi_{n_1}(\theta)$ by $\phi_{n_2}(\theta)$ or conversely.

Following \cite{Faria-00TAMS}  and \cite{Song-ZP-16CNSNS}, we
define $\mathscr{C}_0^1=\left\{ \phi\in \mathscr{C}: \dot{\phi}\in \mathscr{C}, \phi (0)\in \mbox{dom}(d\Delta)\right\}$ and let
$$z_x=\left(z_1(t)\gamma_{n_1}(x), z_2(t)\gamma_{n_1}(x), z_3(t)\gamma_{n_2}(x), z_4(t)\gamma_{n_2}(x) \right)^T
$$
and
$$
\Phi(\theta)=\left(\phi_{n_1}(\theta), \overline{\phi}_{n_1}(\theta),\phi_{n_2}(\theta), \overline{\phi} _{n_2}(\theta)\right).
$$
For  $\varphi (\theta)\in \mathscr{C}_0^1$,   we have the following decomposition
$$
\varphi(\theta)=\Phi(\theta) z_x+w,  ~~w=(w^{(1)}, w^{(2)})^T\in \mathscr{C}_0^1\cap \mbox{Ker}\pi:=\mathscr{Q}^1.
$$
For simplicity  of notations,   we write
$$
\left(\begin{array}{c}
\left[\widetilde{F}, \beta_{\nu}^{(1)}\right]\vspace{0.2cm}\\
\left[\widetilde{F}, \beta_{\nu}^{(2)}\right]
\end{array}\right)_{\nu=n_1}^{\nu=n_2} ~~\mathrm{for}~~\mathrm{col} \left(
   \left(\begin{array}{c}
\left[\widetilde{F}, \beta_{n_1}^{(1)}\right]\vspace{0.2cm}\\
\left[\widetilde{F}, \beta_{n_1}^{(2)}\right]
\end{array}\right),
\left(\begin{array}{c}
\left[\widetilde{F}, \beta_{n_2}^{(1)}\right]\vspace{0.2cm}\\
\left[\widetilde{F}, \beta_{n_2}^{(2)}\right]
\end{array}\right)
\right)
$$
and  let $z=(z_1(t) ~ z_2(t) ~z_3(t)  ~z_4(t) )^T.$ Following the notations in \cite{Faria-00TAMS}, we define
$$X_0(\theta)=\left\{
\begin{array}{ll}
0, & -1\leq \theta <0,\vspace{0.2cm}\\
1, & \theta=0.
\end{array}\right.
$$
and then 
\begin{equation}
\label{PIX0}
\begin{array}{lll}
&& \pi\left(X_0(\theta)\widetilde{F}_2\left(\Phi(\theta) z_x,0\right)\right)\vspace{0.2cm}\\
&=&
\Phi_{n_1}(\theta)\Psi_{n_1}(0)\left(\begin{array}{c}\left[\widetilde{F}_2\left(\Phi(\theta) z_x,0\right),\beta_{n_1}^{(1)}\right]\vspace{0.2cm}\\
\left[\widetilde{F}_2\left(\Phi(\theta) z_x,0\right),\beta_{n_1}^{(2)}\right]
\end{array}\right)\gamma_{n_1}(x)\vspace{0.2cm}\\
&&
+\Phi_{n_2}(\theta)\Psi_{n_2}(0)\left(\begin{array}{c}\left[\widetilde{F}_2\left(\Phi(\theta) z_x,0\right),\beta_{n_2}^{(1)}\right]\vspace{0.2cm}\\
\left[\widetilde{F}_2\left(\Phi(\theta) z_x,0\right),\beta_{n_2}^{(2)}\right]
\end{array}\right)\gamma_{n_2}(x).
\end{array}
\end{equation}.

Then system \eqref{NFEQ1} is decomposed as a system of abstract
ODEs on $\mathbb{R}^4\times \mbox{Ker}\pi$:
\begin{equation}
\label{AODE}
\begin{cases}
\dot{z}=Bz+\Psi(0)\left(\begin{array}{c}\left[\widetilde{F}\left(\Phi(\theta) z_x+w,\mu\right),\beta_\nu^{(1)}\right]\vspace{0.1cm}\\
\left[\widetilde{F}\left(\Phi(\theta) z_x+w,\mu\right),\beta_\nu^{(2)}\right]
\end{array}\right)^{\nu=n_2}_{\nu=n_1},\vspace{0.1cm}\\
\dot{w}=A_{\mathcal{Q}^1}w+(I-\pi)X_0(\theta)\widetilde{F}\left(\Phi(\theta) z_x+w,\mu\right),
\end{cases}
\end{equation}
where
$$\Psi(0)=\mbox{diag}\left\{\Psi_{n_1}(0),\Psi_{n_2}(0)\right\}~~B=\mbox{diag}\left\{i\omega_{1c}, -i\omega_{1c},i \omega_{2c},
-i \omega_{2c}\right\},$$
$A_{\mathcal{Q}^1}: \mathcal{Q}^1\rightarrow \mbox{Ker}\pi$ is defined by
$$A_{\mathscr{Q}^1} w=\dot{w}+ X_0(\theta) \left(L_0(w)+L_0^d(w)-\dot{w}(0) \right).$$

Consider the formal Taylor expansion
$$\widetilde{F}(\varphi, \mu)=\sum\limits_{j\geq 2}\frac{1}{j!}\widetilde{F}_j(\varphi, \mu),  ~~ F(\varphi, \mu)=\sum\limits_{j\geq 2}\frac{1}{j!} F_j(\varphi, \mu)$$
and
$$F^d (\varphi, \mu)=\frac{1}{2}F^d_2 (\varphi, \mu)+\frac{1}{3!}F^d_3 (\varphi, \mu)+\frac{1}{4!}F^d_4 (\varphi, \mu).$$
From \eqref{FD}, we have
\begin{equation}
\label{D2MU}
F^d_2 (\varphi, \mu)=F^{d(0,0)}_2 (\varphi)+\mu_1F^{d(1,0)}_2 (\varphi)+\mu_2F^{d(0,1)}_2 (\varphi),
\end{equation}

\begin{equation}
\label{D3MU}
F^d_3 (\varphi, \mu)=\mu_1F^{d(1,0)}_3(\varphi)+\mu_2F^{d(0,1)}_3(\varphi)+\mu_1\mu_2F^{d(1,1)}_3(\varphi),
\end{equation}
with
\begin{equation}
\label{FD23}
\left\{
\begin{array}{l}
F^{d(0,0)}_2 (\varphi)=-2d_{21}^c\tau_c\left(\begin{array}{c}
0\vspace{0.2cm}\\
\varphi^{(1)}_{x}(-1)\varphi^{(2)}_{x}(0)+\varphi^{(1)}_{xx}(-1)\varphi^{(2)}(0) \end{array}\right),\vspace{0.2cm}\\
F^{d(1,0)}_2 (\varphi)=2D_1\varphi_{xx}(0)+2D_2^c\varphi_{xx}(-1),\vspace{0.2cm}\\
F^{d(0,1)}_2 (\varphi)=\frac{2\tau_c}{d_{21}^c}D_2^c\varphi_{xx}(-1),
\vspace{0.2cm}\\
F^{d(1,0)}_3 (\varphi)=-6d_{21}^c\left(\begin{array}{c}
0\vspace{0.2cm}\\
\varphi^{(1)}_{x}(-1)\varphi^{(2)}_{x}(0)+\varphi^{(1)}_{xx}(-1)\varphi^{(2)}(0)\end{array}\right),   \vspace{0.2cm}\\
F^{d(0,1)}_3 (\varphi)=-6\tau_c\left(\begin{array}{c}
0\vspace{0.2cm}\\
\varphi^{(1)}_{x}(-1)\varphi^{(2)}_{x}(0)+\varphi^{(1)}_{xx}(-1)\varphi^{(2)}(0)\end{array}\right),   \vspace{0.2cm}\\
F^{d(1,1)}_3 (\varphi)=-6\left(\begin{array}{c}
0\vspace{0.2cm}\\
v_*\varphi^{(1)}_{xx}(-1)\end{array}\right),
\end{array}
\right.
\end{equation}
and
$$
F^d_4 (\varphi, \mu)
=-24\mu_1\mu_2\left(\begin{array}{c}
0\vspace{0.2cm}\\
\varphi^{(1)}_{x}(-1)\varphi^{(2)}_{x}(0)+\varphi^{(1)}_{xx}(-1)\varphi^{(2)}(0)\end{array}\right).$$

From \eqref{NLTERM}, we have
\begin{equation}
\label{2NDT}
\widetilde{F}_2( \varphi, \mu)=2\mu_1A\varphi(0)+
F_2(\varphi, \mu)+F_2^d(\varphi, \mu),
\end{equation}
and
\begin{equation}
\label{3NDT}
\widetilde{F}_3( \varphi, \mu)=
F_3(\varphi, \mu) +F_3^d(\varphi, \mu).
\end{equation}

Then \eqref{AODE} is written as
$$
\left\{
\begin{array}{l}
  \dot{z}=Bz+\sum\limits_{j\geq 2} \frac{1}{j!}f^1_j(z, w, \mu),\vspace{0.2cm}\\
 \dot{w}=A_{\mathscr{Q}^1}w+ \sum\limits_{j\geq 2} \frac{1}{j!}f^2_j(z, w, \mu),
\end{array}
\right.
$$
where
\begin{equation}
\label{FJ1}
 f^1_j(z, w, \mu)=\Psi(0)\left(\begin{array}{c}\left[\widetilde{F}_j\left(\Phi(\theta) z_x+w,\mu\right),\beta_\nu^{(1)}\right]\vspace{0.1cm}\\
\left[\widetilde{F}_j\left(\Phi(\theta) z_x+w,\mu\right),\beta_\nu^{(2)}\right]
\end{array}\right)^{\nu=n_2}_{\nu=n_1},
\end{equation}
\begin{equation}
\label{FJ2}
 f^2_j(z, w, \mu)=(I-\pi)X_0(\theta) \widetilde{F}_j\left(\Phi(\theta) z_x+w, \mu\right).
\end{equation}

In terms of the normal form theory  of partial functional differential equations \cite{Faria-00TAMS}, after a recursive transformation of variables of the form
\begin{equation}
\label{TOV}
(z, w)=(\widetilde{z}, \widetilde{w})+\frac{1}{j!}\left(U_j^1(\widetilde{z}, \mu), U_j^2(\widetilde{z}, \mu)(\theta)\right), j\geq 2,
\end{equation}
where $z, \widetilde{z}\in \mathbb{R}^4, w, \widetilde{w}\in \mathscr{Q}^1$ and $U_j^1:\mathbb{R}^6\rightarrow  \mathbb{R}^4,U_j^2:\mathbb{R}^6\rightarrow  \mathscr{Q}^1$ are homogeneous polynomials of degree $j$ in $\widetilde{z}$ and $\mu,$
 the flow on the local center manifold for \eqref{NFEQ1} is written as
\begin{equation}
\label{NF}
  \dot{z}=Bz+\sum\limits_{j\geq2}\frac{1}{j!}g_j^1(z,0,\mu),
\end{equation}
which is the normal form as in the usual sense for ODEs.

Following  \cite{Li-JW-12JMAA} and \cite{Jiang-Song-15AMC},   we have
$$
g_2^1(z, 0,\mu)=\mbox{Proj}_{\mbox{Ker}(M_2^1)}f_2^1(z,0,\mu),
$$
and
\begin{equation}
\label{G310}
g_3^1(z,0,\mu)=\mbox{Proj}_{\mbox{Ker}(M_3^1)}\widetilde{f}_3^1(z,0,\mu)=\mbox{Proj}_{S}\widetilde{f}_3^1(z,0,0)+O(|\mu|^2|z|),
\end{equation}
where  $\widetilde{f}_3^1(z,0,\mu)$ is  the terms of order $3$ in $(z, \mu)$ obtained  after performing the change of variables \eqref{TOV} of order $2$ and is determined by  \eqref{FTLD31},
\begin{equation*}
\mbox{Ker}\left(M_2^1\right)=\mbox{Span}\left\{\mu_iz_1e_1,\mu_iz_2e_2,\mu_iz_3e_3,\mu_iz_4e_4,i=1,2\right\},
 \end{equation*}
\begin{equation*}
\mbox{Ker}\left(M_3^1\right)=\mbox{Span}\left\{
\begin{array}{l}
\mu_1\mu_2z_1e_1,\mu_i^2z_1e_1,z_1^2z_2e_1,z_1z_3z_4e_1,\mu_1\mu_2z_2e_2,\mu_i^2z_2e_2, \vspace{0.3cm}\\
z_1z_2^2e_2,z_2z_3z_4e_2,\mu_1\mu_2z_3e_3,\mu_i^2z_3e_3,z_3^2z_4e_3,z_1z_2z_3e_3,\vspace{0.3cm}\\
\mu_1\mu_2z_4e_4,\mu_i^2z_4e_4,z_3z_4^2e_4,z_1z_2z_4e_4,~~i=1, 2
\end{array}\right\},
 \end{equation*}
and
$$
S=\mbox{Span}\left\{z_1^2z_2e_1,z_1z_3z_4e_1,z_1z_2^2e_2,z_2z_3z_4e_2,z_3^2z_4e_3,z_1z_2z_3e_3,z_3z_4^2e_4,z_1z_2z_4e_4\right\}.
$$

For convenience, in what follows we set
$$\mathcal{H}\left(\alpha z_1^{q_1}z_2^{q_2}z_3^{q_3}z_4^{q_4} \mu_1^{\iota_1}\mu_2^{\iota_2}\right)=\left(\begin{array}{c}
\alpha z_1^{q_1}z_2^{q_2}z_3^{q_3}z_4^{q_4}\mu_1^{\iota_1}\mu_2^{\iota_2}\vspace{0.2cm}\\
\overline{\alpha}z_1^{q_2}z_2^{q_1}z_3^{q_4}z_4^{q_3}\mu_1^{\iota_1}\mu_2^{\iota_2}
\end{array}\right), ~\alpha\in \mathbb{C}.$$

\vspace{0.2cm}
\begin{subsubsection}
{Calculation of $\bf{g_2^1(z,0,\mu)}$}
\end{subsubsection}
 It follows from  \eqref{FJ1} that
 \begin{equation}
\label{G2F21}
 f^1_2(z, 0,  \mu)=\Psi(0)\left(\begin{array}{c}\left[\widetilde{F}_2\left(\Phi(\theta) z_x,\mu\right),\beta_\nu^{(1)}\right]\vspace{0.1cm}\\
\left[\widetilde{F}_2\left(\Phi(\theta) z_x,\mu\right),\beta_\nu^{(2)}\right]
\end{array}\right)^{\nu=n_2}_{\nu=n_1}.
\end{equation}

 From \eqref{2NDT}, we have
\begin{equation}
\label{TILF2}
\widetilde{F}_2(\Phi(\theta) z_x, \mu)=2\mu_1A\left(\Phi(0) z_x\right)+
F_2(\Phi(\theta) z_x, \mu)+F_2^d(\Phi(\theta) z_x, \mu).
\end{equation}


From \eqref{D2MU}- \eqref{FD23}, \eqref{G2F21} and \eqref{TILF2}, and noticing that
 $$\left[\gamma_{n_i}(x),  \gamma_{n_j}(x)\right] = \left\{\begin{array}{ll}
1, & i=j, \vspace{0.3cm}\\
0,  & i\not=j,
\end{array}
\right. $$
then for $n_1\not=n_2$, we have
\begin{equation}
\label{G211}
\left(\begin{array}{c}\left[2\mu_1A\left(\Phi(0) z_x\right),\beta_\nu^{(1)}\right]\vspace{0.3cm}\\
\left[2\mu_1A\left(\Phi(0) z_x\right),\beta_\nu^{(2)}\right]
\end{array}\right)= \left\{\begin{array}{ll}
2\mu_1A\left( \Phi_{n_1}(0) \left(\begin{array}{c}z_1\\z_2\end{array}\right) \right), & \nu=n_1, \vspace{0.3cm}\\
2\mu_1A\left( \Phi_{n_2}(0) \left(\begin{array}{c}z_3\\z_4\end{array}\right) \right),  & \nu=n_2,
\end{array}
\right.
\end{equation}
\begin{equation}
\label{G212}
\begin{array}{lll}
&&\left(\begin{array}{c}\left[\mu_1F^{d(1,0)}_2 \left(\Phi(\theta) z_x\right),\beta_\nu^{(1)}\right]\vspace{0.3cm}\\
\left[\mu_1F^{d(1,0)}_2 \left(\Phi(\theta) z_x\right),\beta_\nu^{(2)}\right]
\end{array}\right)\vspace{0.3cm}\\
&=& \left\{\begin{array}{ll}
-2(n_1/\ell)^2\mu_1\left(D_1\left( \Phi_{n_1}(0) \left(\begin{array}{c}z_1\\z_2\end{array}\right) \right)+D_2^c\left( \Phi_{n_1}(-1) \left(\begin{array}{c}z_1\\z_2\end{array}\right) \right)\right),& \nu=n_1, \vspace{0.3cm}\\
-2(n_2/\ell)^2\mu_1\left(D_1\left( \Phi_{n_2}(0) \left(\begin{array}{c}z_3\\z_4\end{array}\right) \right)+D_2^c\left( \Phi_{n_2}(-1) \left(\begin{array}{c}z_3\\z_4\end{array}\right) \right)\right), & \nu=n_2,
\end{array}
\right.
\end{array}
\end{equation}
and
\begin{equation}
\label{FG213}
\begin{array}{lll}
&&\left(\begin{array}{c}\left[\mu_2F^{d(0,1)}_2 \left(\Phi(\theta) z_x\right),\beta_\nu^{(1)}\right]\vspace{0.3cm}\\
\left[\mu_2F^{d(0,1)}_2 \left(\Phi(\theta) z_x\right),\beta_\nu^{(2)}\right]
\end{array}\right)\vspace{0.3cm}\\
&=& \left\{\begin{array}{ll}
-\frac{2(n_1/\ell)^2\tau_c}{d_{21}^c}\mu_2D_2^c\left( \Phi_{n_1}(-1) \left(\begin{array}{c}z_1\\z_2\end{array}\right) \right), & \nu=n_1, \vspace{0.3cm}\\
-\frac{2(n_2/\ell)^2\tau_c}{d_{21}^c}\mu_2D_2^c\left( \Phi_{n_2}(-1) \left(\begin{array}{c}z_3\\z_4\end{array}\right) \right),  & \nu=n_2.
\end{array}
\right.
\end{array}
\end{equation}

By \eqref{FPM}, it is easy to verify that  for all $\mu\in \mathbb{R}^2$,
\begin{equation}
\label{F2PHI}
F_2( \Phi(\theta) z_x,\mu)=F_2( \Phi(\theta) z_x, 0).
\end{equation}
   This,  together with  \eqref{G211}, \eqref{G212} and \eqref{FG213}, yields to
\begin{equation}
\label{G21}
\begin{array}{lll}
g^1_2(z,0,\mu)&=&\mbox{Proj}_{\tiny{\mbox{Ker}(M_2^1)}}f_2^1(z,0,\mu)\vspace{0.2cm}\\
&=&\left(\begin{array}{c}
\mathcal{H}\left(\left(B^{(1)}_1\mu_1+B^{(2)}_1\mu_2\right)z_1\right)\vspace{0.2cm}\\
\mathcal{H}\left(\left(B^{(1)}_3\mu_1+B^{(2)}_3\mu_2\right)z_3\right)
\end{array}
\right),
\end{array}
\end{equation}
 where
$$
\begin{array}{l}
B^{(1)}_1=2\psi_{n_1}^{T}(0)\left(A \phi_{n_1}(0) -(n_1/\ell)^2   \left(D_1\phi_{n_1}(0)+D_2^c\phi_{n_1}(-1)\right)\right)=2i\omega_1\psi_{n_1}^{T}(0)\phi_{n_1}(0), \vspace{0.2cm}\\

B^{(1)}_3=2\psi_{n_2}^{T}(0)\left(A \phi_{n_2}(0) -(n_2/\ell)^2   \left(D_1\phi_{n_2}(0)+D_2^c\phi_{n_2}(-1)\right)\right)=2i\omega_2\psi_{n_2}^{T}(0)\phi_{n_2}(0),\vspace{0.2cm}\\

B^{(2)}_1=-\frac{2(n_1/\ell)^2\tau_c}{d_{21}^c}\psi_{n_1}^T(0)\left(D_2^c \phi_{n_1} (-1)\right),
~~~~B^{(2)}_3=-\frac{2(n_2/\ell)^2\tau_c}{d_{21}^c}\psi_{n_2}^T(0)\left(D_2^c \phi_{n_2} (-1)\right).
\end{array}
$$

For $n_1=n_2$, noticing  the fact that
$\left[  \Phi(\theta) z_x, \gamma_{n_j}(x)\right]=\Phi(\theta) z, j=1,2,$ and using the similar calculations  as above, it is easy to obtain the same $g^1_2(z,0,\mu)$ as in   \eqref{G21}.
\vspace{0.2cm}
\begin{subsubsection}
{Calculation of $\bf{g_3^1(z,0,\mu)}$}
\end{subsubsection}

In this subsection, we  calculate the
third term $g^1_3(z,0,0)$ in terms of \eqref{G310}. Notice that
  $\frac{1}{3!}\widetilde{f}_3^1$ in \eqref{G310} is the term of order 3
obtained after the changes of variables in previous step.
Denote
\begin{equation}
\label{F211W}
 f^{(1,1)}_2(z, w, 0)=\Psi(0)\left(\begin{array}{c}\left[F_2\left(\Phi(\theta) z_x+w, 0\right),\beta_\nu^{(1)}\right]\vspace{0.2cm}\\
\left[F_2\left(\Phi(\theta) z_x+w, 0\right),\beta_\nu^{(2)}\right]
\end{array}\right)^{\nu=n_2}_{\nu=n_1},
\end{equation}
\begin{equation}
\label{F212W}
 f^{(1,2)}_2(z, w, 0)=\Psi(0)\left(\begin{array}{c}\left[F^d_2\left(\Phi(\theta) z_x+w, 0\right),\beta_\nu^{(1)}\right]\vspace{0.2cm}\\
\left[F^d_2\left(\Phi(\theta) z_x+w, 0\right),\beta_\nu^{(2)}\right]
\end{array}\right)^{\nu=n_2}_{\nu=n_1}.
\end{equation}

In addition, it follows from \eqref{G21}   that $g_2^1(z,0,0)=(0,0,0,0)^T$. Then  $\widetilde{f}_3^1(z,0,0)$ is determined  by
\begin{equation}
\label{FTLD31}
\begin{array}{lll}
&&\widetilde{f}_3^1(z,0,0)\vspace{0.3cm}\\
&=&f_3^1(z,0,0)+\frac{3}{2}\left[\left(D_zf_2^1(z,0,0)\right)U_2^1(z,0)+\left(D_wf_2^{(1,1)}(z,0,0)\right)U_2^2(z,0)(\theta)\right.\vspace{0.3cm}\\
&&\left.+\left(D_{w,w_x,w_{xx}}f^{(1,2)}_2(z,0,0) \right)U_2^{(2,d)}(z, 0)(\theta)\right],
\end{array}
 \end{equation}
 where $f_2^1(z,0,0)=f_2^{(1,1)}(z,0,0)+f_2^{(1,2)}(z,0,0)$, 
 $$D_{w,w_x,w_{xx}}f^{(1,2)}_2(z,0,0)=\left(D_wf^{(1,2)}_2(z,0,0), D_{w_x}f^{(1,2)}_2(z,0,0), D_{w_{xx}}f^{(1,2)}_2(z,0,0) \right),$$
  \begin{equation*}
U_2^1(z,0)=\left(M_2^1\right)^{-1}\mbox{Proj}_{Im\left(M_2^1\right)}f_2^1(z,0,0),~~U_2^2(z,0)(\theta)=\left(M_2^2\right)^{-1}f_2^2(z,0,0),
 \end{equation*}
 and
\begin{equation}
\label{U2D}
 U_2^{(2,d)}(z, 0)(\theta)=\mbox{col}\left(U_2^2(z, 0)(\theta), U_{2x}^2(z, 0)(\theta), U_{2xx}^2(z, 0)(\theta)\right).
 \end{equation}

 Next, we compute  $\mbox{Proj}_{S}\widetilde{f}_3^1(z,0,0)$ step by step according to \eqref{FTLD31}. The calculation is divided into the following four steps.

\vspace{0.4cm}
\noindent{\bf
 {Step 1: The calculation of $\bf{\mbox{Proj}_{S}f_3^1(z,0,0)}$ }}\\
From  \eqref{D3MU} and \eqref{3NDT},  we have
$\widetilde{F}_3(\Phi(\theta)z_x,0)= F_3(\Phi(\theta)
z_x,0)$, which can be written as
\begin{equation}
\label{F3TILDM0}
\widetilde{F}_3(\Phi(\theta) z_x,0)=F_3(\Phi(\theta)
z_x,0)=\sum_{q_1+q_2+q_3+q_4=3}A_{q_1q_2q_3q_4}\gamma_{n_1}^{q_1+q_2}(x)\gamma_{n_2}^{q_3+q_4}(x)z_1^{q_1}z_2^{q_2}z_3^{q_3}z_4^{q_4},
\end{equation}
where $q_1,q_2,q_3,q_4\in \mathbb{N}_0$, and
\begin{equation}
\label{AQ1}
A_{q_1q_2q_3q_4}=\left(A^{(1)}_{q_1q_2q_3q_4}, A^{(2)}_{q_1q_2q_3q_4}\right)^T.
\end{equation}

It follows from \eqref{FJ1} and \eqref{F3TILDM0}  that
$$
f_3^1(z, 0, 0)=\Psi (0)\left(
 \begin{array}{c}
\sum\limits_{q_1+q_2+q_3+q_4=3}A_{q_1q_2q_3q_4}\int_0^{\ell\pi}\gamma_{n_1}^{q_1+q_2+1}(x)\gamma^{q_3+q_4}_{n_2}(x)dxz_1^{q_1}z_2^{q_2}z_3^{q_3}z_4^{q_4}\vspace{0.3cm} \\
\sum\limits_{q_1+q_2+q_3+q_4=3}A_{q_1q_2q_3q_4}\int_0^{\ell\pi}\gamma_{n_1}^{q_1+q_2}(x)\gamma^{q_3+q_4+1}_{n_2}(x)dxz_1^{q_1}z_2^{q_2}z_3^{q_3}z_4^{q_4}
 \end{array}\right),
$$
which, together with
the fact that
$$ \int_0^{\ell\pi}\gamma_{n_1}^2(x)\gamma_{n_2}^2(x) dx=\left\{\begin{array}{cc}
\frac{3}{2\ell\pi}, &  n_2=n_1,\\
\frac{1}{\ell\pi}, & n_2 \not= n_1,
\end{array}
\right. $$
implies that
\begin{equation}\label{PF31}
\mbox{Proj}_{S}f_3^1(z,0,0)=\left(\begin{array}{c}
\mathcal{H} \left(C_{11}z_1^2z_2+C_{12}z_1z_3z_4\right)\vspace{0.2cm}\\
\mathcal{H} \left( C_{31}z_3^2z_4+C_{32}z_1z_2z_3 \right)
\end{array}\right),
\end{equation}
where
$$
\begin{array}{l}
C_{11}=\frac{3}{2\ell\pi}  \psi_{n_1}^{T}(0)A_{2100}, ~~C_{31}=\frac{3}{2\ell\pi} \psi_{n_2}^{T}(0)A_{0021},\vspace{0.2cm}\\
C_{12}= \left\{\begin{array}{cc}
\frac{3}{2\ell\pi} \psi_{n_1}^{T}(0)A_{1011}, &  n_2=n_1,\\
\frac{1}{\ell\pi}\psi_{n_1}^{T}(0)A_{1011} , & n_2 \not= n_1,
\end{array}
\right.~~
 C_{32}= \left\{\begin{array}{cc}
\frac{3}{2\ell\pi}  \psi_{n_2}^{T}(0)A_{1110}, &  n_2=n_1,\\
\frac{1}{\ell\pi} \psi_{n_2}^{T}(0)A_{1110} , & n_2 \not= n_1.
\end{array}
\right.
\end{array}
$$

\vspace{0.4cm}
\noindent{\bf Step 2: The calculation of $\bf{\mbox{Proj}_{S}\left(\left(D_zf_2^1\right)(z,0,0)U_2^1(z,0)\right)}$}

From \eqref{2NDT}, we have
\begin{equation}
\label{TLDF2}
\widetilde{F}_2( \Phi(\theta) z_x,0)=F_2( \Phi(\theta) z_x,0)+F^{d(0,0)}_2\left (\Phi(\theta) z_x\right) 
\end{equation}

By \eqref{F2PHI}, we  write
\begin{equation}
\label{F2EXP}
\begin{array}{lll}
&&F_2(\Phi(\theta) z_x+w,\mu) =F_2(\Phi(\theta)
z_x+w,0)   \vspace{0.2cm}\\
&=&\sum\limits_{q_1+q_2+q_3+q_4=2}A_{q_1q_2q_3q_4}\gamma_{n_1}^{q_1+q_2}(x)\gamma_{n_2}^{q_3+q_4}(x)z_1^{q_1}z_2^{q_2}z_3^{q_3}z_4^{q_4}  \vspace{0.2cm}\\
&&+\mathcal{S}_2(\Phi(\theta)
z_x,w)+O\left(|w|^2\right),
\end{array}
\end{equation}
where $q_1,q_2,q_3,q_4\in \mathbb{N}_0$ and
 $ 
 \mathcal{S}_2(\Phi(\theta)
z_x,w)$  is the second cross terms of $\Phi z_x$ and $w$.
In addition,  by  \eqref{D2MU},   we write
\begin{equation}
\label{F2DEXP}
\begin{array}{lll}
&&F_2^d\left(\Phi(\theta) z_x,0\right) =F^{d(0,0)}_2\left (\Phi(\theta) z_x\right)\vspace{0.2cm}\\
&=&\sum_{q_1+q_2+q_3+q_4=2}A^{(d,1)}_{q_1q_2q_3q_4} \left((-n_1/\ell)\xi_{n_1}(x)\right)^{q_1+q_2}  \left((-n_2/\ell)\xi_{n_2}(x)\right)^{q_3+q_4}  z_1^{q_1}z_2^{q_2} z_3^{q_3}z_4^{q_4} \vspace{0.2cm}\\
&&-(n_1/\ell)^2A^{(d,2)}_{2000} \gamma_{n_1}^2(x)z_1^2-(n_1/\ell)^2A^{(d,2)}_{0200}  \gamma^2_{n_1}(x) z_2^2-(n_2/\ell)^2A^{(d,2)}_{0020} \gamma_{n_2}^2(x)z_3^2
\vspace{0.2cm}\\
&&-(n_2/\ell)^2A^{(d,2)}_{0002}  \gamma_{n_2}^2(x) z_4^2-(n_1/\ell)^2A^{(d,2)}_{1100} \gamma_{n_1}^2(x)z_1z_2-(n_2/\ell)^2A^{(d,2)}_{0011}  \gamma_{n_2}^2(x) z_3z_4
\vspace{0.2cm}\\
&&-\gamma_{n_1}(x)\gamma_{n_2}(x)\left( \left((n_1/\ell)^2A^{(d,2)}_{1010} +(n_2/\ell)^2A^{(d,3)}_{1010}     \right)z_1z_3 + \left((n_1/\ell)^2A^{(d,2)}_{1001} +(n_2/\ell)^2A^{(d,3)}_{1001}\right)z_1z_4\right.\vspace{0.2cm}\\
&&+  \left((n_1/\ell)^2A^{(d,2)}_{0110} +(n_2/\ell)^2A^{(d,3)}_{0110}\right) z_2z_3 + \left((n_1/\ell)^2A^{(d,2)}_{0101} +(n_2/\ell)^2A^{(d,3)}_{0101}\right) z_2z_4,
\end{array}
\end{equation}
where $\xi_{n_j}=\frac{\sqrt{2}}{\sqrt{\ell\pi}}\sin\left(\frac{n_jx}{\ell}\right)$  with $j=1,2$, and $A_{i_1i_2i_3i_4}^{(d,j)}$ with $j=1,2,3$ are given in Appendix A.

It is easy to verify that
$$
\int_0^{\ell\pi}\gamma_{n_1}^3(x)dx=\int_0^{\ell\pi}\gamma_{n_2}^3(x)dx=\int_0^{\ell\pi}\gamma_{n_1}(x)\gamma^2_{n_2}(x)dx=0,
$$
$$
\int_0^{\ell\pi}\xi_{n_j}^2(x)\gamma_{n_j}(x)dx=0, j=1,2,
\int_0^{\ell\pi}\xi_{n_2}^2(x)\gamma_{n_1}(x)dx=\int_0^{\ell\pi}\xi_{n_1}(x)\xi_{n_2}(x)\gamma_{n_2}(x)dx=0,
$$

$$
  \int_0^{\ell\pi}\gamma_{n_1}^2(x)\gamma_{n_2}(x)dx=\int_0^{\ell\pi}\xi_{n_1}(x)\xi_{n_2}(x)\gamma_{n_1}(x)dx=\left\{\begin{array}{cc}
\frac{1}{\sqrt{2\ell\pi}}, & n_2 = 2n_1,\\
0, &  n_2\not=2n_1,
\end{array}
\right.
$$
and
$$
\int_0^{\ell\pi}\xi_{n_1}^2(x)\gamma_{n_2}(x)dx=\left\{\begin{array}{cc}
-\frac{1}{\sqrt{2\ell\pi}}, & n_2 = 2n_1,\\
0, &  n_2\not=2n_1.
\end{array}
\right.
$$

 Then, by \eqref{TLDF2} and  a direct calculation, we have
$$
\begin{array}{lll}
&& f^1_2(z, 0, 0) = \Psi(0)\left(\begin{array}{c}\left[\widetilde{F}_2\left(\Phi(\theta) z_x, 0\right),\beta_\nu^{(1)}\right]\vspace{0.1cm}\\
\left[\widetilde{F}_2\left(\Phi(\theta) z_x, 0\right),\beta_\nu^{(2)}\right]
\end{array}\right)^{\nu=n_2}_{\nu=n_1}  \vspace{0.3cm} \\
 &=& \left\{
 \begin{array}{cc}
\frac{1}{\sqrt{2\ell\pi}}\Psi(0) \left(
 \begin{array}{c}
\widetilde{A}_{1010}z_1z_3+\widetilde{A}_{1001}z_1z_4+\widetilde{A}_{0110}z_2z_3+\widetilde{A}_{0101}z_2z_4\vspace{0.2cm} \\
\widetilde{A}_{2000}z_1^2+\widetilde{A}_{0200}z_2^2+\widetilde{A}_{1100}z_1z_2
 \end{array}\right), &  n_2=2n_1,\vspace{0.2cm} \\
  (0,  0,  0, 0)^T, &  n_2\not=2n_1,\vspace{0.2cm} \\
 \end{array}
 \right.
 \end{array}
$$
where
\begin{equation}
\label{WLDAJ1}
\left\{\begin{array}{l}
\widetilde{A}_{j_1j_2j_3j_4}=A_{j_1j_2j_3j_4}+\frac{n_1n_2}{\ell^2}A_{j_1j_2j_3j_4}^{(d,1)}-\frac{n_1^2}{\ell^2}A_{j_1j_2j_3j_4}^{(d,2)}-\frac{n_2^2}{\ell^2}A_{j_1j_2j_3j_4}^{(d,3)},\vspace{0.2cm}\\
j_1, j_2, j_3, j_4=0, 1, \quad j_1+j_2=1, \quad  j_3+j_4=1,
\end{array}\right.
\end{equation}
\begin{equation}
\label{WLDAJ2}
\left\{\begin{array}{l}
\widetilde{A}_{j_1j_2j_3j_4}=A_{j_1j_2j_3j_4}-\frac{n_1^2}{\ell^2}\left(A_{j_1j_2j_3j_4}^{(d,1)}+ A_{j_1j_2j_3j_4}^{(d,2)}\right),\vspace{0.2cm}\\
j_1, j_2=0, 1, 2, \quad j_1+j_2=2, \quad   j_3=j_4=0.
\end{array}\right.
\end{equation}
Then, for $n_2\not=2n_1$,  $U_2^1(z,0)=(0, 0, 0, 0)^T$, and for $n_2=2n_1$,
\begin{equation*}
\begin{array}{lll}
&&U_2^1(z,0)\vspace{0.2cm}\\
&=&\left(M_2^1\right)^{-1}\mbox{Proj}_{\mbox{Im} M_2^1}f_2^1(z,0,0)\vspace{0.2cm}\\
&=&\frac{1}{i\omega_{1c}\sqrt{2\ell\pi}}\left(\begin{array}{c}
\psi_{n_1}^T(0)\left(\frac{\omega_{1c}}{\omega_{2c}}\widetilde{A}_{1010}z_1z_3-\frac{\omega_{1c}}{\omega_{2c}}\widetilde{A}_{1001}z_1z_4+\frac{\omega_{1c}}{\omega_{2c}-2\omega_{1c}}\widetilde{A}_{0110}z_2z_3-\frac{\omega_{1c}}{\omega_{2c}+2\omega_{1c}}\widetilde{A}_{0101}z_2z_4\right)\vspace{0.1cm}\\
\overline{\psi}_{n_1}^T(0)\left(\frac{\omega_{1c}}{\omega_{2c}+2\omega_{1c}}\widetilde{A}_{1010}z_1z_3-\frac{\omega_{1c}}{\omega_{2c}-2\omega_{1c}}\widetilde{A}_{1001}z_1z_4+\frac{\omega_{1c}}{\omega_{2c}}\widetilde{A}_{0110}z_2z_3-\frac{\omega_{1c}}{\omega_{2c}}\widetilde{A}_{0101}z_2z_4\right)\vspace{0.4cm}\\
\psi_{n_2}^T(0)\left(-\frac{\omega_{1c}}{\omega_{2c}-2\omega_{1c}}\widetilde{A}_{2000}z_1^2-\frac{\omega_{1c}}{\omega_{2c}+2\omega_{1c}}\widetilde{A}_{0200}z_2^2-\frac{\omega_{1c}}{\omega_{2c}}\widetilde{A}_{1100}z_1z_2\right)\vspace{0.1cm}\\
\overline{\psi}_{n_2}^T(0)\left(\frac{\omega_{1c}}{\omega_{2c}+2\omega_{1c}}\widetilde{A}_{2000}z_1^2+\frac{\omega_{1c}}{\omega_{2c}-2\omega_{1c}}\widetilde{A}_{0200}z_2^2+\frac{\omega_{1c}}{\omega_{2c}}\widetilde{A}_{1100}z_1z_2\right)
\end{array}\right).
\end{array}
\end{equation*}

Hence,
\begin{equation}
\label{PDZF21}
\mbox{Proj}_{S}\left[\left(D_zf_2^1\right)(z,0,0)U_2^1(z,0)\right]=\left(\begin{array}{c}
\mathcal{H} \left(D_{11}z_1^2z_2+D_{12}z_1z_3z_4\right)\vspace{0.1cm}\\
\mathcal{H} \left(D_{31}z_3^2z_4+D_{32}z_1z_2z_3\right)\vspace{0.1cm}\\
\end{array}\right),
\end{equation}
where  for $n_2\not= 2n_1$,
$
D_{11}=D_{12}=D_{31}=D_{32}=0 $, and for $n_2=2n_1$, $D_{31}=0$,
$$
\begin{array}{lll}
D_{11}&=&
\frac{1}{2i\ell \pi}\left(-\frac{1}{\omega_{2c}}\left(\psi_{n_1}^T(0)\widetilde{A}_{1010}\right)\left(\psi_{n_2}^T(0)\widetilde{A}_{1100}\right)
+\frac{1}{\omega_{2c}}\left(\psi_{n_1}^T(0)\widetilde{A}_{1001}\right)\left(\overline{\psi}_{n_2}^T(0)\widetilde{A}_{1100}\right)
\right.\vspace{0.2cm}\\
&&\left.-\frac{1}{\omega_{2c}-2\omega_{1c}}\left(\psi_{n_1}^T(0)\widetilde{A}_{0110}\right)\left(\psi_{n_2}^T(0)\widetilde{A}_{2000}\right)
+\frac{1}{\omega_{2c}+2\omega_{1c}}\left(\psi_{n_1}^T(0)\widetilde{A}_{0101}\right)\left(\overline{\psi}_{n_2}^T(0)\widetilde{A}_{2000}\right)\right),
\end{array}
$$
$$
\begin{array}{lll}
D_{12}&=&
\frac{1}{2i\ell \pi}\left(-\frac{1}{\omega_{2c}}\left(\psi_{n_1}^T(0)\widetilde{A}_{1010}\right)\left(\psi_{n_1}^T(0)\widetilde{A}_{1001}\right)
+\frac{1}{\omega_{2c}}\left(\psi_{n_1}^T(0)\widetilde{A}_{1001}\right)\left(\psi_{n_1}^T(0)\widetilde{A}_{1010}\right)
\right.\vspace{0.2cm}\\
&&\left.-\frac{1}{\omega_{2c}-2\omega_{1c}}\left(\psi_{n_1}^T(0)\widetilde{A}_{0110}\right)\left(\overline{\psi}_{n_1}^T(0)\widetilde{A}_{1001}\right)
+\frac{1}{\omega_{2c}+2\omega_{1c}}\left(\psi_{n_1}^T(0)\widetilde{A}_{0101}\right)\left(\overline{\psi}_{n_1}^T(0)\widetilde{A}_{1010}\right)\right),
\end{array}
$$
$$
\begin{array}{lll}
D_{32}&=&
\frac{1}{2i\ell \pi}\left(\frac{2}{\omega_{2c}-2\omega_{1c}}\left(\psi_{n_2}^T(0)\widetilde{A}_{2000}\right)\left(\psi_{n_1}^T(0)\widetilde{A}_{0110}\right)
+\frac{1}{\omega_{2c}}\left(\psi_{n_2}^T(0)\widetilde{A}_{1100}\right)\left(\psi_{n_1}^T(0)\widetilde{A}_{1010}\right)
\right.\vspace{0.2cm}\\
&&\left.+\frac{1}{\omega_{2c}}\left(\psi_{n_2}^T(0)\widetilde{A}_{1100}\right)\left(\overline{\psi}_{n_1}^T(0)\widetilde{A}_{0110}\right)
+\frac{2}{\omega_{2c}+2\omega_{1c}}\left(\psi_{n_2}^T(0)\widetilde{A}_{0200}\right)\left(\overline{\psi}_{n_1}^T(0)\widetilde{A}_{1010}\right)\right).
\end{array}
$$
\vspace{0.5cm}
\noindent{\bf
 Step 3: The calculation of $\bf{\mbox{Proj}_{S}\left(\left(D_wf_2^{(1,1)}(z,0,0)\right)U_2^2(z,0)(\theta)\right)}$}

 Let
\begin{equation}
\label{U22}
 U_2^2(z, 0)(\theta)\triangleq h(\theta, z)=\sum\limits_{n\in \mathbb{N}_0}  h_n(\theta,z) \gamma_n(x),
 \end{equation}
where
$$  h_n(\theta,z) =\sum\limits_{q_1+q_2+q_3+q_4=2} h_{n,q_1q_2q_3q_4}(\theta)z_1^{q_1}z_2^{q_2}z_3^{q_3}z_4^{q_4}$$
with
$$h_{n,q_1q_2q_3q_4}(\theta)=\left( h_{n,q_1q_2q_3q_4}^{(1)}(\theta)\quad h_{n,q_1q_2q_3q_4}^{(2)}(\theta)\right) ^T.$$

By \eqref{U22}, we have
\begin{equation}
\label{U2DX}
\left\{\begin{array}{l}
U_{2x}^2(z, 0)(\theta)=h_{x}(\theta, z)=- \sum\limits_{n\in \mathbb{N}_0} (n/\ell) h_n(\theta,z) \xi_n(x),\vspace{0.2cm}\\
U_{2xx}^2(z, 0)(\theta)=h_{xx}(\theta, z)=-\sum\limits_{n\in \mathbb{N}_0} (n/\ell)^2 h_n(\theta,z) \gamma_n(x).
\end{array}\right.
\end{equation}

Then, from \eqref{F211W} and \eqref{U22},  we obtain
 $$
\begin{array}{lll}
&& \left(D_wf^{(1,1)}_2(z,0,0)\right)U_2^2(z,0)\vspace{0.2cm}\\
&=&\Psi(0)
   \left(\begin{array}{c}
\left[\left.D_w F_2\left(\Phi(\theta) z_x+w, 0\right)\right|_{w=0}\left(\sum\limits_{n\in \mathbb{N}_0}  h_n(\theta,z) \gamma_n(x)\right), \beta_{\nu}^{(1)}\right]\vspace{0.2cm}\\
\left[\left.D_w F_2\left(\Phi(\theta) z_x+w, 0\right)\right|_{w=0}\left(\sum\limits_{n\in \mathbb{N}_0}  h_n(\theta,z) \gamma_n(x)\right), \beta_{\nu}^{(2)}\right]
\end{array}\right)_{\nu=n_1}^{\nu=n_2}.\\
\end{array}
 $$

By \eqref{F2EXP} and a  straightforward  computation, we obtain
$$\left.D_w F_2\left(\Phi(\theta) z_x+w, 0\right)\right|_{w=0}\left(\sum\limits_{n\in \mathbb{N}_0}  h_n(\theta,z) \gamma_n(x)\right)= \mathcal{S}_2\left(\Phi(\theta) z_x, \sum\limits_{n\in \mathbb{N}_0}  h_n(\theta,z) \gamma_n(x)\right) $$
and
$$
\begin{array}{lll}
&&\left(\begin{array}{c}
                        \left[\mathcal{S}_2\left(\Phi(\theta) z_x, \sum\limits_{n\in \mathbb{N}_0}  h_n(\theta,z) \gamma_n(x)\right), \beta_{\nu}^{(1)}\right] \vspace{0.2cm}\\
  \left[\mathcal{S}_2\left(\Phi(\theta)  z_x, \sum\limits_{n\in \mathbb{N}_0}  h_n(\theta,z) \gamma_n(x)\right), \beta_{\nu}^{(2)}\right]
                         \end{array}
\right)\vspace{0.2cm}\\
&=&\sum\limits_{n\in \mathbb{N}_0} b_{n_1,n,\nu}\left(\mathcal{S}_2\left(\phi_{n_1}(\theta) z_1,h_n(\theta, z)\right)+\mathcal{S}_2\left(\overline{\phi}_{n_1}(\theta) z_2,h_n(\theta, z)\right)\right)\vspace{0.2cm}\\
&&
+\sum\limits_{n\in \mathbb{N}_0} b_{n_2,n,\nu}\left(\mathcal{S}_2\left(\phi_{n_2}(\theta) z_3,h_n(\theta, z)\right)+\mathcal{S}_2\left(\overline{\phi}_{n_2}(\theta) z_4,h_n(\theta, z)\right)\right),
\end{array}
$$
where $n=0, 1, 2, \cdots, \nu=n_1, n_2,$
\begin{equation}
\label{BKJM}
b_{n_j,n,\nu}=\int_0^{\ell\pi}\gamma_{n_j}(x)\gamma_n(x)\gamma_{\nu}(x)dx=\left\{\begin{array}{ll}
\frac{1}{\sqrt{\ell\pi}}, & n=0, \nu= n_j,    \vspace{0.2cm}\\
\frac{1}{\sqrt{2\ell\pi}}, & n=2n_j,  \nu=n_j,\vspace{0.2cm}\\
\frac{1}{\sqrt{2\ell\pi}}, & n=n_1+n_2,  \nu=n_{j+(-1)^{j+1}},\vspace{0.2cm}\\
\frac{1}{\sqrt{2\ell\pi}}, & n=n_2-n_1,  \nu=n_{j+(-1)^{j+1}},  n_1<n_2,\vspace{0.2cm}\\
 0, & \mbox{otherwise}.
 \end{array}
\right.
\end{equation}

Hence,
$$
\begin{array}{lll}
&&\left(D_wf_2^{(1,1)}(z, 0,
0)\right)U^2_2(z,0)(\theta)\vspace{0.2cm}\\
&=&\Psi(0)\left(
\begin{array}{l}
\sum\limits_{n=0, 2n_1} b_{n_1,n,n_1}\left(\mathcal{S}_2\left(\phi_{n_1}(\theta) z_1,h_n(\theta, z)\right)+\mathcal{S}_2\left(\overline{\phi}_{n_1}(\theta) z_2,h_n(\theta, z)\right)\right)\vspace{0.2cm}\\
\quad\quad+\sum\limits_{n=n_1+n_2, n_2-n_1} b_{n_2,n,n_1}\left(\mathcal{S}_2\left(\phi_{n_2}(\theta) z_3,h_n(\theta, z)\right)+\mathcal{S}_2\left(\overline{\phi}_{n_2}(\theta) z_4,h_n(\theta, z)\right)\right)\vspace{0.4cm}\\
\sum\limits_{n=0, 2n_2} b_{n_2,n,n_2}\left(\mathcal{S}_2\left(\phi_{n_2}(\theta) z_3,h_n(\theta, z)\right)+\mathcal{S}_2\left(\overline{\phi}_{n_2}(\theta) z_4,h_n(\theta, z)\right)\right)\vspace{0.2cm}\\
\quad\quad+\sum\limits_{n=n_1+n_2, n_2-n_1} b_{n_1,n,n_2}\left(\mathcal{S}_2\left(\phi_{n_1}(\theta) z_1,h_n(\theta, z)\right)+\mathcal{S}_2\left(\overline{\phi}_{n_1}(\theta) z_2,h_n(\theta, z)\right)\right)
\end{array}
\right).
\end{array}
$$

Then, we have
\begin{equation}
\label{PDWF221}
\textmd{Proj}_{S}\left(D_wf_2^{(1,1)}(z, 0,
0)U^2_2(z,0)\right)=\left(\begin{array}{c}
\mathcal{H} \left(E_{11}z_1^2z_2+E_{12}z_1z_3z_4\right)\vspace{0.1cm}\\
\mathcal{H} \left(E_{31}z_3^2z_4+E_{32}z_1z_2z_3\right)\vspace{0.1cm}\\
\end{array}\right),
 \end{equation}
\begin{equation}
\label{EIJ}
\left\{
\begin{array}{l}
\begin{array}{lll}
E_{11}&=&\frac{1}{\sqrt{\ell\pi}}\psi_{n_1}^T(0)\left(\mathcal{S}_2(\phi_{n_1}(\theta) ,h_{0,1100}(\theta))+\mathcal{S}_2(\overline{\phi}_{n_1}(\theta) ,h_{0,2000}(\theta) ) \right) \vspace{0.2cm}\\
&&+\frac{1}{\sqrt{2\ell\pi}}\psi_{n_1}^T(0)\left(\mathcal{S}_2(\phi_{n_1}(\theta) ,h_{2n_1,1100}(\theta))+\mathcal{S}_2(\overline{\phi}_{n_1}(\theta) ,h_{2n_1,2000}(\theta) ) \right),
\end{array}
 \vspace{0.3cm}\\
\begin{array}{lll}
E_{12}&=&\frac{1}{\sqrt{\ell\pi}}\psi_{n_1}^T(0)\mathcal{S}_2(\phi_{n_1}(\theta) ,h_{0,0011}(\theta))+\frac{1}{\sqrt{2\ell\pi}}\psi_{n_1}^T(0)\mathcal{S}_2(\phi_{n_1}(\theta) ,h_{2n_1,0011}(\theta))
\vspace{0.2cm}\\
&&+\frac{1}{\sqrt{2\ell\pi}} \psi_{n_1}^T(0)\left(\mathcal{S}_2(\phi_{n_2}(\theta) ,h_{n_1+n_2,1001}(\theta))+\mathcal{S}_2(\overline{\phi}_{n_2}(\theta) ,h_{n_1+n_2,1010}(\theta) ) \right),
\vspace{0.2cm}\\
&&+\delta_{n_1n_2}\psi_{n_1}^T(0)\left(\mathcal{S}_2(\phi_{n_2}(\theta),h_{n_2-n_1,1001}(\theta))+\mathcal{S}_2(\overline{\phi}_{n_2}(\theta) ,h_{n_2-n_1,1010}(\theta) ) \right)),
\end{array}
 \vspace{0.3cm}\\
\begin{array}{lll}
E_{31}&=& \frac{1}{\sqrt{\ell\pi}}\psi_{n_2}^T(0)\left(\mathcal{S}_2(\phi_{n_2}(\theta) ,h_{0,0011}(\theta))+\mathcal{S}_2(\overline{\phi}_{n_2}(\theta) ,h_{0,0020}(\theta) ) \right) \vspace{0.2cm}\\
&&+\frac{1}{\sqrt{2\ell\pi}}\psi_{n_2}^T(0)\left(\mathcal{S}_2(\phi_{n_2}(\theta) ,h_{2n_2,0011}(\theta))+\mathcal{S}_2(\overline{\phi}_{n_2}(\theta),h_{2n_2,0020}(\theta) ) \right)),
\end{array}
  \vspace{0.3cm}\\
\begin{array}{lll}
E_{32}&=&\frac{1}{\sqrt{\ell\pi}}\psi_{n_2}^T(0)\mathcal{S}_2(\phi_{n_2}(\theta),h_{0,1100}(\theta))+\frac{1}{\sqrt{2\ell\pi}}\psi_{n_2}^T(0)\mathcal{S}_2(\phi_{n_2}(\theta) ,h_{2n_2,1100}(\theta))\vspace{0.2cm}\\
&&+\frac{1}{\sqrt{2\ell\pi}} \psi_{n_2}^T(0)\left(\mathcal{S}_2(\phi_{n_1}(\theta) ,h_{n_1+n_2,0110}(\theta))+\mathcal{S}_2(\overline{\phi}_{n_1}(\theta) _{n_1+n_2,1010}(\theta) ) \right))\vspace{0.2cm}\\
&&+\delta_{n_1n_2}\psi_{n_2}^T(0)\left(\mathcal{S}_2(\phi_{n_1}(\theta) ,h_{n_2-n_1,0110}(\theta))+\mathcal{S}_2(\overline{\phi}_{n_1}(\theta) ,h_{n_2-n_1,1010}(\theta) ) \right)),
\end{array}
\end{array}
\right.
\end{equation}
where
$$
\delta_{n_1n_2}=\left\{\begin{array}{ll}
 \frac{1}{\sqrt{2\ell\pi}} ,&   n_1<n_2, \vspace{0.2cm}\\
 \frac{1}{\sqrt{\ell\pi}} , &    n_1=n_2.
\end{array}\right.
$$

\vspace{0.4cm}
\noindent
{\bf Step 4: The calculation of $\bf{ \mbox{Proj}_{S}\left(\left(D_{w,w_x,w_{xx}}f^{(1,2)}_2(z,0,0)\right)U_2^{(2,d)}(z, 0)(\theta)\right)}$}

 The calculation of $\mbox{Proj}_{S}\left(\left(D_{w,w_x,w_{xx}}f^{(1,2)}_2(z,0,0)\right)U_2^{(2,d)}(z, 0)(\theta)\right)$ is similar to that in Step 3 but is more tedious. We leave the calculation to Appendix B.

   \begin{equation}
\label{PDWF22}
\textmd{Proj}_{S}\left(\left(D_{w,w_x,w_{xx}}f^{(1,2)}_2(z,0,0)\right)U_2^{(2,d)}(z, 0)(\theta)\right)=\left(\begin{array}{c}
\mathcal{H} \left(E_{11}^dz_1^2z_2+E_{12}^dz_1z_3z_4\right)\vspace{0.1cm}\\
\mathcal{H} \left(E_{31}^dz_3^2z_4+E_{32}^dz_1z_2z_3\right)\vspace{0.1cm}\\
\end{array}\right),
 \end{equation}
where
\begin{equation}
\label{EIJD1}
\left\{
 \begin{array}{l}
\begin{array}{lll}
 E_{11}^d &=& -\frac{1}{\sqrt{\ell\pi}} \frac{n_1^2}{\ell ^2} \psi_{n_1}^T(0)\left(\mathcal{S}_2^{(d,1)}\left(\phi_{n_1}(\theta),h_{0,1100}(\theta)\right)+\mathcal{S}_2^{(d,1)}\left(\overline{\phi}_{n_1}(\theta), h_{0,2000}(\theta)\right)\right)
 \vspace{0.2cm}\\
&&+\frac{1}{\sqrt{2\ell\pi}}\psi_{n_1}^T{(0)} \sum\limits_{j=1,2,3}  b^{(1,j)}_{2n_1} \mathcal{S}_2^{(d,j)}\left(\phi_{n_1}(\theta), h_{2n_1,1100}(\theta)\right) \vspace{0.2cm}\\
&&+\frac{1}{\sqrt{2\ell\pi}}\psi_{n_1}^T{(0)} \sum\limits_{j=1,2,3}  b^{(1,j)}_{2n_1} \mathcal{S}_2^{(d,j)}\left(\overline{\phi}_{n_1}(\theta), h_{2n_1,2000}(\theta)\right),
\end{array}
\vspace{0.3cm}\\
\begin{array}{lll}
 E_{12}^d &=&-\frac{1}{\sqrt{\ell\pi}}\frac{n_1^2}{\ell ^2} \psi_{n_1}^T(0)\mathcal{S}_2^{(d,1)}\left(\phi_{n_1}(\theta), h_{0,0011}(\theta)\right)
 \vspace{0.2cm}\\
&&+\frac{1}{\sqrt{2\ell\pi}}\psi_{n_1}^T{(0)} \sum\limits_{j=1,2,3}  b^{(1,j)}_{2n_1}\mathcal{S}_2^{(d,j)}\left(\phi_{n_1}(\theta), h_{2n_1,0011}(\theta)\right) \vspace{0.2cm}\\
&&+\frac{1}{\sqrt{2\ell\pi}}\psi_{n_1}^T{(0)} \sum\limits_{j=1,2,3}  b^{(2,j)}_{n_2+n_1}  \mathcal{S}_2^{(d,j)}\left(\phi_{n_2}(\theta), h_{(n_2+n_1),1001}(\theta)\right) 
 \vspace{0.2cm}\\
&&+\frac{1}{\sqrt{2\ell\pi}}\psi_{n_1}^T{(0)} \sum\limits_{j=1,2,3}  b^{(2,j)}_{n_2+n_1} \mathcal{S}_2^{(d,j)}\left(\overline{\phi}_{n_2}(\theta), h_{(n_2+n_1),1010}(\theta)\right) \vspace{0.2cm}\\
&&+\delta_{n_1n_2}\psi_{n_1}^T{(0)} \sum\limits_{j=1,2,3}  b^{(2,j)}_{n_2-n_1} \mathcal{S}_2^{(d,j)}\left(\phi_{n_2}(\theta), h_{(n_2-n_1),1001}(\theta)\right)
\vspace{0.2cm}\\
&&+\delta_{n_1n_2}\psi_{n_1}^T{(0)} \sum\limits_{j=1,2,3}  b^{(2,j)}_{n_2-n_1} \mathcal{S}_2^{(d,j)}\left(\overline{\phi}_{n_2}(\theta), h_{(n_2-n_1),1010}(\theta)\right),
\end{array}
\end{array}
\right.
\end{equation}
and
\begin{equation}
\label{EIJD2}
\left\{
 \begin{array}{l}
\begin{array}{lll}
E_{31}^d&=& -\frac{1}{\sqrt{\ell\pi}} \frac{n_2^2}{\ell ^2} \psi_{n_2}^T(0)\left(\mathcal{S}_2^{(d,1)}\left(\phi_{n_2}(\theta), h_{0,0011}(\theta)\right)+\mathcal{S}_2^{(d,1)}\left(\overline{\phi}_{n_2}(\theta), h_{0,0020}(\theta)\right)\right)\vspace{0.2cm}\\
&&+\frac{1}{\sqrt{2\ell\pi}}\psi_{n_2}^T{(0)} \sum\limits_{j=1,2,3}  b^{(2,j)}_{2n_2} \mathcal{S}_2^{(d,j)}\left(\phi_{n_2}(\theta), h_{2n_2,0011}(\theta)\right)
 \vspace{0.2cm}\\
&&+\frac{1}{\sqrt{2\ell\pi}}\psi_{n_2}^T{(0)} \sum\limits_{j=1,2,3}  b^{(2,j)}_{2n_2} \mathcal{S}_2^{(d,j)}\left(\overline{\phi}_{n_2}(\theta), h_{2n_2,0020}(\theta)\right),
\end{array}
\vspace{0.3cm}\\
\begin{array}{lll}
E_{32}^d&=&-\frac{1}{\sqrt{\ell\pi}} \frac{n_2^2}{\ell ^2} \psi_{n_2}^T(0)\mathcal{S}_2^{(d,1)}\left(\phi_{n_2}(\theta), h_{0,1100}(\theta)\right)
 \vspace{0.2cm}\\
&&+\frac{1}{\sqrt{2\ell\pi}}\psi_{n_2}^T{(0)} \sum\limits_{j=1,2,3} b^{(2,j)}_{2n_2}\mathcal{S}_2^{(d,j)}\left(\phi_{n_2}(\theta), h_{2n_2,1100}(\theta)\right) \vspace{0.2cm}\\
&&+\frac{1}{\sqrt{2\ell\pi}}\psi_{n_2}^T{(0)} \sum\limits_{j=1,2,3}  b^{(1,j)}_{n_2+n_1} \mathcal{S}_2^{(d,j)}\left(\phi_{n_1}(\theta), h_{(n_2+n_1),0110}(\theta)\right)\vspace{0.2cm}\\
&&+\frac{1}{\sqrt{2\ell\pi}}\psi_{n_2}^T{(0)} \sum\limits_{j=1,2,3}  b^{(1,j)}_{n_2+n_1}\mathcal{S}_2^{(d,j)}\left(\overline{\phi}_{n_1}(\theta), h_{(n_2+n_1),1010}(\theta)\right)\vspace{0.2cm}\\
&&+\delta_{n_1n_2}\psi_{n_2}^T{(0)} \sum\limits_{j=1,2,3} b^{(1,j)}_{n_2-n_1} \mathcal{S}_2^{(d,j)}\left(\phi_{n_1}(\theta), h_{(n_2-n_1),0110}(\theta)\right)\vspace{0.2cm}\\
&&+\delta_{n_1n_2}\psi_{n_2}^T{(0)} \sum\limits_{j=1,2,3} b^{(1,j)}_{n_2-n_1}\mathcal{S}_2^{(d,j)}\left(\overline{\phi}_{n_1}(\theta), h_{(n_2-n_1),1010}(\theta)\right),
\end{array}
\end{array}
\right.
\end{equation}
with
$$
\begin{array}{l}
b^{(1,1)}_k=-\frac{n_1^2}{\ell ^2}, ~~b^{(1,3)}_k=-\frac{k^2}{\ell^2}, ~~k=2n_1,  n_2+n_1,  n_2-n_1,~~b^{(1,2)}_k=\left\{\begin{array}{ll}
\frac{n_1k}{\ell^2}, & k=2n_1,  n_2+n_1, \vspace{0.2cm}\\
-\frac{n_1k}{\ell^2},&  k=n_2-n_1,
\end{array}
\right.
\vspace{0.3cm}\\
b^{(2,1)}_k=-\frac{n_2^2}{\ell ^2}, ~~ b^{(2,2)}_k=\frac{n_2k}{\ell^2},~~b^{(2,3)}_k=-\frac{k^2}{\ell^2}, ~~k= 2n_2, n_2+n_1, n_2-n_1.
\end{array}
$$
Clearly, we still need to compute  $h_{n,1010}(\theta)$. We leave this tedious calculation to Appendix C.
\vspace{0.3cm}
\begin{subsubsection}
{The normal form of double Hopf bifurcation truncated to third terms}
\end{subsubsection}
   Let
$$
\begin{array}{l}
B_{11}=C_{11}+\frac{3}{2}\left(D_{11}+E_{11}+E_{11}^d\right),
~B_{12}=C_{12}+\frac{3}{2}\left(D_{12}+E_{12}+E_{12}^d\right),
\vspace{0.3cm}\\
B_{31}=C_{31}+\frac{3}{2}\left(D_{31}+E_{31}+E_{31}^d\right),
~B_{32}=C_{32}+\frac{3}{2}\left(D_{32}+E_{32}+E_{32}^d\right).
\end{array}
$$

From \eqref{NF}, \eqref{G21}, \eqref{PF31}, \eqref{PDZF21},  \eqref{PDWF221} and \eqref{PDWF22}, we have the following normal form of double Hopf bifurcation truncated to third terms:
\begin{equation}
\label{NF-DHOPF}
\dot{z}=Bz+\frac{1}{2!}\left(\begin{array}{c}
\mathcal{H} \left(\left(B^{(1)}_1\mu_1+B^{(2)}_1\mu_2\right)z_1\right)\vspace{0.2cm}\\
\mathcal{H} \left(\left(B^{(1)}_3\mu_1+B^{(2)}_3\mu_2\right)z_3\right)
\end{array}\right)+\frac{1}{3!}\left(\begin{array}{c}
\mathcal{H} \left(B_{11}z_1^2z_2+B_{12}z_1z_3z_4\right)\vspace{0.2cm}\\
\mathcal{H} \left(B_{31}z_3^2z_4+B_{32}z_1z_2z_3\right)
\end{array}\right).
 \end{equation}
With the polar coordinates
 $z_1=r_1e^{-i\Theta_1}, z_2=r_1e^{i\Theta_1}, z_3=r_2e^{-i\Theta_2}, z_4=r_2e^{i\Theta_2}$,  we have  the  following
 amplitude equations for \eqref{NF-DHOPF}:
\begin{equation}
\label{NF-DHOPF1}
   \begin{cases}\frac{dr_1}{dt}=\left(\delta_1+p_{11}r_1^2+p_{12}r_2^2\right)r_1,\vspace{0.1cm}\\
    \frac{dr_2}{dt}=\left(\delta_2+p_{21}r_1^2+p_{22}r_2^2\right)r_2,\end{cases}
 \end{equation}
where
$$
\begin{array}{l}
 \delta_1=\frac{1}{2}\mbox{Re} \left(B^{(1)}_1\mu_1+B^{(2)}_1\mu_2\right),~~\delta_2=\frac{1}{2}\mbox{Re} \left(B^{(1)}_3\mu_1+B^{(2)}_3\mu_2\right),\vspace{0.2cm}\\
p_{11}=\frac{1}{3!}\mbox{Re} \left(B_{11}\right),~p_{12}=\frac{1}{3!}\mbox{Re}\left(B_{12}\right),~p_{21}=\frac{1}{3!}\mbox{Re} \left(B_{32}\right),
~p_{22}=\frac{1}{3!}\mbox{Re} \left(B_{31}\right).
 \end{array}
$$
It follows from \cite{KUZ-BOOK1998} that depending on whether $p_{11}$
and $p_{22}$ have the same or opposite signs, there are two
essential bifurcation cases: \emph{simple} case: $p_{11}p_{22}>0$,
and \emph{difficult} case: $p_{11}p_{22}<0$.

For the \emph{simple} case or some subcases of the \emph{difficult} case, it is
sufficient to consider the normal form  truncated to three-order terms.  However, for some subcases of the \emph{difficult} case,  we have to calculate the normal form up to fifth-order terms to determine the
dynamics near the bifurcation point.

%
\begin{section}
   {Examples}
 \end{section}
 In this section, taking
$$
f(u,v)=u\left(1-\frac{u}{a}\right)-\frac{buv}{1+u},~~g(u,v)=\frac{buv}{1+u}-c v,
$$
then  \eqref{PPMODEL}  becomes the following  predator-prey model with Holling type II functional response:
\begin{equation}
\label{Exa-PP}
\left\{  \begin{array}{ll}
  \frac{\partial u(x,t)}{\partial t}=d_{11}u_{xx}(x,t)+u\left(1-\frac{u}{a}\right)-\frac{buv}{1+u}, & 0<x<\ell\pi, t>0, \vspace{0.2cm}\\
\frac{\partial v(x,t)}{\partial t}=-d_{21}(v(x,t)u_{x}(x,t-\tau))_x+d_{22}v_{xx}(x,t)-cv+\frac{buv}{1+u}, & 0<x<\ell\pi, t>0,\vspace{0.2cm}\\
u_x(0,t)=u_x(\ell\pi,t)=v_x(0,t)=v_x(\ell\pi,t)=0, & t\geq 0.
  \end{array}
  \right.
 \end{equation}

System \eqref{Exa-PP} has the  positive constant steady state $E_*(\gamma, v_\gamma)$, where
$$\gamma=\frac{c}{b-c},~~v_\gamma=\frac{(a-\gamma)(1+\gamma)}{ab},$$
provided that $b>\frac{c(1+a)}{a}$ (or equivalently, $0<\gamma<a$) holds.  For $d_{21}=0$,   $E_*(\gamma, v_\gamma)$ is asymptotically  stable for   $d_{11}\geq 0$ and $d_{22}\geq 0$ provided that $\frac{a-1}{2}< \gamma< a$. For $d_{21}>0$,  the stability and Hopf bifurcation for system \eqref{Exa-PP}  has been detailedly investigated in \cite{Song-SW-SAPM2021, Song-PZ-JDE2021}.  In what follows, we are interested in the double Hopf bifurcation induced by the spatial memory diffusion coefficient $d_{21}$ and the delay $\tau$. For this purpose, we need to investigate the critical values of $d_{21}$ and $\tau$ at which  \eqref{CE}  with \eqref{AIJ} has two pairs of purely imaginary roots. In the following, we use the same notations as in \cite{Song-SW-SAPM2021}  and  simply introduce some results from \cite{Song-SW-SAPM2021, Song-PZ-JDE2021} for the analysis.

 For  $E_*(\gamma, v_\gamma)$, we have
\begin{equation}
\label{AIJ}
\begin{array}{c}
a_{11}=\frac{\gamma(a-1-2\gamma)}{a(1+\gamma)}\left\{\begin{array}{ll}
\leq 0, & \frac{a-1}{2}\leq \gamma< a,\vspace{0.2cm}\\
>0, & 0<\gamma<\frac{a-1}{2},
\end{array}\right.
 \vspace{0.2cm}\\
 a_{12}=-c<0, ~~a_{21}=\frac{a-\gamma}{a(1+\gamma)}>0,   ~~a_{22}=0.
\end{array}
\end{equation}

%
%
%

  
%
Let
\begin{equation}
\label{OMGE}
\omega^4+P_n\omega^2+Q_n=0,
\end{equation}
where
\begin{equation}
\label{PN}
 P_n=\left(d_{11}^2+d_{22}^2\right)(n/\ell)^4-2\left(d_{11}a_{11}+d_{22}a_{22}\right)(n/\ell)^2
+a_{11}^2+a_{22}^2+2a_{12}a_{21},
 \end{equation}
 and
 \begin{equation}
\label{QN}
 Q_n=\widetilde{J}_n(0)\left(J_n+d_{21}v_*a_{12}(n/\ell)^2\right).
\end{equation}
It follows from \cite{Song-SW-SAPM2021}  that
if Eq.\eqref{OMGE} has a positive roots $\omega_n^+$ (or $\omega_n^-$), then 
Eq.\eqref{CE}  has a pair of purely imaginary roots  $\pm\omega_n^+i$ (or $\pm\omega_n^-i$) at $\tau=\tau_{n, j}^+$  (or $\tau=\tau_{n, j}^-$), where
\begin{equation}
\label{OMGN}
\omega_n^\pm=\sqrt{\frac{-P_n\pm \sqrt{\Delta_n}}{2}}.
\end{equation}
and
\begin{equation}
\label{TAUNJ}
\tau_{n, j}^\pm=\frac{1}{\omega_n^\pm}\left\{\arccos \left\{\frac{J_n-\left(\omega_n^\pm\right)^2}{d_{21}v_*a_{12}(n/\ell)^2}\right\}+2j\pi\right\}, ~~j\in\mathbb{N}_0,~n\in\mathbb{N}.
\end{equation}

The number of the positive root of  Eq.\eqref{OMGE} depends on the signs of $P_n, Q_n$ and 
$$\Delta_n=P_n^2-4Q_n=T_n^4-4T_n^2J_n+4d_{21}^2v_*^2a_{12}^2(n/\ell)^4.$$

Define
\begin{equation}
\label{d21n}
d_{21}^{(n)}  =\frac{1}{v_*|a_{12}|}\left( d_{11}d_{22} (n/\ell)^2 +\frac{Det(A)}{(n/\ell)^2}-\left( d_{11}a_{22}+d_{22}a_{11} \right)\right),
\end{equation}
 \begin{equation}
\label{d21starN}
d _{21}^{*(n)}=\frac{\sqrt{4T_n^2J_n-T_n^4 }}{2v_*|a_{12}|(n/\ell)^2}.
\end{equation}
Then, for fixed $n$, it follows from \cite{Song-SW-SAPM2021}  that
 $Q_n>0$ if and only if $0<d_{21}<d_{21}^{(n)}$, 
and   $ \Delta_n>0$ for\ $4J_n\leq T_n^2$ if and only if $d_{21}>d_{21}^{*(n)} $.
\begin{subsection}
{Dynamics near the double Hopf bifurcation point with the same spatial profile}
\end{subsection}

 Taking  the parameters as follows
 \begin{equation}
 \label{PMSFORcase1}
 a=1, b=9, c=3, d_{11}=0.6, d_{22}=0.8,~\ell=2,
 \end{equation}
we have $(u_*, v_*)=(1/2,  1/12)$ and  
$$
a_{11} =-\frac{1}{3}, a_{12} =-3, a_{21} =\frac{1}{3},  a_{22}=0.
$$
From \eqref{PN}, we have   
\begin{equation}
\label{NSPN}
P_n=\frac{1}{16}n^4 +\frac{1}{10}n^2 - \frac{17}{9}
 \left\{\begin{array}{ll}
 <0, & n=1,2, \vspace{0.2cm}\\
 >0, & n=3, 4, \cdots.
 \end{array}
 \right.
 \end{equation}
  By \eqref{d21n}, we have 
$$d_{21}^{(n)}=\frac{12n^2}{25}+\frac{16}{n^2}+\frac{16}{15},$$  
 from which it is easy to verify that
$$
d_{21}^{(2)}=\frac{524}{75} < d_{21}^{(3)}=\frac{1612}{225} <d_{21}^{(4)}=\frac{731}{75}<d_{21}^{(1)}=\frac{1316}{75}<d_{21}^{(5)}=\frac{1028}{75}<d_{21}^{(6)}=\frac{4228}{225},
$$
and  
\begin{equation}
\label{Md21star2}
\min\limits_{n\in \mathbb{N}}\left\{d_{21}^{(n)} \right\}=d_{21}^{(2)}=\frac{524}{75} \doteq 6.9867, ~ d_{21}^{(n)}<d_{21}^{(n+1)}  ~\mbox{for any}~  n\geq 5.
\end{equation}

When $0\leq d_{21} < d_{21}^{*(2)}$,   the stability of the positive constant steady state $(u_*, v_*)$  is independent of the delay and we  have the following stability result. 
 \begin{proposition}
 \label{Pro-1}
For system \eqref{Exa-PP}  with the parameters $a=1, b=9, c=3, d_{11}=0.6, d_{22}=0.8,~\ell=2$,
when $0\leq d_{21} < d_{21}^{*(2)} =\frac{927}{134}$,  the positive constant steady state $(u_*, v_*)=(1/2,  1/12)$     is locally  asymptotically stable   for any $\tau\geq 0$;
  \end{proposition}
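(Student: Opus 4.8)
The plan is to prove that the characteristic equation $\prod_{n\in\mathbb N_0}\Gamma_n(\lambda)=0$, with $\Gamma_n$ as in \eqref{CE}, has all roots with negative real part for every $\tau\ge0$. First I would record three $\tau$-independent facts for the data \eqref{PMSFORcase1}, \eqref{AIJ}: (i) $\mathrm{Tr}(A)=a_{11}+a_{22}=-\tfrac13<0$, so $T_n=\mathrm{Tr}(A)-(d_{11}+d_{22})(n/\ell)^2<0$ for all $n\in\mathbb N_0$; (ii) $\det(A)=-a_{12}a_{21}=1$ and $d_{11}a_{22}+d_{22}a_{11}=-\tfrac{4}{15}<0$, so $J_n=d_{11}d_{22}(n/\ell)^4-(d_{11}a_{22}+d_{22}a_{11})(n/\ell)^2+\det(A)>0$ for all $n$, whence $\Gamma_n(0)=\widetilde J_n(0)=J_n+d_{21}v_*|a_{12}|(n/\ell)^2>0$ and $\lambda=0$ is never a root; (iii) at $\tau=0$ each $\Gamma_n$ is $\lambda^2-T_n\lambda+\widetilde J_n(0)=0$ with $T_n<0$ and $\widetilde J_n(0)>0$, so by Routh--Hurwitz all roots lie in the open left half-plane when $\tau=0$.

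Next I would invoke the continuous dependence of the roots of each $\Gamma_n$ on $\tau$, together with the fact that only finitely many roots of the whole family $\{\Gamma_n\}_n$ lie in any half-plane $\{\mathrm{Re}\,\lambda\ge-\varepsilon\}$: the number of roots in $\overline{\mathbb C_+}$ changes only when a root of some $\Gamma_n$ crosses the imaginary axis, and since $\lambda=0$ is excluded this would mean a pair $\pm i\omega$ ($\omega>0$) of purely imaginary roots of some $\Gamma_n$, hence --- via the usual separation of real and imaginary parts leading to \eqref{OMGE} --- a positive root $s=\omega^2$ of $\omega^4+P_n\omega^2+Q_n=0$. So the problem reduces to showing: for $0\le d_{21}<d_{21}^{*(2)}$ the quadratic $s^2+P_ns+Q_n=0$ has no positive root, for every $n\in\mathbb N_0$. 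I would then treat three mode ranges. For $n=0$: $P_0=a_{11}^2+a_{22}^2+2a_{12}a_{21}=-\tfrac{17}{9}$, $Q_0=\det(A)^2=1$ and $P_0^2-4Q_0<0$, so there is no real root. For $n=1,2$: by \eqref{NSPN} $P_n<0$, so the quadratic has a positive root unless its discriminant $\Delta_n=P_n^2-4Q_n$ is negative; since $T_n^2<4J_n$ for $n=1,2$ (which I would check directly), $\Delta_n<0\iff d_{21}<d_{21}^{*(n)}$, and this is secured by $d_{21}<d_{21}^{*(2)}\le d_{21}^{*(1)}$ --- note that $\Delta_n<0$ automatically forces $Q_n>0$, so no separate comparison with $d_{21}^{(1)},d_{21}^{(2)}$ is needed. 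For $n\ge3$: by \eqref{NSPN} $P_n>0$, so $s^2+P_ns+Q_n=0$ has no positive root once $Q_n\ge0$, i.e.\ $d_{21}\le d_{21}^{(n)}$; by \eqref{Md21star2} $\min_{n\ge3}d_{21}^{(n)}=d_{21}^{(3)}$, so it suffices that $d_{21}^{*(2)}\le d_{21}^{(3)}$.

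Combining the three cases, no $\Gamma_n$ has an imaginary-axis root for any $\tau\ge0$ when $0\le d_{21}<d_{21}^{*(2)}$; together with $\lambda=0$ never being a root and with stability at $\tau=0$, all roots of $\prod_n\Gamma_n$ stay in the open left half-plane for every $\tau\ge0$, which is the asserted local asymptotic stability of $(u_*,v_*)=(1/2,1/12)$.

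The only non-bookkeeping content is the two numerical comparisons $d_{21}^{*(2)}\le d_{21}^{*(1)}$ and $d_{21}^{*(2)}\le d_{21}^{(3)}$, obtained by evaluating the closed forms \eqref{d21starN} and \eqref{d21n} at $n=1,2,3$; these are precisely what single out $d_{21}^{*(2)}$ (the threshold in the statement) as the binding one among \emph{all} spatial modes. I expect this verification --- elementary in itself --- to be the real crux, since the structural part (reduction to \eqref{OMGE} and the sign analysis of $P_n,Q_n,\Delta_n$) is routine once the three $\tau$-independent facts above are in hand.
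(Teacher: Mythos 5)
Your proposal is correct and follows essentially the same route as the paper's proof: reduce to the absence of positive roots of \eqref{OMGE} for every spatial mode, use $\Delta_n<0$ (secured by $d_{21}<d_{21}^{*(2)}\le d_{21}^{*(1)}$) for the modes $n=1,2$ where $P_n<0$, and use $P_n>0$ together with $Q_n\ge 0$ for $n\ge 3$. The only cosmetic differences are that you handle the high modes by comparing $d_{21}^{*(2)}$ with $d_{21}^{(3)}$ rather than with $d_{21}^{(2)}$ as the paper does, and that you spell out the $n=0$ mode and the $\tau=0$ Routh--Hurwitz/continuity argument that the paper leaves implicit.
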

\begin{proof} 
From \eqref{QN}  and \eqref{Md21star2},  it follows that when 
$d_{21}<d_{21}^{(2)}$,  $Q_n>0$ for any $n\in \mathbb{N}$. This, together with \eqref{NSPN}, implies that 
when $d_{21}<d_{21}^{(2)}$, Eq.\eqref{OMGE} has no positive root for  $n=3, 4, \cdots$.

From \eqref{d21starN}, we have 
$$d_{21}^{*(2)}=\frac{927}{134} \doteq 6.9179<  d_{21}^{*(1)}=\frac{6851}{633}. 
$$
Notice that  
$$d_{21}^{*(2)} =\frac{927}{134}  \doteq 6.9179 <d_{21}^{(2)}=\frac{524}{75}\doteq 6.9867,$$
which implies that 
 for 
 $0< d_{21}<d_{21}^{*(2)}  \doteq 6.9179$, $\Delta_1<0$ and $\Delta_2<0$.  Thus, for $d_{21}<d_{21}^{*(2)}$, Eq.\eqref{OMGE} has no positive root for  $n=1, 2$. 
 
Combining the above discussion, we can conclude that for  $0< d_{21}<d_{21}^{*(2)}$, Eq.\eqref{OMGE} has no positive root for any $n\in\mathbb{N}$  and then  the positive constant steady state $(u_*, v_*)=(1/2,  1/12)$ is asymptotically stable for any $\tau\geq 0$.  
\end{proof}

 When   $d_{21} > d_{21}^{*(2)} \doteq 6.9179$,  the stability of the positive constant steady state $(u_*, v_*)$  is related to the delay.   Fig.\ref{Fig1}(a) illustrates the stability region and Hopf bifurcation curves in the $d_{21}-\tau$ plane for $6.9\leq d_{21}\leq 7 $ and $0\leq\tau\leq 20$.  
 
 When $d_{21}^{*(2)} \doteq 6.9179<d_{21} < d_{21}^{(2)}\doteq 6.9867 $,  Eq.\eqref{OMGE} has two positive roots for  $n=2$ and no positive roots for $n\in\mathbb{N}$ and $n\neq 2$. Thus, the characteristic equation \eqref{CE} has two sequences of purely imaginary roots $\pm i \omega_2^{\pm}$ at $\tau=\tau_{2,j}^\pm$.  System \eqref{Exa-PP}  undergoes Hopf bifurcations at $\tau=\tau_{2, j}^\pm$, as shown in  Fig.\ref{Fig1}(b). Hopf bifurcation curves $ \tau=\tau_{2,1}^+$  and $\tau=\tau_{2,0}^-$ intersect at the point $P_1(6.9618, 13.1290)$, which is the double Hopf bifurcation point with $\omega_2^-\doteq 0.2222$ and $\omega_2^+\doteq 0.6629$.  This double Hopf bifurcation arises from  the interaction of two Hopf bifurcations with the same mode-2. 
 \begin{figure}[htp]
\centering
{\includegraphics[scale=0.2]{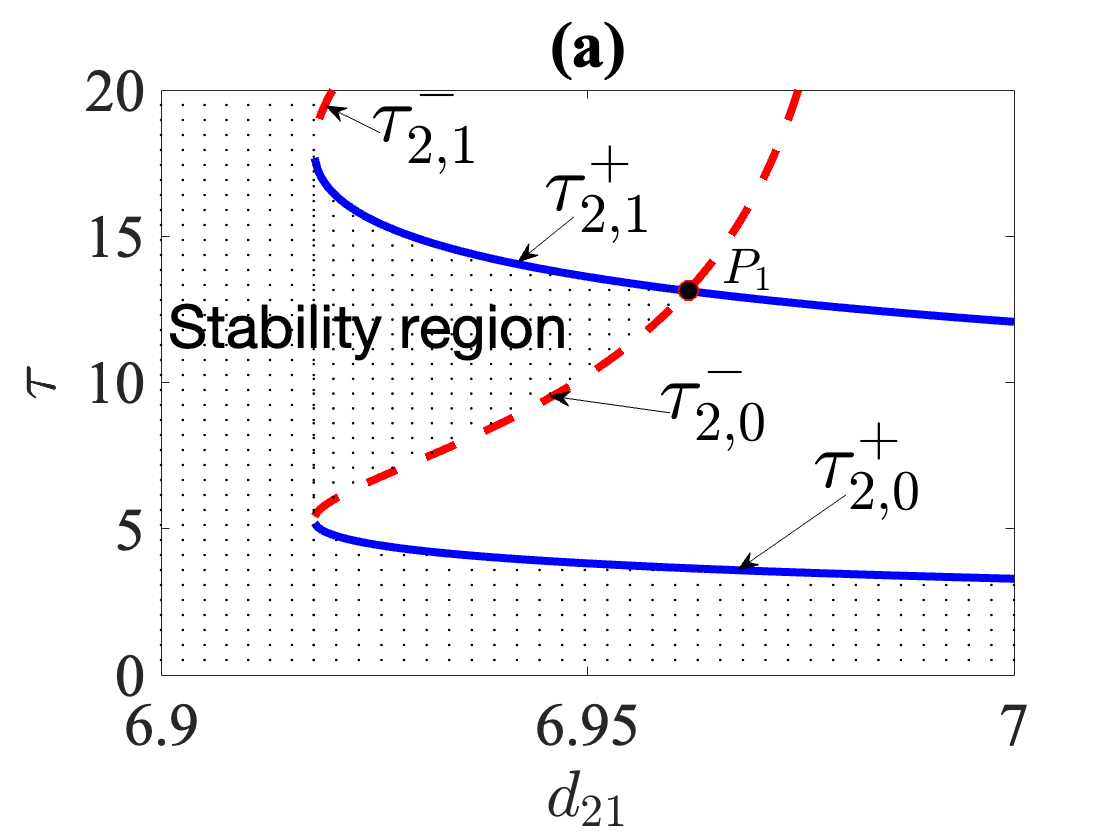}}
{\includegraphics[scale=0.2]{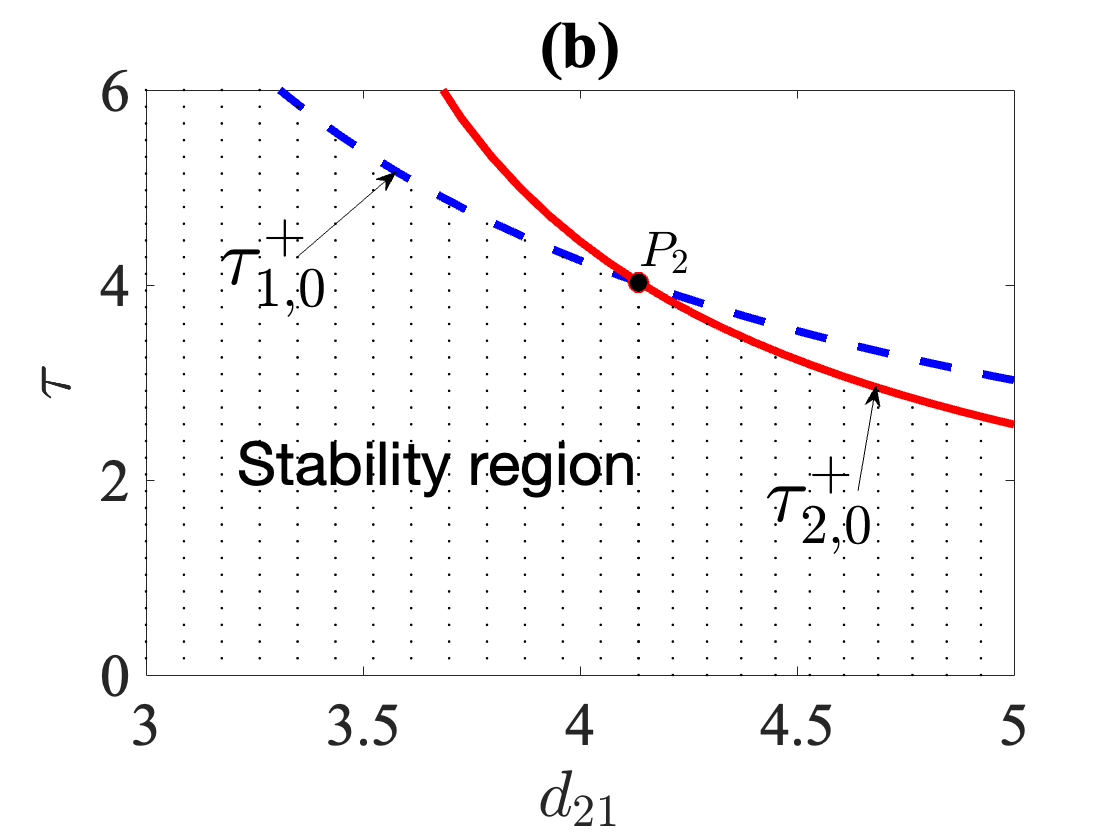}}
 \caption{ Stability and bifurcation curves in $d_{21}$-$\tau$ plane.  (a)  for the  parameters in \eqref{PMSFORcase1};   (b)  for the  parameters in \eqref{PMSFORcase2}.}
 \label{Fig1}
\end{figure}
 For this double Hopf bifurcation point $P_1(6.9618, 13.1290)$,  it follows from the normal form theory derived in Section \ref{Sec-NFDhopf}  with $n_1=n_2=2, d_{21}^c=6.9618, \tau_c=13.1290$,   $\omega_1=\omega_2^-$ and $\omega_2=\omega_2^+$.   The normal form truncated to the third order terms is
\begin{equation}\label{NUMNF1}
\begin{cases}
\dot{r}_1=r_1\left(-4.1681\times 10^{-4}  \mu_1+0.1332 \mu_2-0.1428 r_1^2+6.003 r_2^2\right),\vspace{0.1cm}\\
\dot{r}_2=r_2\left(0.0036\mu_1+0.1311\mu_2-5.4981 r_1^2-2.2507r_2^2\right).
\end{cases}
\end{equation}
 Since $r_1, r_2\geq 0$, system
\eqref{NUMNF1} has a zero equilibrium  $E_0(0,0)$ for any
$\mu_1,\mu_2\in \mathbb{R}$,  two boundary equilibria:
$$E_1\left(\sqrt{0.9331\mu_2 - 0.0029\mu_1},~0\right),~~\mu_1<319.6009\mu_2,$$
$$E_2\left(0,~\sqrt{0.0016\mu_1+0.0583\mu_2}\right),~\mu_1>-36.4988\mu_2,$$
and one interior positive equilibrium:
$$E_3\left(\sqrt{6.1905\times 10^{-4}\mu_1 + 0.0326\mu_2},
~\sqrt{0.8416\times 10^{-4} \mu_1 - 0.0214\mu_2}\right)$$
for
$$  \mu_1>\max\left\{-52.6919\mu_2,   254.4824\mu_2\right \}.$$

By analyzing the stability of these equilibria and noticing $\mu_1=\tau-\tau_c, \mu_2=d_{21}-d_{21}^c$, it is easy to obtain the phase portrait and their dynamical topologies as shown in Fig.\ref{Fig2}(a), where the curves $H_1, H_2, L_1, L_2$ are defined by the following:
\begin{equation}
\label{case1-HLJ}
\begin{array}{ll}
H_1:   ~\tau-\tau_c=319.6\left(d_{21}-d_{21}^c\right), &
H_2: ~ \tau-\tau_c=-36.4988\left(d_{21}-d_{21}^c\right),\vspace{0.2cm}\\
L_1: ~\tau-\tau_c=254.4824\left(d_{21}-d_{21}^c\right), d_{21}>d_{21}^c, &
L_2: ~\tau-\tau_c=-52.6919\left(d_{21}-d_{21}^c\right), d_{21}<d_{21}^c.
\end{array}
\end{equation}
 
 The straight lines $H_j, L_j, j=1,2$, divide   the vicinity of the double Hopf bifurcation point $P_1$ into six regions.  For each region, the corresponding phase portrait is plotted in the right side of Fig.\ref{Fig2}.

\begin{figure}[htp]
\centering
{\includegraphics[scale=0.2]{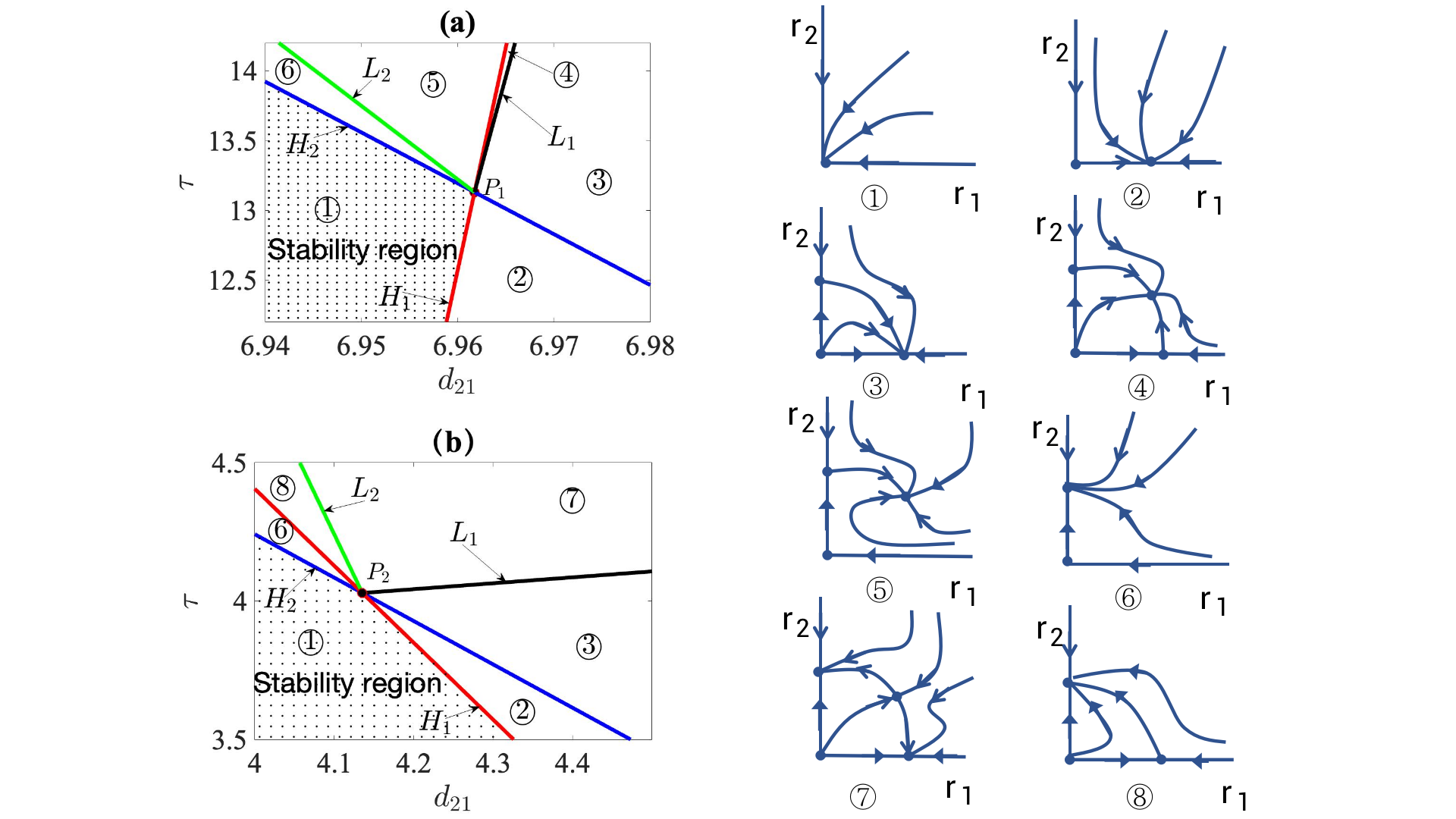}}
 \caption{The left column: partition of the region near the double Hopf bifurcation point:  (a)  partition of the region near the  point $P_1$ in Fig.\ref{Fig1}(a) with $H_j, L_j, j=1,2$,  being defined by \eqref{case1-HLJ} ;     (b) partition of the region near the  point $P_2$ in Fig.\ref{Fig1}(b) with $H_j, L_j, j=1,2$,  being defined by \eqref{case2-HLJ}.  The right column:  phase portraits for different parameter regions in the left figures. }
 \label{Fig2}
\end{figure}
 These phase portraits Fig.\ref{Fig2}  show that  for the zero equilibrium  $E_0$ of\eqref{NUMNF1}, it  is stable in Region $\textcircled{1}$  and unstable in other regions. 
Notice that  the zero equilibrium  $E_0$ of\eqref{NUMNF1} 
 corresponds to the positive equilibrium $E_*$ of
the original  system \eqref{PPMODEL}.    For $(d_{21}, \tau)=(6.96, 12.5)$ in Region  $ \textcircled{1}$,  numerical simulations in Fig.\ref{FigNS-R1}  show the stability of the positive equilibrium $E_*$ of the original  system\eqref{PPMODEL}. 

\begin{figure}[htp]
\begin{center}
\includegraphics[scale=0.2]{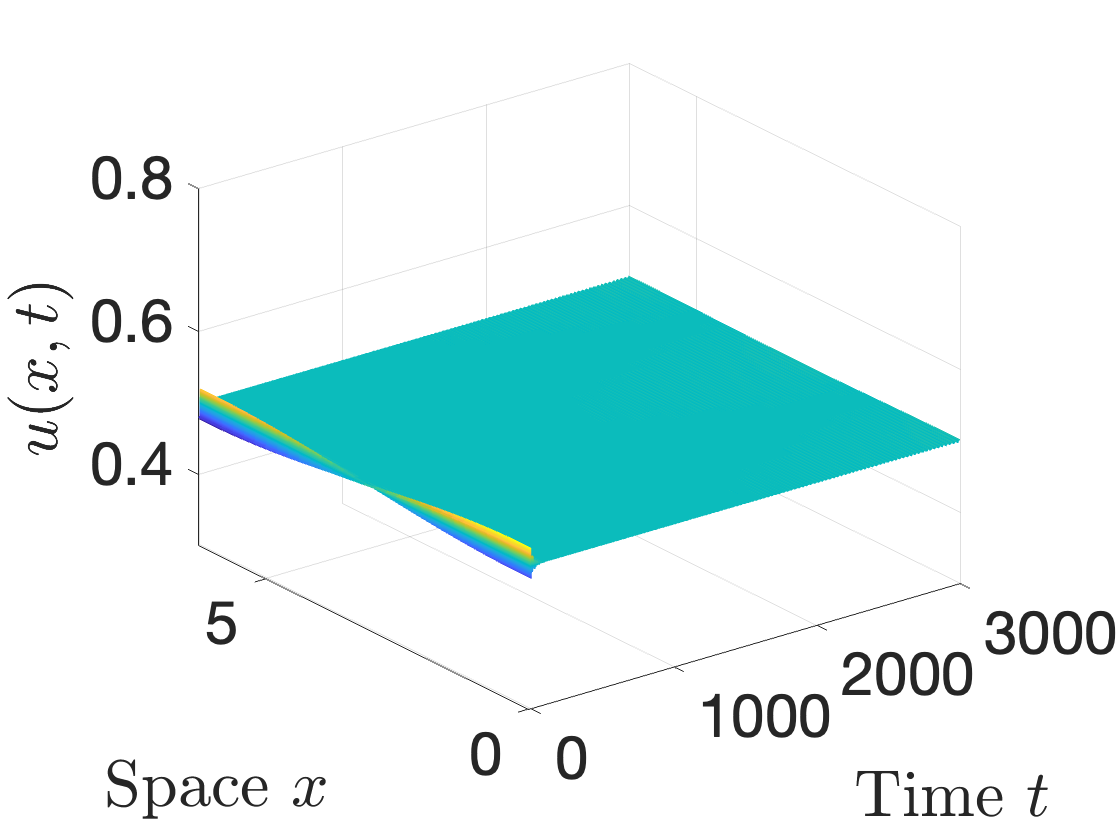}
\includegraphics[scale=0.2]{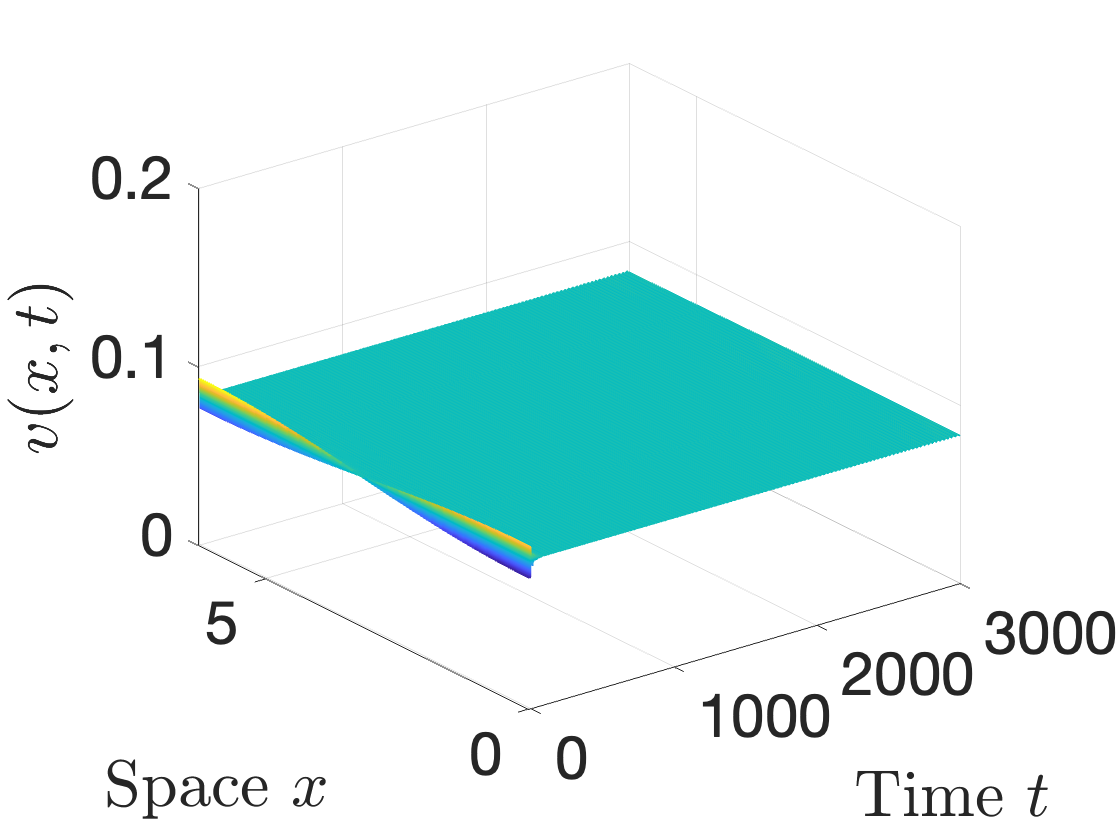}
\end{center}
 \caption{For $(d_{21}, \tau)=(6.95, 12.5)$ in Region  $ \textcircled{1}$,    the positive
equilibrium  $E_*$  of \eqref{PPMODEL}  is asymptotically stable.}
 \label{FigNS-R1}
\end{figure}
The boundary equilibria  $E_1$ and $E_2$ of \label{NUMNF}  correspond to the periodic solution of the original  system \eqref{PPMODEL}. From  the phase portraits \textcircled{2}, \textcircled{3}  and \textcircled{6} in Fig.\ref{Fig2},   it is shown that near the  neighbourhood of the point $P_1$, there exist two types of  stable periodic  solutions, respectively, bifurcating from the Hopf bifurcation at $\tau=\tau_{2,0}^-$ and $\tau=\tau_{2,1}^+$.  Figs.\ref{FigNS-R2} and \ref{FigNS-R6}   
numerically illustrate these two types of periodic solutions \eqref{PPMODEL}  bifurcating from $\tau_{2,0}^-$ and $\tau_{2,1}^+$ for the parameters $(d_{21}, \tau)$ in Regions $\textcircled{2}$ and $\textcircled{6}$, respectively. 
 
\begin{figure}[htp]
\begin{center}
\includegraphics[scale=0.2]{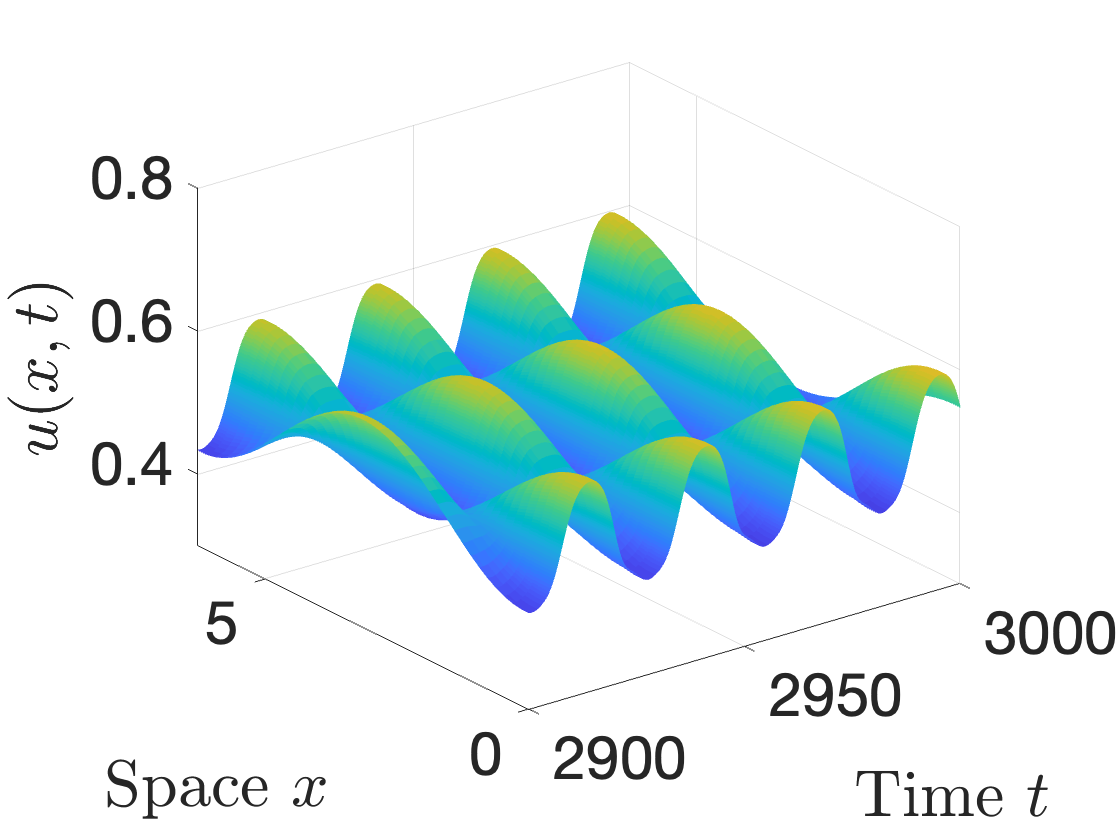}
\includegraphics[scale=0.2]{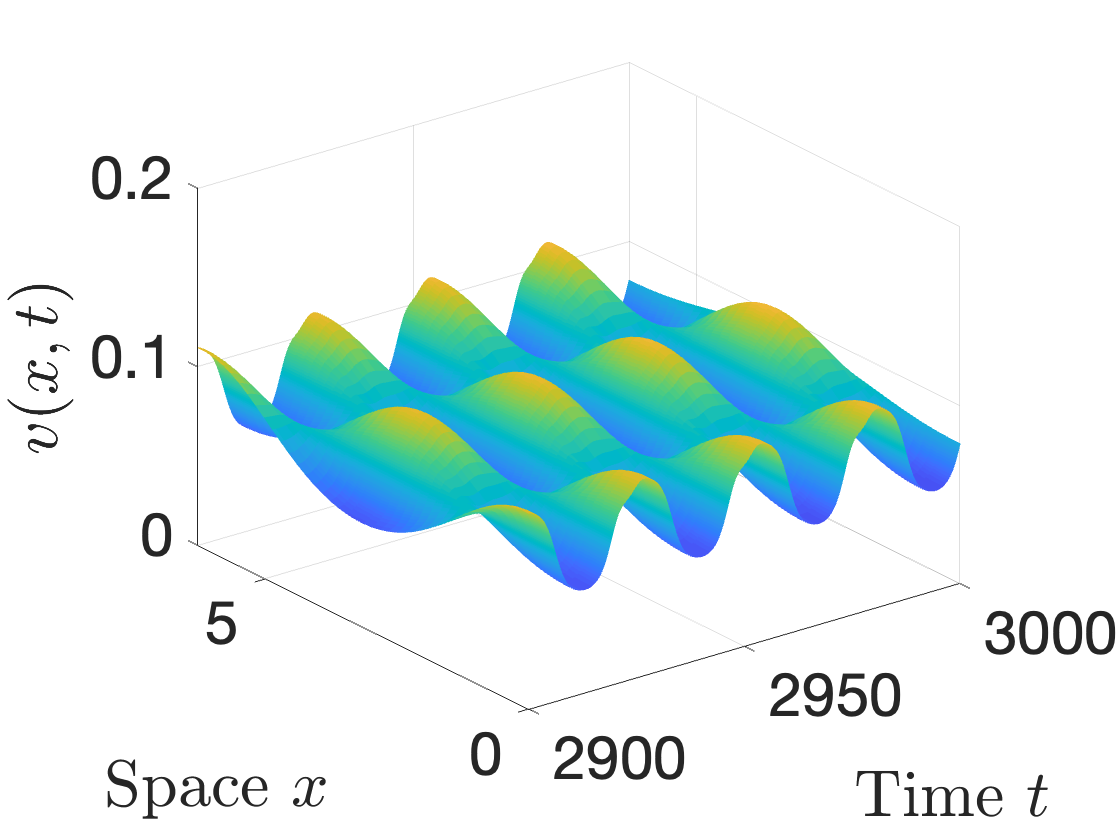}
\end{center}
 \caption{For $(d_{21}, \tau)=(6.96, 12.5)$ in Region  $ \textcircled{2}$,    the positive
equilibrium  $E_*$  is unstable  and the  periodic solution bifurcating from the Hopf bifurcating at $\tau_{2,0}^-$  appears. The initial value is chosen as $u(x, 0)=u_*+0.005\cos(x), v(x,0)=v_*-0.005\cos (x)$. }
 \label{FigNS-R2}
\end{figure}
\begin{figure}[htp]
\begin{center}
\includegraphics[scale=0.2]{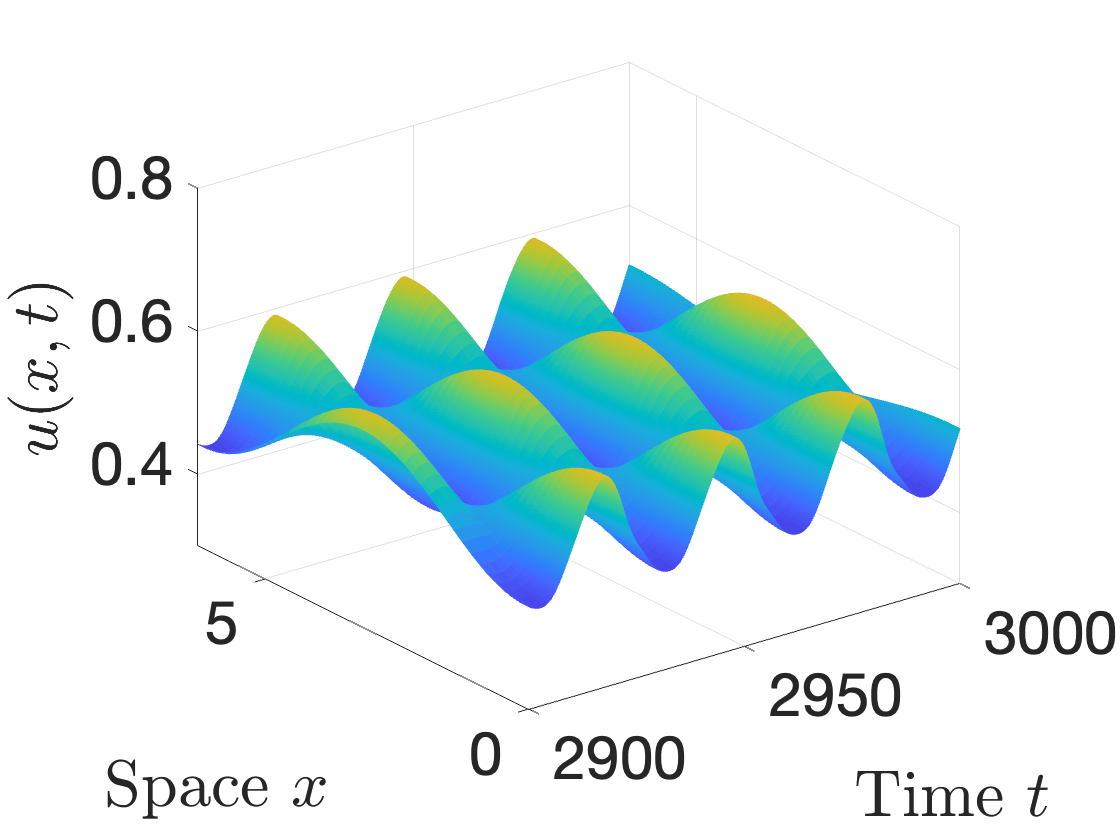}
\includegraphics[scale=0.2]{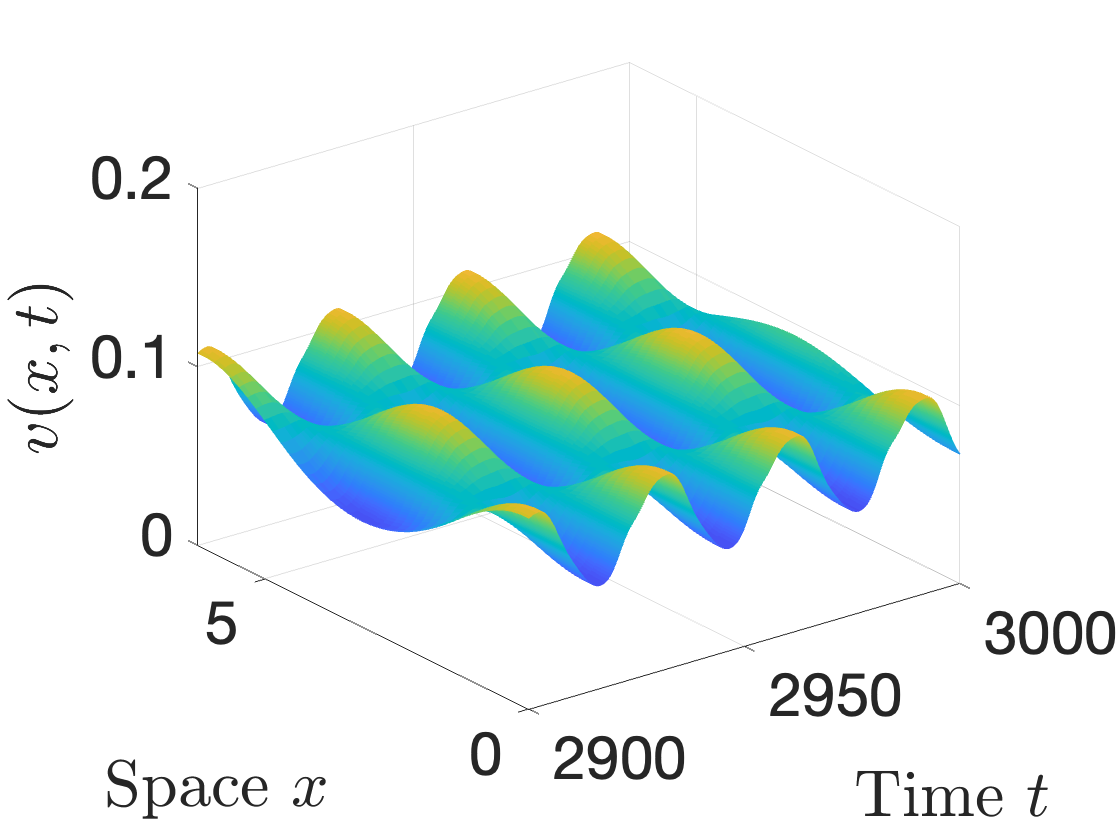}
\end{center}
 \caption{For $(d_{21}, \tau)=(6.945, 13.9)$ in Region  $ \textcircled{6}$,    the positive
equilibrium  $E_*$  is unstable  and the periodic solution bifurcating from the Hopf bifurcating at $\tau_{2,1}^+$ appears. The initial value is chosen as $u(x, 0)=u_*+0.01\cos(x), v(x,0)=v_*-0.01\cos (x)$. }
 \label{FigNS-R6}
\end{figure}

 The positive equilibrium $E_3$  correspond to the quasi-periodic solution of  the original  system\eqref{PPMODEL}, which exist for  $(d_{21}, \tau)$ in  Regions \textcircled{4}  and \textcircled{5} of Fig.\ref{Fig2}(a).   Figs.\ref{FigNS-R51}(a) and \ref{FigNS-R51}(d) numerically illustrate this quasi-periodic solution for  $(d_{21}, \tau)$ in Region $\textcircled{5}$.  Figs.\ref{FigNS-R51}(b) and \ref{FigNS-R51}(e) are  the truncated curves of Figs.\ref{FigNS-R51}(a) and \ref{FigNS-R51}(d), respectively,  for fixed space $x=\pi/5$, and  Figs.\ref{FigNS-R51}(c) and \ref{FigNS-R51}(f)  are the truncated curves of Figs.\ref{FigNS-R51}(a) and \ref{FigNS-R51}(d),  respectively, for fixed time $t=3000$.  Fig.\ref{FigNS-R52}  illustrates the phase portrait of $u(x, t)$ and $v(x,t)$ in the $u$-$v$ plane for fixed space $x=\pi/5$,  which looks like a “bird”. 
 
\begin{figure} 
\begin{center}
\includegraphics[scale=0.18]{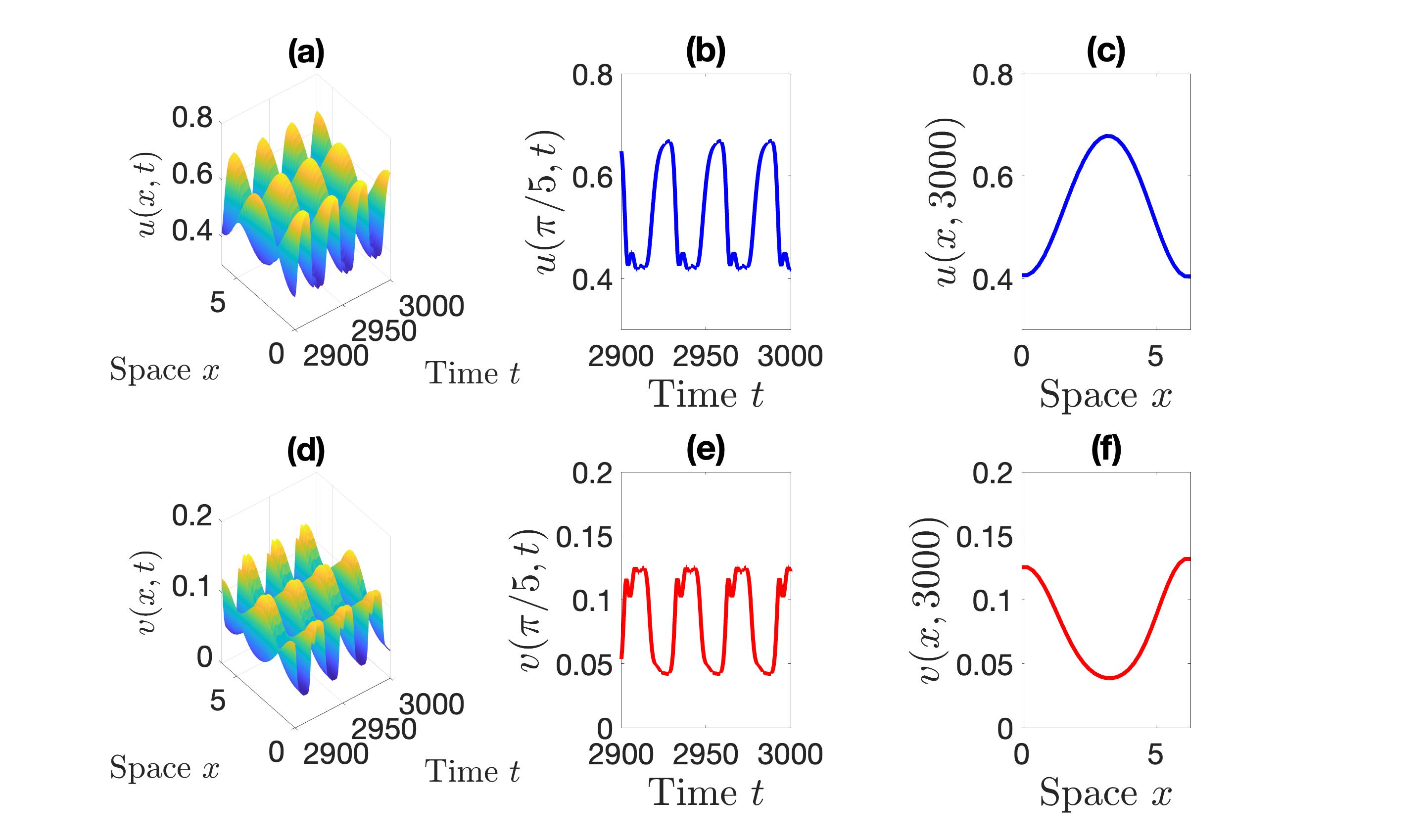}
\end{center}
 \caption{For $(d_{21}, \tau)=(6.95, 14)$ in Region  $ \textcircled{5}$,   the positive
 equilibrium  $E_*$  is unstable and  there exists the stable quasi-periodic solution with   the spatial profile like $\cos(x)$. $(a)$ and $(d)$: the spatiotemporal dynamics of the prey  $u$ and predator $v$; $(b)$ and $(e)$:  the truncated curves of  $(a)$ and $(d)$ for fixed space $x=\pi/5$ showing the evolution of $u$ and $v$ in time; $(c)$ and $(f)$: the truncated curves of  $(a)$ and $(d)$ for fixed time $t=3000$ showing the spatial profiles of $u$ and $v$. The initial value is chosen as $u(x, 0)=u_*+0.02\cos(x), v(x,0)=v_*+0.01\cos (x)$.  }
 \label{FigNS-R51}
\end{figure}


\begin{figure}[htp]
\begin{center}
\includegraphics[scale=0.3]{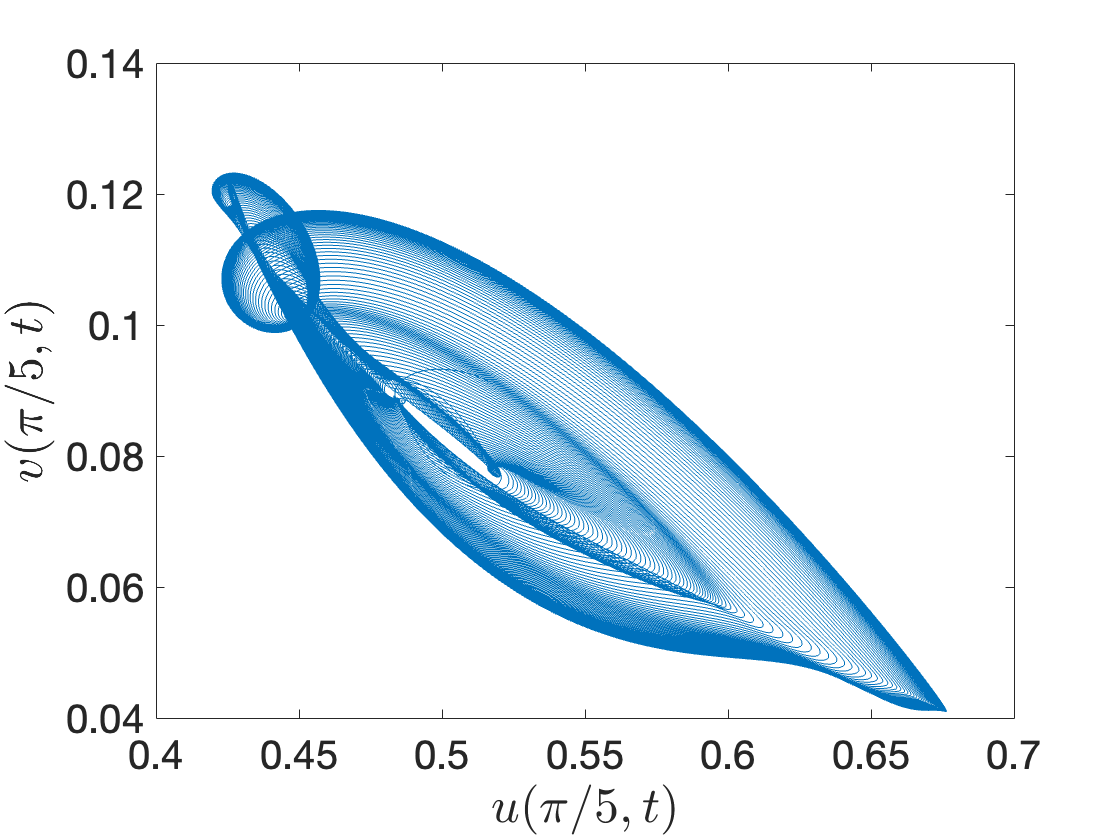}
\end{center}
 \caption{ For $(d_{21}, \tau)=(6.95, 14)$ in Region  $ \textcircled{5}$ and  fixed space $x=\pi/5$,   the evolution of the dynamics of the prey  $u$ and predator $v$  in the $u$-$v$ plane.}
 \label{FigNS-R52}
\end{figure}

\begin{subsection}
{Dynamics near the double Hopf bifurcation point with different  spatial modes}
\end{subsection}
Taking the same parameters as used in \cite{Song-PZ-JDE2021}: 
\begin{equation}
\label{PMSFORcase2}
a=1, ~b=\frac{3}{10}, ~c=\frac{1}{10}, ~d_{11}=\frac{6}{10}, ~d_{22}=\frac{8}{10},~\ell=2,
\end{equation}
we have $(u_*, v_*)=(1/5, 5/2)$.   For the dynamical classification near the double Hopf bifurcation point, we introduce the results of linear analysis from  \cite{Song-PZ-JDE2021},   as shown in Fig.\ref{Fig1}(b).  In Fig.\ref{Fig1}(b),  Hopf bifurcation curves $\tau=\tau_{1,0}^+$  and $\tau=\tau_{2,0}^+$ intersect at the point $P_2(4.1350,4.0276)$, which is the double Hopf bifurcation point.  This double Hopf bifurcation arises from  the interaction of  spatially inhomogeneous Hopf bifurcations with mode-$1$ and mode-$2$. For this point $P_2$, we can obtain $\omega_1^+ \doteq 0.2671, \omega_2^+\doteq 0.3666 $.

  To investigate the dynamics near this point $P_2$, 
we need to calculate  the corresponding normal form.  Setting $(d_{21}^c, \tau_c)=(13.4531,  0.6811)$,  $\omega_1=\omega_1^+$ and $\omega_2=\omega_2^+$,  and then employing the algorithm developed in Section 2, we have the  following normal form for this  double Hopf bifurcation point $P_2$  \begin{equation}\label{NUMNF2}
\begin{cases}
\dot{r}_1=r_1\left(0.0781\mu_1+0.1224\mu_2-1.4203 r_1^2-4.2174 r_2^2\right),\vspace{0.1cm}\\
\dot{r}_2=r_2\left(0.0595\mu_1+0.1653\mu_2-1.8176 r_1^2-2.3315r_2^2\right).
\end{cases}
\end{equation}
System \eqref{NUMNF2} has the similar dynamics to \eqref{NUMNF1}.  The  dynamical classification of \eqref{NUMNF2} is plotted in  
Fig.\ref{Fig1}(b), where 
\begin{equation}
\label{case2-HLJ}
\begin{array}{ll}
H_1:   ~\tau-\tau_c=-2.7794\left(d_{21}-d_{21}^c\right), &
H_2: ~ \tau-\tau_c=-1.5672\left(d_{21}-d_{21}^c\right),\vspace{0.2cm}\\
L_1: ~\tau-\tau_c=0.2140\left(d_{21}-d_{21}^c\right), d_{21}>d_{21}^c, &
L_2: ~\tau-\tau_c=-5.991\left(d_{21}-d_{21}^c\right), d_{21}<d_{21}^c.
\end{array}
\end{equation}

It follows from Fig.\ref{Fig2}(b) and the corresponding phase portraits that  there exist  stable  periodic solutions with spatial profile like $\cos(x)$ and $\cos(x/2)$, respectively,   for $\left(d_{21}, \tau\right)$ in Region \textcircled{2} and Region \textcircled{6}.   For $\left(d_{21}, \tau\right)$ in Region \textcircled{3}, there exist  a connection orbit from  the  periodic solutions with  spatial profile like $\cos(x)$ to the one spatial profile like $\cos(x/2)$,  but in the reverse direction for $\left(d_{21}, \tau\right)$ in Region \textcircled{8}.

However,  for $\left(d_{21}, \tau\right)$  in Region \textcircled{7} of Fig.\ref{Fig2}(b), there exists  a bistaiblity phenomenon, i.e., the coexistence of  two types of stable  periodic solutions with  spatial profile like $\cos(x)$  and  $\cos(x/2)$.  Taking $(d_{21}, \tau)=(4.4, 4.3)$ in Region  $ \textcircled{4}$,  Figs.\ref{Fig8} and \ref{Fig9} numerically illustrate this  bistaiblity phenomenon. With the same parameter $(d_{21}, \tau)=(4.4, 4.3)$ and different initial values,   Figs.\ref{Fig8}(a) and \ref{Fig8}(b) show that the solution $(u, v)$ finally converges to the  periodic solutions with  spatial profile like $\cos(x/2)$ for the initial values $u(x, 0)=u_*+0.1\cos(x/2), v(x,0)=v_*+0.1\cos (x/2)$ ,   but Figs.\ref{Fig8}(c) and \ref{Fig8}(d)  show that the solution $(u, v)$ finally converges to the  periodic solutions with  spatial profile like $\cos(x)$ for  the initial values  $u(x, 0)=u_*+0.1\cos(x), v(x,0)=v_*+0.1\cos (x)$.
For fixed space $x=\pi/5$,   Fig.\ref{Fig9}  numerically illustrate the  orbit of $(u, v)$ in the $u$-$v$ plane.
 

    \begin{figure}[htp]
\centering
\subfloat[]{\includegraphics[scale=0.2]{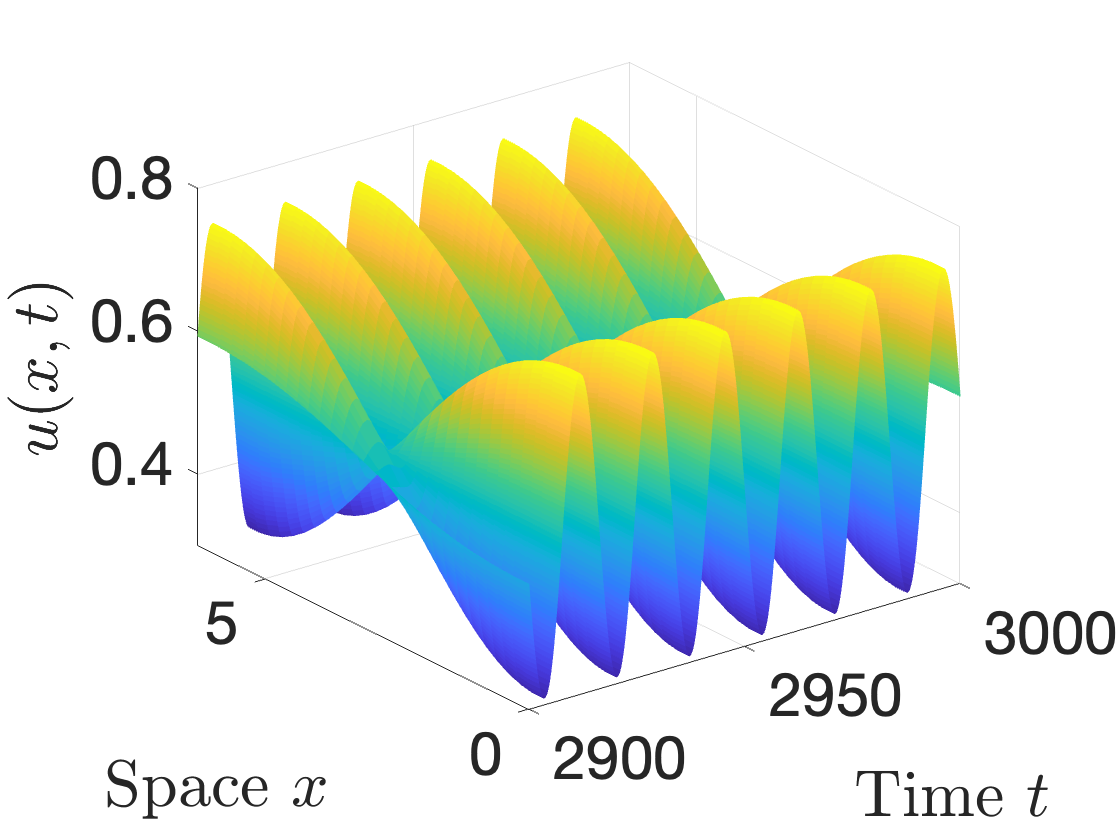}}
\subfloat[]{\includegraphics[scale=0.2]{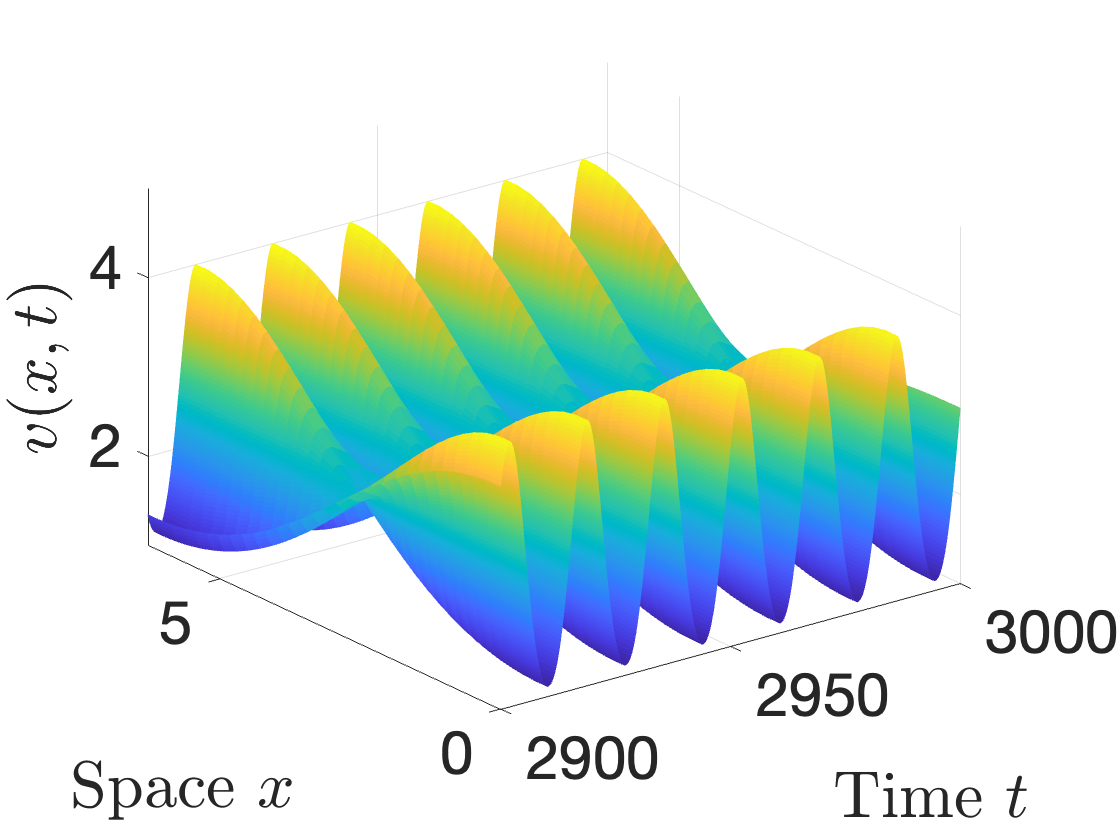}}\quad
\subfloat[]{\includegraphics[scale=0.2]{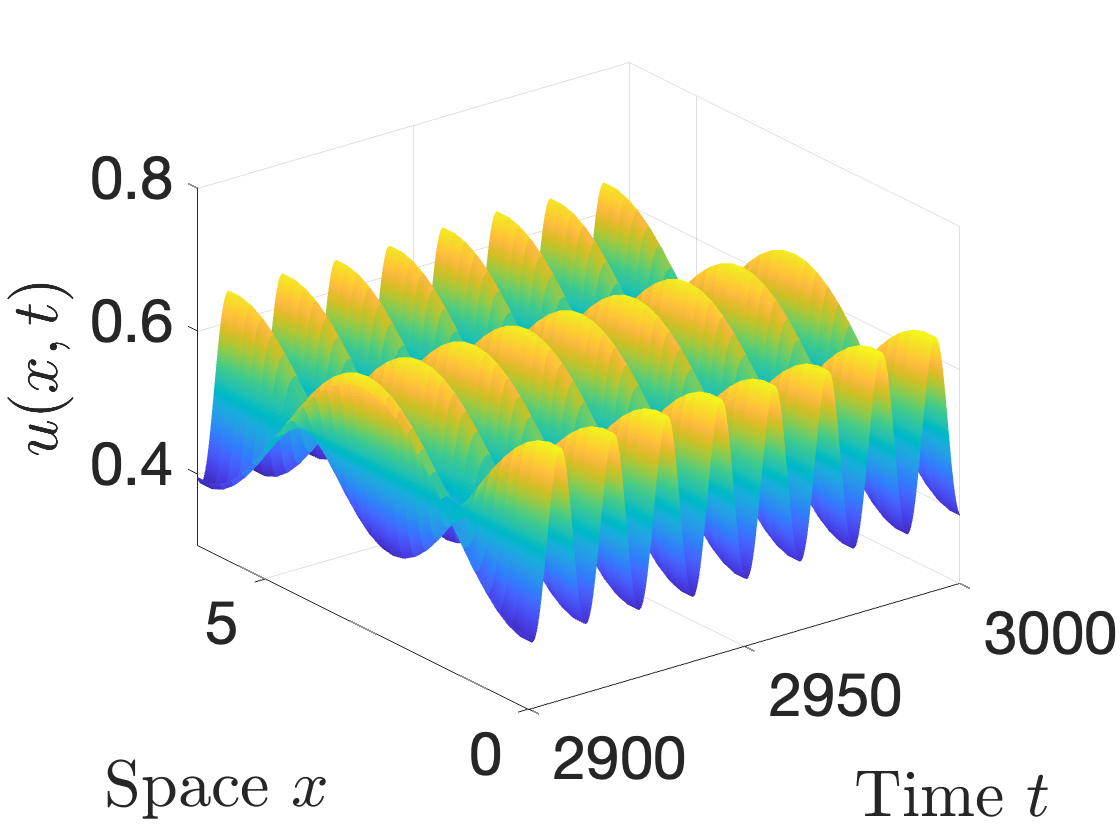}}
\subfloat[]{\includegraphics[scale=0.2]{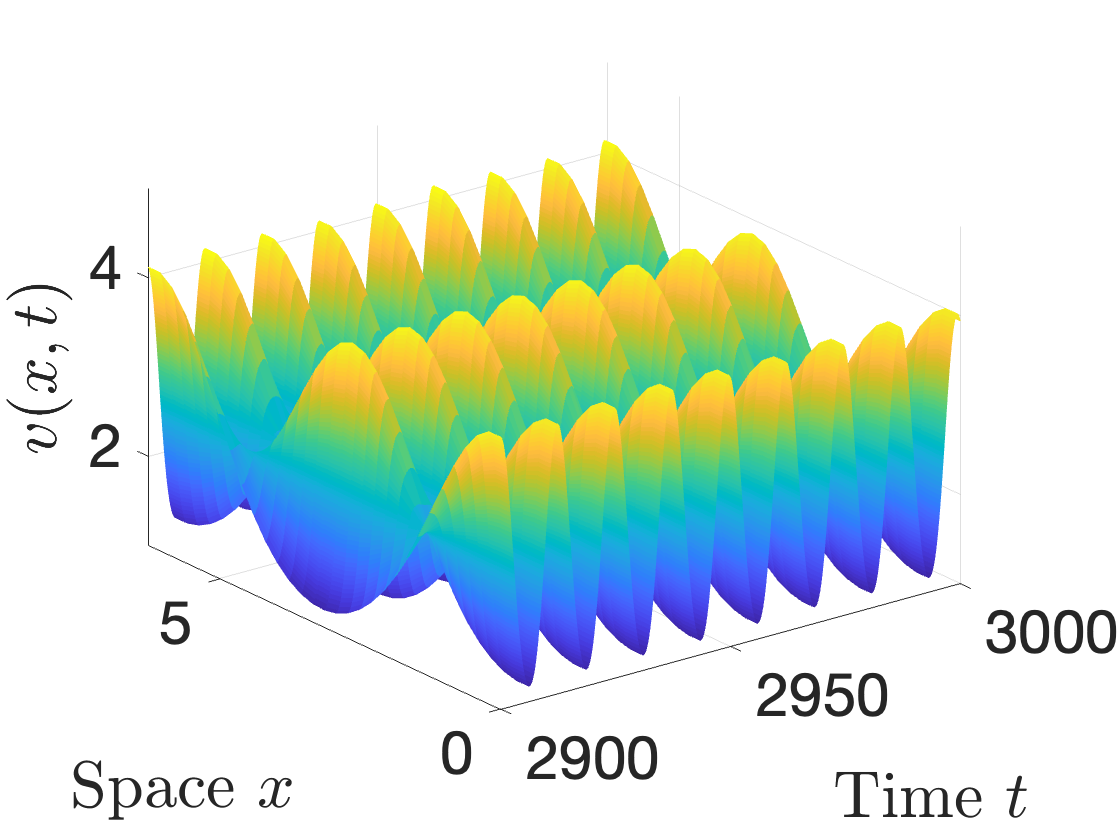}}
  \caption{ For $(d_{21}, \tau)=(4.4, 4.3)$ in Region  $ \textcircled{7}$ of Fig.\ref{Fig2}(b), two types of stable spatially inhomogeneous periodic solutions with different spatial profiles coexist. (a)-(b) The initial value is chosen as $u(x, 0)=u_*+0.1\cos(x/2), v(x,0)=v_*+0.1\cos (x/2)$.; (c)-(d) The initial value is chosen as $u(x, 0)=u_*+0.1\cos(x), v(x,0)=v_*+0.1\cos (x)$.  }
  \label{Fig8}
  \end{figure}

\begin{figure}[htp]
\begin{center}
\includegraphics[scale=0.3]{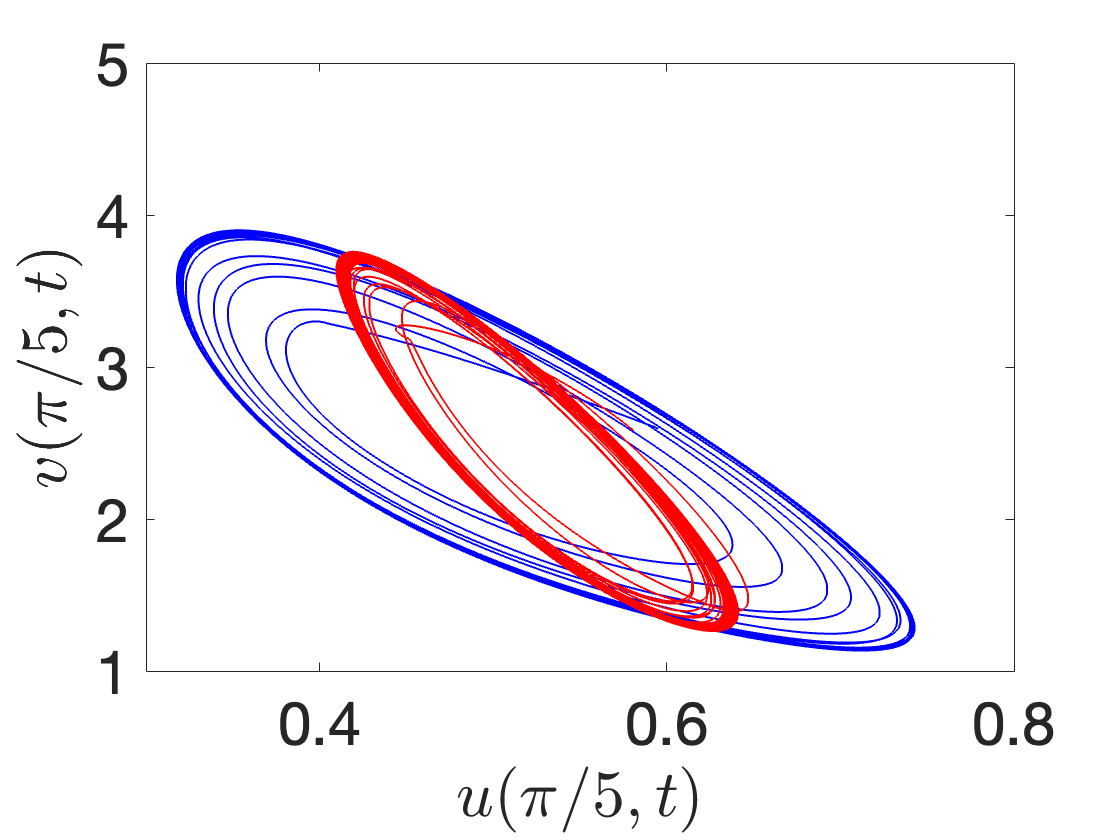}
\end{center}
 \caption{ For $(d_{21}, \tau)=(4.4, 4.3)$ in Region  $ \textcircled{7}$  of Fig.\ref{Fig2}(b) and  fixed space $x=\pi/5$,  two stable periodic orbits in $u$-$v$ plane coexist.}
 \label{Fig9}
\end{figure}

\begin{section}
{Discussion}
\end{section}

In this paper, we have paid our attention on the development of an algorithm for computing the normal form of the double Hopf bifurcation induced by the memory-based diffusion coefficient and memory delay  for the memory-based diffusion system.  The calculating formulae of the second and third terms in the normal form are explicitly derived from those in the original system. This algorithm, which is for the memory-based diffusion system, where the delay appears in the directional diffusion terms and the diffusion terms are nonlinear, is the counterpart of the existing algorithm for the related theory of the classical reaction-diffusion system. By analysing the corresponding normal form, we can determine the dynamical classification near the double Hopf bifurcation point.

Employing the obtained theoretical results to the predator-prey system with Holling-II functional response, we investigate the spatio-temporal dynamics due to  the interaction of  double Hopf bifurcation for two cases: (i) double Hopf bifurcation with the same spatial mode;  (ii) double Hopf bifurcation with the different spatial modes.  For the former, we find two types of stable spatially inhomogeneous periodic solutions with the same spatial mode and similar oscillatory  frequency, and the quasi-periodic solution in some region near the double Hopf bifurcation point. For latter,  we find two types of stable spatially inhomogeneous periodic solutions with different spatial mode and oscillatory  frequency,  and the coexistence of  two stable  periodic solutions with different spatial mode  in some region near the double Hopf bifurcation point.  In both cases,  the pattern  transitions from a unstable periodic solution  to  a stable periodic solution are found. 

We would also like to iterate that the theoretical results in Section 2 are derived for the non-resonant and weakly resonant double Hopf bifurcations. Thus it is not applicable for the strongly resonant double Hopf bifurcation.  The algorithm for computing the normal form for the case of strongly double Hopf bifurcation for the memory-based diffusion  system \eqref{PPMODEL}  still remains open and we leave this for the further investigation.

%

 \begin{appendices}
 \appendixpage
 \addappheadtotoc

\section{Expressions of $A_{i_1i_2i_3i_4}^{(d,j)},  j=1,2,3.$}
$$
 A_{1010}^{(d,1)}=-
2d_{21}^c\tau_c\left(\begin{array}{c}
0\vspace{0.2cm}\\
\phi_{n_1}^{(1)}(-1) \phi_{n_2}^{(2)}(0)+\phi_{n_2}^{(1)}(-1) \phi_{n_1}^{(2)}(0)
\end{array}\right),
$$
$$
 A_{1001}^{(d,1)}=-
2d_{21}^c\tau_c\left(\begin{array}{c}
0\vspace{0.2cm}\\
\phi_{n_1}^{(1)}(-1) \overline{\phi}_{n_2}^{(2)}(0)+\overline{\phi}_{n_2}^{(1)}(-1) \phi_{n_1}^{(2)}(0)
\end{array}\right),
$$

$$
 A_{0110}^{(d,1)}=\overline{A^{(d,1)}_{1001}},\quad\quad
 A_{0101}^{(d,1)}=\overline{A^{(d,1)}_{1010}},
$$

$$
 A_{2000}^{(d,1)}=A_{2000}^{(d,2)}=-
2d_{21}^c\tau_c\left(\begin{array}{c}
0\vspace{0.2cm}\\
\phi_{n_1}^{(1)}(-1) \phi_{n_1}^{(2)}(0)
\end{array}\right),
$$

$$
 A_{0020}^{(d,1)}=A_{0020}^{(d,2)}=-
2d_{21}^c\tau_c\left(\begin{array}{c}
0\vspace{0.2cm}\\
\phi_{n_2}^{(1)}(-1) \phi_{n_2}^{(2)}(0)
\end{array}\right),
$$

$$
 A_{1100}^{(d,1)}=A_{1100}^{(d,2)}=-
2d_{21}^c\tau_c\left(\begin{array}{c}
0\vspace{0.2cm}\\
2\mbox{Re}\left\{\phi_{n_1}^{(1)}(-1) \overline{\phi}_{n_1}^{(2)}(0)\right\}
\end{array}\right),
$$

$$
 A_{0011}^{(d,1)}=A_{0011}^{(d,2)}=-
2d_{21}^c\tau_c\left(\begin{array}{c}
0\vspace{0.2cm}\\
2\mbox{Re}\left\{\phi_{n_2}^{(1)}(-1) \overline{\phi}_{n_2}^{(2)}(0)\right\}
\end{array}\right),
$$

$$
 A_{1010}^{(d,2)}=-
2d_{21}^c\tau_c\left(\begin{array}{c}
0\vspace{0.2cm}\\
\phi_{n_1}^{(1)}(-1) \phi_{n_2}^{(2)}(0)
\end{array}\right), ~~ A_{1010}^{(d,3)}=-
2d_{21}^c\tau_c\left(\begin{array}{c}
0\vspace{0.2cm}\\
\phi_{n_2}^{(1)}(-1) \phi_{n_1}^{(2)}(0)
\end{array}\right),
$$

$$
 A_{1001}^{(d,2)}=-
2d_{21}^c\tau_c\left(\begin{array}{c}
0\vspace{0.2cm}\\
\phi_{n_1}^{(1)}(-1) \overline{\phi}_{n_2}^{(2)}(0)
\end{array}\right), ~~ A_{1001}^{(d,3)}=-
2d_{21}^c\tau_c\left(\begin{array}{c}
0\vspace{0.2cm}\\
\overline{\phi}_{n_2}^{(1)}(-1) \phi_{n_1}^{(2)}(0)
\end{array}\right),
$$

$$
 A_{0200}^{(d,1)}=A_{0200}^{(d,2)} =\overline{A^{(d,1)}_{2000}},\quad\quad
 A_{0002}^{(d,1)}=A_{0002}^{(d,2)}=\overline{A^{(d,1)}_{0020}},
$$

$$
 A_{0110}^{(d,2)}=\overline{A^{(d,2)}_{1001}}, \quad A_{0110}^{(d,3)}=\overline{A^{(d,3)}_{1001}}, \quad
 A_{0101}^{(d,2)}=\overline{A^{(d,2)}_{1010}}, \quad A_{0101}^{(d,3)}=\overline{A^{(d,3)}_{1010}}.
$$

\section{ The calculation of $\bf{ \mbox{Proj}_{S}\left(\left(D_{w,w_x,w_{xx}}f^{(1,2)}_2(z,0,0)\right)U_2^{(2,d)}(z, 0)(\theta)\right)}$}
Denote $\varphi(\theta)= \Phi(\theta) z_x$ and
$$
\begin{array}{lll}
&&F_2^d( \varphi(\theta),  w, w_x, w_{xx})=F_2^d( \Phi(\theta) z_x+w,0)\vspace{0.3cm}\\
&=&
-2d_{21}^c\tau_c\left(\begin{array}{c}
0\vspace{0.2cm}\\
\left(\varphi^{(1)}_{xx}(-1)+w_{xx}^{(1)}(-1)\right)\left(\varphi^{(2)}(0)+w^{(2)}(0)\right)
\end{array}\right)
\vspace{0.3cm}\\
&&
-2d_{21}^c\tau_c\left(\begin{array}{c}
0\vspace{0.2cm}\\
\left(\varphi^{(1)}_x(-1)+w_{x}^{(1)}(-1)\right)\left(\varphi^{(2)}_{x}(0)+w_{x}^{(2)}(0)\right)
\end{array}\right),
\end{array}
$$
$$
\begin{array}{l}
\widetilde{S}_2^{(d,1)}(\varphi (\theta), w)=
-2d_{21}^c\tau_c\left(\begin{array}{c}
0\vspace{0.2cm}\\
\varphi^{(1)}_{xx}(-1)w^{(2)}(0)
\end{array}\right),
\vspace{0.3cm}\\
\widetilde{S}_2^{(d,2)}(\varphi (\theta), w_{x} )
=
-2d_{21}^c\tau_c\left(\begin{array}{c}
0\vspace{0.2cm}\\
\varphi^{(1)}_x(-1)w_{x}^{(2)}(0)
\end{array}\right)-2d_{21}^c\tau_c\left(\begin{array}{c}
0\vspace{0.2cm}\\
\varphi^{(2)}_{x}(0)w_{x}^{(1)}(-1)
\end{array}\right),

\vspace{0.3cm}\\
\widetilde{S}_2^{(d,3)}(\varphi(\theta), w_{xx} )=
-2d_{21}^c\tau_c\left(\begin{array}{c}
0\vspace{0.2cm}\\
\varphi^{(2)}(0)w_{xx}^{(1)}(-1)
\end{array}\right).
\end{array}
$$

Then, we have
 $$
\begin{array}{lll}
&&\left.D_{w, w_x, w_{xx}}F_2^d( \varphi(\theta),  w, w_x, w_{xx})\right|_{w, w_x, w_{xx}=0}U_2^{(2,d)}(z, 0)(\theta)\vspace{0.2cm}\\
&=&\widetilde{S}_2^{(d,1)}\left(\varphi (\theta),  h(\theta,z)\right)
+\widetilde{S}_2^{(d,2)}\left(\varphi (\theta),   h_x(\theta,z)\right)
\vspace{0.2cm}\\
&&
+\widetilde{S}_2^{(d,3)}\left(\varphi(\theta),  h_{xx}(\theta,z) \right)

\end{array}
$$

and
$$
\begin{array}{lll}
&&\left(\begin{array}{c}
                        \left[\widetilde{S}_2^{(d,1)}\left(\varphi (\theta), h(\theta,z) \right), \beta_{\nu}^{(1)}\right] \vspace{0.2cm}\\
  \left[\widetilde{S}_2^{(d,1)}\left(\varphi (\theta),  h(\theta,z) \right), \beta_{\nu}^{(2)}\right]
                         \end{array}
\right)\vspace{0.2cm}\\
&=&-(n_1/\ell)^2\sum\limits_{n\in \mathbb{N}_0} b_{n_1,n,\nu}\left(\mathcal{S}_2^{(d,1)}\left(\phi_{n_1}(\theta)z_1(\theta),h_n(\theta, z)\right)+\mathcal{S}_2^{(d,1)}\left(\overline{\phi}_{n_1}(\theta)z_2,h_n(\theta, z)\right)\right)\vspace{0.2cm}\\
&&
-(n_2/\ell)^2\sum\limits_{n\in \mathbb{N}_0} b_{n_2,n,\nu}\left(\mathcal{S}_2^{(d,1)}\left(\phi_{n_2}(\theta)z_3,h_n(\theta, z)\right)+\mathcal{S}_2^{(d,1)}\left(\overline{\phi}_{n_2}(\theta)z_4,h_n(\theta, z)\right)\right),
\end{array}
$$

$$
\begin{array}{lll}
&&\left(\begin{array}{c}
                        \left[\widetilde{S}_2^{(d,2)}\left(\varphi (\theta), h_x(\theta,z)\right), \beta_{\nu}^{(1)}\right] \vspace{0.2cm}\\
  \left[\widetilde{S}_2^{(d,2)}\left(\varphi (\theta), h_x(\theta,z) \right), \beta_{\nu}^{(2)}\right]
                         \end{array}
\right)\vspace{0.2cm}\\
&=&(n_1/\ell)\sum\limits_{n\in \mathbb{N}_0} (n/\ell)b^s_{n_1,n,\nu}\left(\mathcal{S}_2^{(d,2)}\left(\phi_{n_1}(\theta)z_1,h_n(\theta, z)\right)+\mathcal{S}_2^{(d,2)}\left(\overline{\phi}_{n_1}(\theta)z_2,h_n(\theta, z)\right)\right)\vspace{0.2cm}\\
&&
+(n_2/\ell)\sum\limits_{n\in \mathbb{N}_0} (n/\ell)b^s_{n_2,n,\nu}\left(\mathcal{S}_2^{(d,2)}\left(\phi_{n_2}(\theta)z_3,h_n(\theta, z)\right)+\mathcal{S}_2^{(d,2)}\left(\overline{\phi}_{n_2}(\theta)z_4,h_n(\theta, z)\right)\right),
\end{array}
$$

$$
\begin{array}{lll}
&&\left(\begin{array}{c}
                        \left[\widetilde{S}_2^{(d,3)}\left(\varphi(\theta), h_{xx}(\theta,z) \right), \beta_{\nu}^{(1)}\right] \vspace{0.2cm}\\
  \left[\widetilde{S}_2^{(d,3)}\left(\varphi(\theta), h_{xx}(\theta,z) \right), \beta_{\nu}^{(2)}\right]
                         \end{array}
\right)\vspace{0.2cm}\\
&=&-\sum\limits_{n\in \mathbb{N}_0} (n/\ell)^2b_{n_1,n,\nu}\left(\mathcal{S}_2^{(d,3)}\left(\phi_{n_1}(\theta)z_1,h_n(\theta, z)\right)+\mathcal{S}_2^{(d,3)}\left(\overline{\phi}_{n_1}(\theta)z_2,h_n(\theta, z)\right)\right)\vspace{0.2cm}\\
&&
-\sum\limits_{n\in \mathbb{N}_0} (n/\ell)^2b_{n_2,n,\nu}\left(\mathcal{S}_2^{(d,3)}\left(\phi_{n_2}(\theta)z_3,h_n(\theta, z)\right)+\mathcal{S}_2^{(d,3)}\left(\overline{\phi}_{n_2}(\theta)z_4,h_n(\theta, z)\right)\right),
\end{array}
$$
where, for $\nu=n_1, n_2,$ $b_{n_j,n,\nu}$ is defined as in \eqref{BKJM} and
$$
b^s_{n_j,n,\nu}=\int_0^{\ell\pi}\xi_{n_j}(x)\xi_n(x)\gamma_{\nu}(x)dx=\left\{\begin{array}{ll}
\frac{1}{\sqrt{2\ell\pi}}, & n=2n_j,  \nu=n_j,\vspace{0.2cm}\\
\frac{1}{\sqrt{2\ell\pi}}, & n=n_1+n_2,  \nu=n_{j+(-1)^{j+1}},\vspace{0.2cm}\\
\frac{1}{\sqrt{2\ell\pi}}, & n=n_2-n_1, n_j=n_2, \nu=n_1,  \vspace{0.2cm}\\
-\frac{1}{\sqrt{2\ell\pi}}, & n=n_2-n_1,  n_j=n_1,  \nu=n_2,  \vspace{0.2cm}\\
 0, & \mbox{otherwise},
 \end{array}
\right.
$$
and for $\phi(\theta)=\left(\phi^{(1)}(\theta)\quad \phi^{(2)}(\theta)\right )^T, y(\theta)=\left(y^{(1)}(\theta)\quad y^{(2)}(\theta)\right )^T\in C\left([-1, 0], \mathbb{R}^2\right)$,
\begin{equation}
\label{SDJ}
\left\{
\begin{array}{l}
\mathcal{S}_2^{(d,1)}(\phi (\theta), y(\theta))=
-2d_{21}^c\tau_c\left(\begin{array}{c}
0\vspace{0.2cm}\\
\phi^{(1)}(-1)y^{(2)}(0)
\end{array}\right),
\vspace{0.3cm}\\
\mathcal{S}_2^{(d,2)}(\phi (\theta), y(\theta) )
=
-2d_{21}^c\tau_c\left(\begin{array}{c}
0\vspace{0.2cm}\\
\phi^{(1)}(-1)y^{(2)}(0)
\end{array}\right)-2d_{21}^c\tau_c\left(\begin{array}{c}
0\vspace{0.2cm}\\
\phi^{(2)}(0)y^{(1)}(-1)
\end{array}\right),

\vspace{0.3cm}\\
\mathcal{S}_2^{(d,3)}(\phi(\theta), y(\theta) )=
-2d_{21}^c\tau_c\left(\begin{array}{c}
0\vspace{0.2cm}\\
\phi^{(2)}(0)y^{(1)}(-1)
\end{array}\right).
\end{array}
\right.
\end{equation}

Then, from \eqref{F212W},  \eqref{U2D}, \eqref{U22} and \eqref{U2DX},   we have
       $$
\begin{array}{lll}
&& \left(D_{w,w_x,w_{xx}}f^{(1,2)}_2(z,0,0)\right)U_2^{(2,d)}(z, 0)(\theta)\vspace{0.3cm}\\
 &=&\Psi(0)
   \left(\begin{array}{c}
\left[\left.D_{w, w_x, w_{xx}}F_2^d( \varphi(\theta),  w, w_x, w_{xx})\right|_{w, w_x, w_{xx}=0}U_2^{(2,d)}(z, 0)(\theta), \beta_{\nu}^{(1)}\right]\vspace{0.2cm}\\
\left[\left.D_{w, w_x, w_{xx}}F_2^d( \varphi(\theta),  w, w_x, w_{xx})\right|_{w, w_x, w_{xx}=0}U_2^{(2,d)}(z, 0)(\theta), \beta_{\nu}^{(2)}\right]
\end{array}\right)_{\nu=n_1}^{\nu=n_2}
\end{array}
$$
and then we obtian \eqref{PDWF22}.

\section{Calculation of   $h_{n,q_1q_2q_3q_4}(\theta)$. }

From  \cite{Faria-00TAMS} , we have
$$
\begin{array}{lll}
&&M_2^2\left( h_n(\theta,z) \gamma_n(x)\right)\vspace{0.2cm}\\
&=& D_z\left( h_n(\theta,z) \gamma_n(x)\right)Bz -A_{\mathcal{Q}^1}\left( h_n(\theta,z) \gamma_n(x)\right),
\end{array}
$$
which leads to
 \begin{equation}
 \label{M22HJ}
 \begin{array}{lll}
 &&   \left(\begin{array}{c}
\left[M_2^2\left( h_n(\theta,z) \gamma_n(x)\right), \beta_n^{(1)}\right]\vspace{0.2cm}\\
\left[M_2^2\left( h_n(\theta,z) \gamma_n(x)\right), \beta_n^{(2)}\right]
\end{array}\right)\vspace{0.2cm}\\
&=&2i\omega_{1c}\left(h_{n,2000} (\theta)z_1^2- h_{n,0200}(\theta)z_2^2\right)+ 2i\omega_{2c}\left(h_{n,0020} (\theta)z_3^2-h_{n,0002}(\theta)z_4^2\right)\\
&&+i\left(\omega_{1c}+\omega_{2c}\right)h_{n,1010}(\theta)z_1z_3+i\left(\omega_{1c}-\omega_{2c}\right)h_{n,1001}(\theta)z_1z_4 \\
&&-i\left(\omega_{1c}-\omega_{2c}\right)h_{n,0110}(\theta)z_2z_3-i\left(\omega_{1c}+\omega_{2c}\right)h_{n,0101}(\theta)z_2z_4\vspace{0.2cm}\\
&&-\left(\dot{h}_n(\theta, z)+X_0(\theta)\left(\mathscr{L}_0 \left(h_n(\theta, z)\right)- \dot{h}_n(0, z)\right)\right),
\end{array}
\end{equation}
where
$$
\mathscr{L}_0\left( h_n(\theta, z)\right)= -\tau_c(n/\ell)^2\left( D_1 h_n(0, z)+D_2h_n(-1, z) \right)+\tau_cAh_n(\theta, z).
$$

By \eqref{FJ2}, we get
$$
f_2^2(z,0,0)=X_0(\theta)\widetilde{F}_2\left(\Phi(\theta) z_x,0\right) -\pi\left(X_0(\theta) \widetilde{F}_2\left(\Phi(\theta) z_x,0\right)\right).
$$
\subsection{Case 1: $n_1\not= n_2$. }
By \eqref{PIX0},  \eqref{TLDF2}, \eqref{F2EXP} and \eqref{F2DEXP}, we have , for $n_2\not= n_1$,
\begin{equation}\label{EQF22B}
\begin{array}{lll}
 &&
\left(\begin{array}{c}
\left[f_2^2(z,0,0),\beta_n^{(1)}\right]\vspace{0.1cm}\\
\left[f_2^2(z,0,0),\beta_n^{(2)}\right]
\end{array}\right)\vspace{0.2cm}\\
&=&\begin{cases}
\left\{\begin{array}{l}
\frac{1}{\sqrt{\ell\pi}} X_0(\theta)\left(A_{2000}z_1^2+A_{0200}z_2^2+A_{0020}z_3^2\right.\vspace{0.2cm}\\
\left.+A_{0002}z_4^2+A_{1100}z_1z_2+A_{0011}z_3z_4\right),
\end{array}\right.
& n=0,\vspace{0.3cm}\\
\frac{1} {\sqrt{2\ell\pi}} X_0(\theta)\left(\widetilde{A}_{2000}z_1^2+\widetilde{A}_{0200}z_2^2+\widetilde{A}_{1100}z_1z_2\right),
&n=2n_1, n_2\not=2n_1, n_2\not=3n_1,\vspace{0.3cm}\\
\left\{\begin{array}{l}
\frac{1}{\sqrt{2\ell\pi}}\left(X_0(\theta)I_2-\Phi_{n_2}(\theta)\Psi_{n_2}(0)\right)\left(\widetilde{A}_{2000}z_1^2\right.\vspace{0.2cm}\\
\left.+\widetilde{A}_{0200}z_2^2+\widetilde{A}_{1100}z_1z_2\right),
\end{array}\right.
&n=2n_1, n_2=2n_1, \vspace{0.3cm}\\
\left\{\begin{array}{l}
\frac{1} {\sqrt{2\ell\pi}} X_0(\theta)\left(\widetilde{A}_{2000}z_1^2+\widetilde{A}_{0200}z_2^2+\widetilde{A}_{1100}z_1z_2\right.\vspace{0.2cm}\\
\left.
+\widetilde{A}_{1010}z_1z_3+\widetilde{A}_{1001}z_1z_4+\widetilde{A}_{0110}z_2z_3+\widetilde{A}_{0101}z_2z_4
\right),
\end{array}\right.
&n=2n_1, n_2=3n_1,\vspace{0.3cm}\\
\frac{1}{\sqrt{2\ell\pi}}X_0(\theta)\left(\widetilde{A}_{0020}z_3^2+\widetilde{A}_{0002}z_4^2+\widetilde{A}_{0011}z_3z_4\right),& n=2n_2,\vspace{0.3cm}\\
\left\{\begin{array}{l}
\frac{1}{\sqrt{2\ell\pi}}X_0(\theta)\left(\widehat{A}_{1010}z_1z_3+\widehat{A}_{1001}z_1z_4+\widehat{A}_{0110}z_2z_3\right.\vspace{0.2cm}\\
\left.+\widehat{A}_{0101}z_2z_4\right),
\end{array}\right.
& n=n_1+n_2,\vspace{0.3cm}\\
\left\{\begin{array}{l}
\frac{1}{\sqrt{2\ell\pi}}X_0(\theta)\left(\widetilde{A}_{1010}z_1z_3+\widetilde{A}_{1001}z_1z_4\right.\vspace{0.2cm}\\
\left.+\widetilde{A}_{0110}z_2z_3+\widetilde{A}_{0101}z_2z_4\right),
\end{array}\right.
& n=n_2-n_1,    n_2\not=2n_1,   n_2\not=3n_1,\vspace{0.2cm}\\
\left\{\begin{array}{l}
\frac{1}{\sqrt{2\ell\pi}}\left(X_0(\theta)I_2- \Phi_{n_1}(\theta)\Psi_{n_1}(0)\right)  \left(\widetilde{A}_{1010}z_1z_3\right.\\
\left.+\widetilde{A}_{1001}z_1z_4+\widetilde{A}_{0110}z_2z_3+\widetilde{A}_{0101}z_2z_4\right),
\end{array}\right.
& n=n_2-n_1,   n_2=2n_1,\vspace{0.2cm}\\
\left\{\begin{array}{l}
\frac{1}{\sqrt{2\ell\pi}}X_0(\theta)\left(\widetilde{A}_{1010}z_1z_3+\widetilde{A}_{1001}z_1z_4+\widetilde{A}_{0110}z_2z_3\right.\vspace{0.2cm}\\
\left.+\widetilde{A}_{0101}z_2z_4 +\widetilde{A}_{2000}z_1^2+\widetilde{A}_{0200}z_2^2+\widetilde{A}_{1100}z_1z_2\right),
\end{array}\right.
& n=n_2-n_1,    n_2=3n_1,
\end{cases}
\end{array}
\end{equation}
where $\widetilde{A}_{j_1j_2j_3j_4} $ is defined by  \eqref{WLDAJ1}, \eqref{WLDAJ2} and  the following \eqref{WLDAJ3}
\begin{equation}
\label{WLDAJ3}
\left\{\begin{array}{l}
\widetilde{A}_{j_1j_2j_3j_4}=A_{j_1j_2j_3j_4}-\frac{n_2^2}{\ell^2}\left(A_{j_1j_2j_3j_4}^{(d,1)}+ A_{j_1j_2j_3j_4}^{(d,2)}\right),\vspace{0.2cm}\\
j_3, j_4=0, 1, 2, \quad j_3+j_4=2, \quad   j_1=j_2=0,
\end{array}\right.
\end{equation}
and
\begin{equation}
\label{HATAJ}
\left\{\begin{array}{l}
\widehat{A}_{j_1j_2j_3j_4}=A_{j_1j_2j_3j_4}-\frac{n_1n_2}{\ell^2}A_{j_1j_2j_3j_4}^{(d,1)}-\frac{n_1^2}{\ell^2}A_{j_1j_2j_3j_4}^{(d,2)}-\frac{n_2^2}{\ell^2}A_{j_1j_2j_3j_4}^{(d,3)},\vspace{0.2cm}\\
j_1, j_2, j_3, j_4=0, 1, \quad j_1+j_2=1, \quad  j_3+j_4=1.
\end{array}\right.
\end{equation}

Hence, from \eqref{M22HJ}, \eqref{EQF22B} and matching the
coefficients of $z_1^2,z_1z_2,z_1z_3,z_2z_3,z_3^2$, we have
\begin{equation}
\label{H0}
n=0,~\left\{\begin{array}{ll}
z_1^2: &\begin{cases}
\dot{h}_{0,2000}(\theta)-2i\omega_{1c} h_{0,2000}(\theta)=(0\quad 0)^T,\vspace{0.1cm}\\
\dot{h}_{0,2000}(0)-L_0(h_{0,2000}(\theta))=\frac{1}{\sqrt{\ell\pi}}A_{2000},
\end{cases}\vspace{0.2cm}\\
z_3^2:&
\begin{cases}
\dot{h}_{0,0020}(\theta)-2i\omega_{2c} h_{0,0020}(\theta)=(0\quad 0)^T,\vspace{0.1cm}\\
\dot{h}_{0,0020}(0)-L_0\left(h_{0,0020}(\theta)\right)=\frac{1}{\sqrt{\ell\pi}}A_{0020},
\end{cases}
\vspace{0.2cm}\\
z_1z_2:&\begin{cases}
\dot{h}_{0,1100}(\theta)=(0\quad 0)^T,\vspace{0.1cm}\\
\dot{h}_{0,1100}(0)-L_0(h_{0,1100}(\theta))=\frac{1}{\sqrt{\ell\pi}}A_{1100},
\end{cases}\vspace{0.2cm}\\
z_3z_4:&\begin{cases}
\dot{h}_{0,0011}(\theta)=(0\quad 0)^T,\vspace{0.1cm}\\
\dot{h}_{0,0011}(0)-L_0(h_{0,0011}(\theta))=\frac{1}{\sqrt{\ell\pi}}A_{0011},
\end{cases}
\end{array}
\right.
\end{equation}
\begin{equation}
\label{H2N11}
\begin{array}{l}
n=2n_1,\vspace{0.2cm}\\
n_2\not=2n_1,
\end{array}
~\left\{\begin{array}{ll}
z_1^2: &\begin{cases}
\dot{h}_{2n_1,2000}(\theta)-2i\omega_{1c} h_{2n_1,2000}(\theta)=(0\quad 0)^T,\vspace{0.1cm}\\
\dot{h}_{2n_1,2000}(0)-\mathscr{L}_0(h_{2n_1,2000}(\theta))=\frac{1}{\sqrt{2\ell\pi}}\widetilde{A}_{2000},
\end{cases}\vspace{0.2cm}\\
z_1z_2:&\begin{cases}
\dot{h}_{2n_1,1100}(\theta)=(0\quad 0)^T,\vspace{0.1cm}\\
\dot{h}_{2n_1,1100}(0)-\mathscr{L}_0(h_{2n_1,1100}(\theta))=\frac{1}{\sqrt{2\ell\pi}}\widetilde{A}_{1100},
\end{cases}\vspace{0.2cm}\\
z_3z_4:&\begin{cases}
\dot{h}_{2n_1,0011}(\theta)=(0\quad 0)^T,\vspace{0.1cm}\\
\dot{h}_{2n_1,0011}(0)-\mathscr{L}_0(h_{2n_1,0011}(\theta))=(0\quad 0)^T,
\end{cases}
\end{array}
\right.
\end{equation}
\begin{equation}
\label{H2N12}
\begin{array}{l}
n=2n_1,\vspace{0.2cm}\\
n_2=2n_1,
\end{array}
~\left\{\begin{array}{ll}
z_1^2: &\begin{cases}
\dot{h}_{2n_1,2000}(\theta)-2i\omega_{1c} h_{2n_1,2000}(\theta)=\frac{1}{\sqrt{2\ell\pi}}\Phi_{n_2}(\theta)\Psi_{n_2}(0)\widetilde{A}_{2000},\vspace{0.1cm}\\
\dot{h}_{2n_1,2000}(0)-\mathscr{L}_0(h_{2n_1,2000}(\theta))=\frac{1}{\sqrt{2\ell\pi}}\widetilde{A}_{2000},
\end{cases}\vspace{0.2cm}\\
z_1z_2:&\begin{cases}
\dot{h}_{2n_1,1100}(\theta)=\frac{1}{\sqrt{2\ell\pi}}\Phi_{n_2}(\theta)\Psi_{n_2}(0)\widetilde{A}_{1100},\vspace{0.1cm}\\
\dot{h}_{2n_1,1100}(0)-\mathscr{L}_0(h_{2n_1,1100}(\theta))=\frac{1}{\sqrt{2\ell\pi}}\widetilde{A}_{1100},
\end{cases}\vspace{0.2cm}\\
z_3z_4:&\begin{cases}
\dot{h}_{2n_1,0011}(\theta)=(0, 0)^T,\vspace{0.1cm}\\
\dot{h}_{2n_1,0011}(0)-\mathscr{L}_0(h_{2n_1,0011}(\theta))=(0\quad 0)^T,
\end{cases}
\end{array}
\right.
\end{equation}

\begin{equation}
\label{H2N2}
n=2n_2,~\left\{\begin{array}{ll}
z_3^2: &\begin{cases}
\dot{h}_{2n_2,0020}(\theta)-2i\omega_{2c} h_{2n_2,0020}(\theta)=(0\quad 0)^T,\vspace{0.1cm}\\
\dot{h}_{2n_2,0020}(0)-\mathscr{L}_0(h_{2n_2,0020}(\theta))=\frac{1}{\sqrt{2\ell\pi}}\widetilde{A}_{0020},
\end{cases}\vspace{0.2cm}\\
z_1z_2:&\begin{cases}
\dot{h}_{2n_2,1100}(\theta)=(0, 0)^T,\vspace{0.1cm}\\
\dot{h}_{2n_2,1100}(0)-\mathscr{L}_0(h_{2n_2,1100}(\theta))=(0\quad 0)^T,
\end{cases}\vspace{0.2cm}\\
z_3z_4:&\begin{cases}
\dot{h}_{2n_2,0011}(\theta)=(0\quad 0)^T,\vspace{0.1cm}\\
\dot{h}_{2n_2,0011}(0)-\mathscr{L}_0(h_{2n_2,0011}(\theta))=\frac{1}{\sqrt{2\ell\pi}}\widetilde{A}_{0011},
\end{cases}
\end{array}
\right.
\end{equation}
\begin{equation}
\label{HN1PN2}
{\small
n=n_1+n_2,\left\{\begin{array}{ll}
z_1z_3: &
\begin{cases}
\dot{h}_{n_1+n_2,1010}(\theta)-i\left(\omega_{1c}+\omega_{2c}\right)h_{n_1+n_2,1010}(\theta)=(0\quad 0)^T,\vspace{0.1cm}\\
\dot{h}_{n_1+n_2,1010}(0)-\mathscr{L}_0\left(h_{n_1+n_2,1010}(\theta)\right)
=\frac{1}{\sqrt{2\ell\pi}}\widehat{A}_{1010},
\end{cases} \vspace{0.2cm}\\
z_1z_4:&
\begin{cases}
\dot{h}_{n_1+n_2,1001}(\theta)-i\left(\omega_{1c}-\omega_{2c}\right)h_{n_1+n_2,1001}(\theta)=(0\quad 0)^T,\vspace{0.1cm}\\
\dot{h}_{n_1+n_2,1001}(0)-\mathscr{L}_0\left(h_{n_1+n_2,1001}(\theta)\right)
=\frac{1}{\sqrt{2\ell\pi}}\widehat{A}_{1001},
\end{cases} \vspace{0.2cm}\\
z_2z_3:&
\begin{cases}
\dot{h}_{n_1+n_2,0110}(\theta)-i\left(\omega_{2c}-\omega_{1c}\right)h_{n_1+n_2,0110}(\theta)=(0\quad 0)^T,\vspace{0.1cm}\\
\dot{h}_{n_1+n_2,0110}(0)-\mathscr{L}_0\left(h_{n_1+n_2,0110}(\theta)\right)
=\frac{1}{\sqrt{2\ell\pi}}\widehat{A}_{0110},
\end{cases}
\end{array}
\right.}
\end{equation}
\begin{equation}
\label{HN2MN11}
{\small
\begin{array}{l}
n=n_2-n_1,\vspace{0.2cm}\\
n_2\not=2n_1,
\end{array}
\left\{\begin{array}{ll}
z_1z_3: &
\begin{cases}
\dot{h}_{n_2-n_1,1010}(\theta)-i\left(\omega_{1c}+\omega_{2c}\right)h_{n_2-n_1,1010}(\theta)=(0\quad 0)^T,\vspace{0.1cm}\\
\dot{h}_{n_2-n_1,1010}(0)-\mathscr{L}_0\left(h_{n_2-n_1,1010}(\theta)\right)
=\frac{1}{\sqrt{2\ell\pi}}\widetilde{A}_{1010},
\end{cases} \vspace{0.2cm}\\
z_1z_4:&
\begin{cases}
\dot{h}_{n_2-n_1,1001}(\theta)-i\left(\omega_{1c}-\omega_{2c}\right)h_{n_2-n_1,1001}(\theta)=(0\quad 0)^T,\vspace{0.1cm}\\
\dot{h}_{n_2-n_1,1001}(0)-\mathscr{L}_0\left(h_{n_2-n_1,1001}(\theta)\right)
=\frac{1}{\sqrt{2\ell\pi}}\widetilde{A}_{1001},
\end{cases} \vspace{0.2cm}\\
z_2z_3:&
\begin{cases}
\dot{h}_{n_2-n_1,0110}(\theta)-i\left(\omega_{2c}-\omega_{1c}\right)h_{n_2-n_1,0110}(\theta)=(0\quad 0)^T,\vspace{0.1cm}\\
\dot{h}_{n_2-n_1,0110}(0)-\mathscr{L}_0\left(h_{n_2-n_1,0110}(\theta)\right)
=\frac{1}{\sqrt{2\ell\pi}}\widetilde{A}_{0110},
\end{cases}
\end{array}
\right.}
\end{equation}

\begin{equation}
\label{HN2MN12}
{\small
\begin{array}{l}
n=n_2-n_1,\vspace{0.2cm}\\
n_2=2n_1
\end{array}
\left\{\begin{array}{ll}
z_1z_3: &
\begin{cases}
\dot{h}_{n_2-n_1,1010}(\theta)-i\left(\omega_{1c}+\omega_{2c}\right)h_{n_2-n_1,1010}(\theta)=\frac{1}{\sqrt{2\ell\pi}} \Phi_{n_1}(\theta)\Psi_{n_1}(0)\widetilde{A}_{1010} ,\vspace{0.1cm}\\
\dot{h}_{n_2-n_1,1010}(0)-\mathscr{L}_0\left(h_{n_2-n_1,1010}(\theta)\right)
=\frac{1}{\sqrt{2\ell\pi}}\widetilde{A}_{1010},
\end{cases} \vspace{0.2cm}\\
z_1z_4:&
\begin{cases}
\dot{h}_{n_2-n_1,1001}(\theta)-i\left(\omega_{1c}-\omega_{2c}\right)h_{n_2-n_1,1001}(\theta)=\frac{1}{\sqrt{2\ell\pi}} \Phi_{n_1}(\theta)\Psi_{n_1}(0)\widetilde{A}_{1001} ,\vspace{0.1cm}\\
\dot{h}_{n_2-n_1,1001}(0)-\mathscr{L}_0\left(h_{n_2-n_1,1001}(\theta)\right)
=\frac{1}{\sqrt{2\ell\pi}}\widetilde{A}_{1001},
\end{cases} \vspace{0.2cm}\\
z_2z_3:&
\begin{cases}
\dot{h}_{n_2-n_1,0110}(\theta)-i\left(\omega_{2c}-\omega_{1c}\right)h_{n_2-n_1,0110}(\theta)=\frac{1}{\sqrt{2\ell\pi}} \Phi_{n_1}(\theta)\Psi_{n_1}(0)\widetilde{A}_{0110} ,\vspace{0.1cm}\\
\dot{h}_{n_2-n_1,0110}(0)-\mathscr{L}_0\left(h_{n_2-n_1,0110}(\theta)\right)
=\frac{1}{\sqrt{2\ell\pi}}\widetilde{A}_{0110}.
\end{cases}
\end{array}
\right.}
\end{equation}
Solving \eqref{H0}, \eqref{H2N11}, \eqref{H2N12},\eqref{H2N2},\eqref{HN1PN2},\eqref{HN2MN11} and \eqref{HN2MN12}, we obtain
\begin{equation}
\label{H0EXPR}
\left\{
\begin{array}{l}
h_{0,2000}(\theta) =\frac{1}{\sqrt{\ell\pi}}\left(\widetilde{\mathcal{M}}_0(2i\omega_{1c})\right)^{-1}A_{2000}e^{2i\omega_{1c}\theta},\vspace{0.2cm}\\
h_{0,0020}(\theta) =\frac{1}{\sqrt{\ell\pi}}\left(\widetilde{\mathcal{M}}_0(2i\omega_{2c})\right)^{-1}A_{0020}e^{2i\omega_{2c}\theta},  \vspace{0.2cm}\\
h_{0,1100}(\theta) =\frac{1}{\sqrt{\ell\pi}}\left(\widetilde{\mathcal{M}}_0(0)\right)^{-1}A_{1100}, \vspace{0.2cm}\\
h_{0,0011}(\theta) =\frac{1}{\sqrt{\ell\pi}}\left(\widetilde{\mathcal{M}}_0(0)\right)^{-1}A_{0011},
\end{array}
\right.
\end{equation}
\begin{equation}
\label{H2N12EXPR1}
\left\{
\begin{array}{lll}
h_{2n_1,2000}(\theta) &=&\frac{1}{\sqrt{2\ell\pi}}\left(\widetilde{\mathcal{M}}_{2n_1}(2i\omega_{1c})\right)^{-1}\widetilde{A}_{2000}e^{2i\omega_{1c}\theta}\vspace{0.2cm}\\
h_{2n_1,1100}(\theta) &=&\frac{1}{\sqrt{2\ell\pi}}\left(\widetilde{\mathcal{M}}_{2n_1}(0)\right)^{-1}\widetilde{A}_{1100}, \vspace{0.2cm}\\
h_{2n_1,0011}(\theta) &=& (0\quad 0)^T,
\end{array} ~~\mbox{for}~~n_2\not=2n_1,
\right.
\end{equation}
\begin{equation}
\label{H2N2EXPR}
\left\{
\begin{array}{l}
h_{2n_2,0020}(\theta) =\frac{1}{\sqrt{2\ell\pi}}\left(\widetilde{\mathcal{M}}_{2n_2}(2i\omega_{2c})\right)^{-1}\widetilde{A}_{0020}e^{2i\omega_{2c}\theta}\vspace{0.2cm}\\

h_{2n_2,1100}(\theta) =(0, 0)^T, \vspace{0.2cm}\\

h_{2n_2,0011}(\theta) =\frac{1}{\sqrt{2\ell\pi}}
\left(\widetilde{\mathcal{M}}_{2n_2}(0)\right)^{-1}\widetilde{A}_{0011} ,
\end{array}
\right.
\end{equation}
\begin{equation}
\label{HN1N2EXPR}
\left\{
\begin{array}{l}
h_{n_1+n_2,1010}(\theta) =\frac{1}{\sqrt{2\ell\pi}}\left(\widetilde{\mathcal{M}}_{n_1+n_2}\left(i\left(\omega_{1c}+\omega_{2c}\right)\right)\right)^{-1}\widehat{A}_{1010}e^{i\left(\omega_{1c}+\omega_{2c}\right)\theta}\vspace{0.2cm}\\

h_{n_1+n_2,1001}(\theta) =\frac{1}{\sqrt{2\ell\pi}}\left(\widetilde{\mathcal{M}}_{n_1+n_2}\left(i\left(\omega_{1c}-\omega_{2c}\right)\right)\right)^{-1}\widehat{A}_{1001}e^{i\left(\omega_{1c}-\omega_{2c}\right)\theta}\vspace{0.2cm}\\

h_{n_1+n_2,0110}(\theta) =\frac{1}{\sqrt{2\ell\pi}}\left(\widetilde{\mathcal{M}}_{n_1+n_2}\left(i\left(\omega_{2c}-\omega_{1c}\right)\right)\right)^{-1}\widehat{A}_{0110}e^{i\left(\omega_{2c}-\omega_{1c}\right)\theta},
\end{array}
\right.
\end{equation}
$$
\left\{
\begin{array}{lll}
&&h_{2n_1,2000}(\theta)\vspace{0.2cm}\\
 &=&\frac{1}{\sqrt{2\ell\pi}}\left(\widetilde{\mathcal{M}}_{2n_1}\left(2i\omega_{1c}\right)\right)^{-1}\left( \widetilde{A}_{2000}
-C_1 \widetilde{\mathcal{M}}_{2n_1}\left(i\omega_{2c}\right)\phi_{n_2}(0)\right.\vspace{0.2cm}\\
&&
  \left.-C_2 \widetilde{\mathcal{M}}_{2n_1}\left(-i\omega_{2c}\right)\overline{\phi}_{n_2}(0) \right)e^{2i \omega_{1c}\theta}\vspace{0.2cm}\\
 &&
 +\frac{1}{\sqrt{2\ell\pi}}C_1\phi _{n_2}(\theta)+\frac{1}{\sqrt{2\ell\pi}}C_2\overline{\phi}_{n_2}(\theta),\vspace{0.2cm}\\
&&h_{2n_1,1100}(\theta)\vspace{0.2cm}\\
 &=&\frac{1}{\sqrt{2\ell\pi}}\left(\widetilde{\mathcal{M}}_{2n_1}\left(0\right)\right)^{-1}\left( \widetilde{A}_{1100}
-C_3 \widetilde{\mathcal{M}}_{2n_1}\left(i\omega_{2c}\right)\phi_{n_2}(0)\right.\vspace{0.2cm}\\
&&
  \left.-C_4 \widetilde{\mathcal{M}}_{2n_1}\left(-i\omega_{2c}\right)\overline{\phi}_{n_2}(0) \right)
 +\frac{1}{\sqrt{2\ell\pi}}C_3\phi _{n_2}(\theta)+\frac{1}{\sqrt{2\ell\pi}}C_4\overline{\phi}_{n_2}(\theta),\vspace{0.2cm}\\
&&h_{2n_1,0011}(\theta) = (0\quad 0)^T,
\end{array} ~~\mbox{for}~~n_2=2n_1,
\right.
$$
where
$$C_1=\frac{1}{i(\omega_{2c}-2\omega_{1c})} \psi_{n_2}^T(0)\widetilde{A}_{2000} , \quad C_2=-\frac{1}{i(\omega_{2c}+2\omega_{1c})} \overline{\psi}_{n_2}^T(0)\widetilde{A}_{2000}, $$
$$C_3=\frac{1}{i\omega_{2c}} \psi_{n_2}^T(0)\widetilde{A}_{1100} , \quad C_4=-\frac{1}{i\omega_{2c}} \overline{\psi}_{n_2}^T(0)\widetilde{A}_{1100}, $$
$$
\left\{
\begin{array}{lll}
h_{n_2-n_1,1010}(\theta) &=&\frac{1}{\sqrt{2\ell\pi}}\left(\widetilde{\mathcal{M}}_{n_2-n_1}\left(i(\omega_{1c}+\omega_{2c})\right)\right)^{-1}\widetilde{A}_{1010}e^{i\left(\omega_{1c}+\omega_{2c}\right)\theta},\vspace{0.2cm}\\
h_{n_2-n_1,1001}(\theta) &=&\frac{1}{\sqrt{2\ell\pi}}\left(\widetilde{\mathcal{M}}_{n_2-n_1}\left(i(\omega_{1c}-\omega_{2c})\right)\right)^{-1}\widetilde{A}_{1001}e^{i\left(\omega_{1c}-\omega_{2c}\right)\theta}, \vspace{0.2cm}\\
h_{n_2-n_1,0110}(\theta) &=&\frac{1}{\sqrt{2\ell\pi}}
\left(\widetilde{\mathcal{M}}_{n_2-n_1}\left(i(\omega_{2c}-\omega_{1c})\right)\right)^{-1}\widetilde{A}_{0110}e^{i\left(\omega_{2c}-\omega_{1c}\right)\theta},
\end{array} ~~\mbox{for}~~n_2\not=2n_1,
\right.
$$
and for $n_2 =2n_1,$ we have
$$
\left\{
\begin{array}{lll}
h_{n_2-n_1,1010}(\theta)
 &=&\frac{1}{\sqrt{2\ell\pi}}\left(\widetilde{\mathcal{M}}_{n_2-n_1}\left(i(\omega_{1c}+\omega_{2c})\right)\right)^{-1} \left(\widetilde{A}_{1010}-C_5 \widetilde{\mathcal{M}}_{n_2-n_1}\left(i\omega_{1c}\right)\phi_{n_1}(0)\right.\vspace{0.2cm}\\
   &&\left.
   -C_6 \widetilde{\mathcal{M}}_{n_2-n_1}\left(-i\omega_{1c}\right)\overline{\phi}_{n_1}(0) \right)e^{i\left(\omega_{1c}+\omega_{2c}\right)\theta}\vspace{0.2cm}\\
 &&
 +\frac{1}{\sqrt{2\ell\pi}}C_5\phi _{n_1}(\theta)+\frac{1}{\sqrt{2\ell\pi}}C_6\overline{\phi}_{n_1}(\theta),\vspace{0.2cm}\\
h_{n_2-n_1,1001}(\theta)
 &=&\frac{1}{\sqrt{2\ell\pi}}\left(\widetilde{\mathcal{M}}_{n_2-n_1}\left(i(\omega_{1c}-\omega_{2c})\right)\right)^{-1} \left(\widetilde{A}_{1001}-C_7 \widetilde{\mathcal{M}}_{n_2-n_1}\left(i\omega_{1c}\right)\phi_{n_1}(0)\right.\vspace{0.2cm}\\
   &&\left.
   -C_8 \widetilde{\mathcal{M}}_{n_2-n_1}\left(-i\omega_{1c}\right)\overline{\phi}_{n_1}(0) \right)e^{i\left(\omega_{1c}-\omega_{2c}\right)\theta}\vspace{0.2cm}\\
 &&
 +\frac{1}{\sqrt{2\ell\pi}}C_7\phi _{n_1}(\theta)+\frac{1}{\sqrt{2\ell\pi}}C_8\overline{\phi}_{n_1}(\theta),\vspace{0.2cm}\\
h_{n_2-n_1,0110}(\theta)
 &=&\frac{1}{\sqrt{2\ell\pi}}\left(\widetilde{\mathcal{M}}_{n_2-n_1}\left(i(\omega_{2c}-\omega_{1c})\right)\right)^{-1} \left(\widetilde{A}_{0110}-C_9 \widetilde{\mathcal{M}}_{n_2-n_1}\left(i\omega_{1c}\right)\phi_{n_1}(0)\right.\vspace{0.2cm}\\
   &&\left.
   -C_{10} \widetilde{\mathcal{M}}_{n_2-n_1}\left(-i\omega_{1c}\right)\overline{\phi}_{n_1}(0) \right)e^{i\left(\omega_{2c}-\omega_{1c}\right)\theta}\vspace{0.2cm}\\
 &&
 +\frac{1}{\sqrt{2\ell\pi}}C_9\phi _{n_1}(\theta)+\frac{1}{\sqrt{2\ell\pi}}C_{10}\overline{\phi}_{n_1}(\theta),\vspace{0.2cm}
\end{array}
\right.
$$
where
$$
\begin{array}{ll}
C_5=-\frac{1}{i\omega_{2c}} \psi_{n_1}^T(0)\widetilde{A}_{1010} , &
  C_6=-\frac{1}{i(2\omega_{1c}+\omega_{2c})} \overline{\psi}_{n_1}^T(0)\widetilde{A}_{1010}, \vspace{0.2cm}\\
C_7=\frac{1}{i\omega_{2c}} \psi_{n_1}^T(0)\widetilde{A}_{1001}, & C_8=-\frac{1}{i(2\omega_{1c}-\omega_{2c})} \overline{\psi}_{n_1}^T(0)\widetilde{A}_{1001},\vspace{0.2cm}\\
C_9=\frac{1}{i(2\omega_{1c}-\omega_{2c})} \psi_{n_1}^T(0)\widetilde{A}_{0110} , & C_{10}=-\frac{1}{i\omega_{2c}} \overline{\psi}_{n_1}^T(0)\widetilde{A}_{0110} .
\end{array}
 $$
\subsection{Case 2: $n_1= n_2$. }
 By \eqref{PIX0},  \eqref{TLDF2}, \eqref{F2EXP} and \eqref{F2DEXP},
for $n_2= n_1$,  we have
\begin{equation}
\label{EQF22Bn1En2}
\begin{array}{lll}
 &&
\left(\begin{array}{c}
\left[f_2^2(z,0,0),\beta_n^{(1)}\right]\vspace{0.1cm}\\
\left[f_2^2(z,0,0),\beta_n^{(2)}\right]
\end{array}\right)\vspace{0.2cm}\\
&=&\begin{cases}
\left\{\begin{array}{l}
\frac{1}{\sqrt{\ell\pi}} X_0(\theta)\left(A_{2000}z_1^2+A_{0200}z_2^2+A_{0020}z_3^2+A_{0002}z_4^2\right.\vspace{0.2cm}\\
+A_{1100}z_1z_2+A_{0011}z_3z_4
+\breve{A}_{1010}z_1z_3+\breve{A}_{1001}z_1z_4
\vspace{0.2cm}\\
\left.
+\breve{A}_{0110}z_2z_3+\breve{A}_{0101}z_2z_4
\right),
\end{array}\right.
& n=0,\vspace{0.3cm}\\
\left\{\begin{array}{l}
\frac{1}{\sqrt{2\ell\pi}}X_0(\theta)\left(\widetilde{A}_{2000}z_1^2+\widetilde{A}_{0200}z_2^2+\widetilde{A}_{1100}z_1z_2\right.\vspace{0.2cm}\\
+\widetilde{A}_{0020}z_3^2+\widetilde{A}_{0002}z_4^2+\widetilde{A}_{0011}z_3z_4+\widehat{A}_{1010}z_1z_3\vspace{0.2cm}\\
\left.+\widehat{A}_{1001}z_1z_4+\widehat{A}_{0110}z_2z_3+\widehat{A}_{0101}z_2z_4
\right),
\end{array}\right.
&n=2n_1,\vspace{0.3cm}\\
\end{cases}
\end{array}
\end{equation}
where $\widetilde{A}_{j_1j_2j_3j_4}$  and $\widehat{A}_{j_1j_2j_3j_4}$ are defined   by  \eqref{WLDAJ1}, \eqref{WLDAJ2}, 
\eqref{WLDAJ3} and \eqref{HATAJ}, and $\breve{A}_{j_1j_2j_3j_4}$ is defined by the following
\begin{equation*}
\left\{\begin{array}{l}
\breve{A}_{j_1j_2j_3j_4}=A_{j_1j_2j_3j_4}+\frac{n_1^2}{\ell^2} \left(A_{j_1j_2j_3j_4}^{(d,1)} -  A_{j_1j_2j_3j_4}^{(d,2)}- A_{j_1j_2j_3j_4}^{(d,3)}\right) ,\vspace{0.2cm}\\
j_1, j_2, j_3, j_4=0, 1, \quad j_1+j_2=1, \quad   j_3+j_4=1.
\end{array}\right.
\end{equation*}
From  \eqref{M22HJ} and \eqref{EQF22Bn1En2}, we have
\begin{equation}
\label{H0N1EN2}
\left\{
\begin{array}{l}
h_{0,2000}(\theta) =\frac{1}{\sqrt{\ell\pi}}\left(\widetilde{\mathcal{M}}_0(2i\omega_{1c})\right)^{-1}A_{2000}e^{2i\omega_{1c}\theta},\vspace{0.2cm}\\
h_{0,0020}(\theta) =\frac{1}{\sqrt{\ell\pi}}\left(\widetilde{\mathcal{M}}_0(2i\omega_{2c})\right)^{-1}A_{0020}e^{2i\omega_{2c}\theta},  \vspace{0.2cm}\\
h_{0,1100}(\theta) =\frac{1}{\sqrt{\ell\pi}}\left(\widetilde{\mathcal{M}}_0(0)\right)^{-1}A_{1100}, \vspace{0.2cm}\\
h_{0,0011}(\theta) =\frac{1}{\sqrt{\ell\pi}}\left(\widetilde{\mathcal{M}}_0(0)\right)^{-1}A_{0011},
\vspace{0.2cm}\\
h_{0,1010}(\theta) =\frac{1}{\sqrt{\ell\pi}}\left(\widetilde{\mathcal{M}}_0\left(i\left(\omega_{1c}+\omega_{2c}\right) \right)\right)^{-1}\breve{A}_{1010}e^{i\left(\omega_{1c}+\omega_{2c}\right)\theta},
\vspace{0.2cm}\\
h_{0,1001}(\theta) =\frac{1}{\sqrt{\ell\pi}}\left(\widetilde{\mathcal{M}}_0\left(i\left(\omega_{1c}-\omega_{2c}\right) \right)\right)^{-1}\breve{A}_{1001}e^{i\left(\omega_{1c}-\omega_{2c}\right)\theta},
\vspace{0.2cm}\\
h_{0,0110}(\theta) =\frac{1}{\sqrt{\ell\pi}}\left(\widetilde{\mathcal{M}}_0\left(i\left(\omega_{2c}-\omega_{1c}\right) \right)\right)^{-1}\breve{A}_{0110}e^{i\left(\omega_{2c}-\omega_{1c}\right)\theta},
\end{array}
\right.
\end{equation}
and
\begin{equation}
\label{H2N1N1EN2}
\left\{
\begin{array}{l}
h_{2n_1,2000}(\theta) =\frac{1}{\sqrt{2\ell\pi}}\left(\widetilde{\mathcal{M}}_{2n_1}(2i\omega_{1c})\right)^{-1}\widetilde{A}_{2000}e^{2i\omega_{1c}\theta},
\vspace{0.2cm}\\
h_{2n_1,0020}(\theta) =\frac{1}{\sqrt{2\ell\pi}}\left(\widetilde{\mathcal{M}}_{2n_1}(2i\omega_{2c})\right)^{-1}\widetilde{A}_{0020}e^{2i\omega_{2c}\theta}
\vspace{0.2cm}\\
h_{2n_1,1100}(\theta) =\frac{1}{\sqrt{2\ell\pi}}\left(\widetilde{\mathcal{M}}_{2n_1}(0)\right)^{-1}\widetilde{A}_{1100},
\vspace{0.2cm}\\
h_{2n_1,0011}(\theta) =\frac{1}{\sqrt{2\ell\pi}}
\left(\widetilde{\mathcal{M}}_{2n_1}(0)\right)^{-1}\widetilde{A}_{0011} ,
\vspace{0.2cm}\\
h_{2n_1,1010}(\theta) =\frac{1}{\sqrt{2\ell\pi}}\left(\widetilde{\mathcal{M}}_{2n_1}\left(i\left(\omega_{1c}+\omega_{2c}\right)\right)\right)^{-1}\widehat{A}_{1010}e^{i\left(\omega_{1c}+\omega_{2c}\right)\theta}
\vspace{0.2cm}\\
h_{2n_1,1001}(\theta) =\frac{1}{\sqrt{2\ell\pi}}\left(\widetilde{\mathcal{M}}_{2n_1}\left(i\left(\omega_{1c}-\omega_{2c}\right)\right)\right)^{-1}\widehat{A}_{1001}e^{i\left(\omega_{1c}-\omega_{2c}\right)\theta}
\vspace{0.2cm}\\
h_{2n_1,0110}(\theta) =\frac{1}{\sqrt{2\ell\pi}}\left(\widetilde{\mathcal{M}}_{2n_1}\left(i\left(\omega_{2c}-\omega_{1c}\right)\right)\right)^{-1}\widehat{A}_{0110}e^{i\left(\omega_{2c}-\omega_{1c}\right)\theta}.
\end{array}
\right.
\end{equation}
Notice that for $n_1=n_2$, $2n_1=2n_2=n_1+n_2$ and $n_2-n_1=0$. Thus, substituting \eqref{H0N1EN2} and \eqref{H2N1N1EN2} into \eqref{EIJ}, \eqref{EIJD1} and \eqref{EIJD2}, we can calculate  $E_{ij}$ and $E_{ij}^d$.

 \end{appendices}
\bibliographystyle{elsart-num-sort}
\bibliography{Bifurcation-ReferencesV1}
\end{document}